\newtheorem{theorem}{Theorem}[section]
\newtheorem{proposition}[theorem]{Proposition}
\newtheorem{lemma}[theorem]{Lemma}
\newtheorem{corollary}[theorem]{Corollary}
\theoremstyle{definition}
\newtheorem{definition}[theorem]{Definition}
\newtheorem{example}[theorem]{Example}
\theoremstyle{remark}
\newtheorem{remark}[theorem]{Remark}
\numberwithin{equation}{section}
\newcommand{\abs}[1]{\lvert#1\rvert}
\newcommand{\p}{\partial}
\newcommand{\dslash}{/\mskip-6mu/}
\newcommand{\one}{{{\mathchoice {\rm 1\mskip-4mu l} {\rm 1\mskip-4mu l}
{\rm 1\mskip-4.5mu l} {\rm 1\mskip-5mu l}}}}
\newcommand{\C}{{\mathbb{C}}}
\newcommand{\R}{{\mathbb{R}}}
\newcommand{\T}{{\mathbb{T}}}
\newcommand{\cE}{{\mathcal E}}
\newcommand{\cF}{{\mathcal F}}
\newcommand{\cH}{{\mathcal H}}
\newcommand{\cI}{{\mathcal I}}
\newcommand{\cK}{{\mathcal K}}
\newcommand{\cL}{{\mathcal L}} 
\newcommand{\cM}{{\mathcal M}}
\newcommand{\sC}{\mathscr{C}}
\newcommand{\sG}{\mathscr{G}}    
\newcommand{\sH}{\mathscr{H}}    
\newcommand{\sJ}{\mathscr{J}}    
\newcommand{\sK}{\mathscr{K}}    
\newcommand{\sL}{\mathscr{L}}
\newcommand{\sP}{\mathscr{P}}
\newcommand{\sS}{\mathscr{S}}    
\newcommand{\sT}{\mathscr{T}}    
\newcommand{\sV}{\mathscr{V}}    
\newcommand{\sW}{\mathscr{W}}    
\newcommand{\sY}{\mathscr{Y}}    
\newcommand{\sZ}{\mathscr{Z}}
\newcommand{\bB}{\mathbf{B}}   
\newcommand{\bL}{\mathbf{L}}  
\newcommand{\sn}{{\mathsf{n}}}
\newcommand{\ps}{{\mathsf{ps}}}
\newcommand{\Om}{{\Omega}}
\newcommand{\om}{{\omega}}
\newcommand{\ghat}{{\widehat{g}}}
\newcommand{\hhat}{{\widehat{h}}}
\newcommand{\xhat}{{\widehat{x}}}
\newcommand{\Fhat}{{\widehat{F}}}
\newcommand{\Ghat}{{\widehat{G}}}
\newcommand{\Jhat}{{\widehat{J}}}
\newcommand{\Shat}{{\widehat{S}}}
\newcommand{\lambdahat}{{\widehat{\lambda}}}
\newcommand{\omhat}{{\widehat{\omega}}}
\newcommand{\Thetahat}{{\widehat{\Theta}}}
\newcommand{\Richat}{{\widehat{\Ric}}}
\newcommand{\im}{{\mathrm{im}}}
\newcommand{\trace}{{\mathrm{trace}}} 
\newcommand{\id}{{\mathrm{id}}}
\newcommand{\INT}{{\mathrm{int}}}
\newcommand{\grad}{{\mathrm{grad}}}
\renewcommand{\Im}{{\mathrm{Im}}}  
\newcommand{\Lie}{{\mathrm{Lie}}} 
\newcommand{\Aut}{{\mathrm{Aut}}}   
\newcommand{\Diff}{{\mathrm{Diff}}} 
\newcommand{\Vect}{{\mathrm{Vect}}}  
\newcommand{\Symp}{{\mathrm{Symp}}}   
\newcommand{\Ham}{{\mathrm{Ham}}}
\newcommand{\Flux}{{\mathrm{Flux}}}
\newcommand{\SE}{{\mathrm{SE}}}
\newcommand{\bi}{{\mathbf{i}}}
\newcommand{\ex}{{\mathrm{ex}}}
\renewcommand{\csc}{{\mathrm{csc}}}
\newcommand{\cscK}{{\mathrm{cscK}}}
\newcommand{\WP}{{\mathrm{WP}}}
\newcommand{\rG}{{\mathrm{G}}}
\newcommand{\SU}{{\mathrm{SU}}} 
\newcommand{\SL}{{\mathrm{SL}}}
\newcommand{\fg}{{\mathfrak{g}}} 
\newcommand{\Cinf}{C^{\infty}}
\newcommand{\CP}{{\mathbb{C}\mathrm{P}}}
\newcommand{\KE}{{\mathrm{KE}}}
\newcommand{\Ric}{{\mathrm{Ric}}}
\newcommand{\inner}[2]{\langle #1, #2\rangle}
\newcommand{\INNER}[2]{\left\langle #1, #2\right\rangle}
\newlength{\dtildeheight}
\newlength{\dhatheight}
\def\NABLA#1{{\mathop{\nabla\kern-.5ex\lower1ex\hbox{$#1$}}}}
\def\Nabla#1{{\nabla\kern-.5ex{}_{#1}}}
\def\Tabla#1{{\widetilde\nabla\kern-.5ex{}_{#1}}}
\def\DTabla#1{{{\widetilde{\vphantom{\rule{1pt}{9.6pt}}\smash{\widetilde{\nabla}}}}\kern-.5ex{}_{#1}}}
\def\SDTabla#1{{{\widetilde{\vphantom{\rule{1pt}{6.8pt}}\smash{\widetilde{\nabla}}}}\kern-.5ex{}_{#1}}}
\def\Habla#1{{\widehat\nabla\kern-.5ex{}_{#1}}}
\def\DHabla#1{{{\widehat{\vphantom{\rule{1pt}{9.6pt}}\smash{\widehat{\nabla}}}}\kern-.5ex{}_{#1}}}
\def\SDHabla#1{{{\widehat{\vphantom{\rule{1pt}{6.8pt}}\smash{\widehat{\nabla}}}}\kern-.5ex{}_{#1}}}
\def\abs#1{\mathopen|#1\mathclose|}
\def\Abs#1{\left|#1\right|}
\def\norm#1{\mathopen\|#1\mathclose\|}
\def\Norm#1{\left\|#1\right\|}
\renewcommand{\p}{{\partial}}
\begin{document}

\title[Ricci form and Teichm\"uller spaces]{A moment map interpretation of the Ricci form, \\ 
K\"ahler--Einstein structures, and Teichm\"uller spaces}

%    Information for first author
\author{Oscar Garc\'{\i}a-Prada}
%    Address of record for the research reported here
\address{ICMAT Madrid}
%    Current address
%\curraddr{}
\email{oscar.garcia-prada@icmat.es}
%\thanks will become a 1st page footnote.
%\thanks{The first author was supported in part by NSF Grant \#000000.}

%    Information for second author
\author{Dietmar~Salamon}
\address{ETH Z\"urich}
\email{salamond@math.ethz.ch}
%\thanks{Support information for the second author.}

%    General info
\subjclass{53D20, 53Q20, 53Q25, 14J10}  %Fano: 14J40
\date{18 April 2020}

\dedicatory{This paper is dedicated to the memory of Boris Dubrovin.}

\keywords{moment map, Ricci form, K\"ahler--Einstein, Teichm\"uller space}

\begin{abstract}
This paper surveys the role of moment maps in K\"ahler geometry.
The first section discusses the Ricci form as a moment map 
and then moves on to moment map interpretations
of the K\"ahler--Einstein condition 
and the scalar curvature (Quillen--Fujiki--Donaldson).
The second section examines the ramifications of these results
for various Teich\-m\"uller spaces and their Weil--Petersson
symplectic forms and explains how these arise naturally from
the construction of symplectic quotients.  The third section
discusses a symplectic form introduced by Donaldson on the
space of Fano complex structures.
\end{abstract}

\maketitle

%%%%%%%%%%%%%%%%%%%%%%%%%%%%%%%%
%%%%%%%% Section 1 %%%%%%%%%%%%%
%%%%%%%%%%%%%%%%%%%%%%%%%%%%%%%%

\vspace{-1pt}

\section{The Ricci form}\label{sec:RICCI}

This section explains how the Ricci form appears as a moment map
for the action of the group of exact volume preserving diffeomorphisms 
on the space of almost complex structures.   A direct consequence of this 
observation is the Quillen--Fujiki--Donaldson Theorem about the scalar curvature
as a moment map for the action of the group of Hamiltonian symplectomorphisms
on the space of compatible almost complex structures on a symplectic manifold.
This section also discusses how the K\"ahler--Einstein condition
can be interpreted as a moment map equation.

%%%%%%%%%%%%%%%%%%%%%%%%%%%%%%%%
%%%%%%%% Subsection 1.1 %%%%%%%%
%%%%%%%%%%%%%%%%%%%%%%%%%%%%%%%%

\subsection{The Ricci form as a moment map}\label{subsec:RICCI}

Let~$M$ be a closed oriented $2\sn$-manifold equipped 
with a positive volume form~${\rho\in\Om^{2\sn}(M)}$.   
Then the space~$\sJ(M)$ of all almost complex structures 
on~$M$ that are compatible with the orientation can be thought of 
as an infinite-dimensional symplectic manifold. 
Its tangent space at~${J\in\sJ(M)}$ is  the space of all complex anti-linear 
endomorphisms~${\Jhat:TM\to TM}$ of the tangent bundle
(see~\cite[Section~2]{GPST}) and thus can be identified 
with the space~$\Om^{0,1}_J(M,TM)$ of complex 
anti-linear $1$-forms on~$M$ with values in the tangent bundle.  
The symplectic form~$\Om_\rho$ is given by
\begin{equation}\label{eq:OM}
\Om_{\rho,J}(\Jhat_1,\Jhat_2) 
:= \tfrac{1}{2}\int_M\trace\Bigl(\Jhat_1J\Jhat_2\Bigr)\rho
\end{equation}
for~${J\in\sJ(M)}$ and~${\Jhat_1,\Jhat_2\in T_J\sJ(M)=\Om^{0,1}_J(M,TM)}$.

The symplectic form is preserved by the action of the
group~$\Diff(M,\rho)$ of volume preserving diffeomorphisms.  
Denote the identity component by~${\Diff_0(M,\rho)}$ and the subgroup
of exact volume preserving diffeomorphisms 
(that are isotopic to the identity via an isotopy that is generated 
by a smooth family of exact divergence-free vector fields) 
by~${\Diff^\ex(M,\rho)}$.

Consider the submanifold~${\sJ_0(M)\subset\sJ(M)}$ of all 
almost complex structures that are compatible with the orientation 
and have real first Chern class zero.
It was shown in~\cite{GPST} that the action 
of~$\Diff^\ex(M,\rho)$ on~${\sJ_0(M)}$ is Hamiltonian 
and that twice the Ricci form appears as a moment map.  
To make this precise, note that the Lie algebra of~$\Diff^\ex(M,\rho)$ 
is the space of exact divergence-free vector fields and can be  
identified with the quotient space~${\Om^{2\sn-2}(M)/\ker d}$ via the 
correspondence~${\Om^{2\sn-2}(M)\to\Vect^\ex(M,\rho):\alpha\mapsto Y_\alpha}$,
defined by
$$
\iota(Y_\alpha)\rho = d\alpha.
$$
The dual space of the quotient~${\Om^{2\sn-2}(M)/\ker d}$
can formally be thought of as the space of exact $2$-forms, 
in that every exact $2$-form~${\tau\in d\Om^1(M)}$ gives rise to a continuous linear 
functional~${\Om^{2\sn-2}(M)/\ker d\to\R:[\alpha]\mapsto\int_M\tau\wedge\alpha}$.

The {\bf Ricci form}~${\Ric_{\rho,J}\in\Om^2(M)}$ associated to a 
volume form~$\rho$ and an almost complex structure~$J$,
both inducing the same orientation of~$M$, is defined by
\begin{equation}\label{eq:RICCI}
\begin{split}
\Ric_{\rho,J}(u,v) 
:= \tfrac{1}{2}\trace\bigl(JR^\nabla(u,v)\bigr) 
+ \tfrac{1}{4}\trace\bigl((\Nabla{u}J)J(\Nabla{v}J)\bigr) 
+ \tfrac{1}{2}d\lambda^\nabla_J(u,v)
\end{split}
\end{equation}
for~${u,v\in\Vect(M)}$.  Here~$\nabla$ is a torsion-free connection 
on~$TM$ that preserves the volume form~$\rho$
and the $1$-form~${\lambda^\nabla_J\in\Om^1(M)}$ is defined by
$$
\lambda^\nabla_J(u):=\trace\bigl((\nabla J)u\bigr)
$$
for~${u\in\Vect(M)}$.   The Ricci form is independent of the choice
of the torsion-free $\rho$-connection used to define it and
it is closed and represents the cohomology class~$2\pi c_1^\R(J)$.
Its dependence on the volume form is governed by the identity
\begin{equation}\label{eq:RICf}
\Ric_{e^f\rho,J}=\Ric_{\rho,J}+\tfrac{1}{2}d(df\circ J).
\end{equation}
for~${J\in\sJ(M)}$ and~${f\in\Om^0(M)}$,
and the map~$(\rho,J)\mapsto\Ric_{\rho,J}$ is equivariant 
under the action of the diffeomorphism group, i.e.\
\begin{equation}\label{eq:RICphi}
\Ric_{\phi^*\rho,\phi^*J}=\phi^*\Ric_{\rho,J}
\end{equation}
for all~${J\in\sJ(M)}$ and all~${\phi\in\Diff(M)}$.

The definition of the Ricci form in~\eqref{eq:RICCI} arises 
as a special case of a general moment map identity in~\cite{DON3} 
for sections of certain $\SL(2\sn,\R)$ fiber bundles.  
If~$\rho$ is the volume form of a K\"ahler metric and~$\nabla$
is the Levi-Civita connection, then~${\nabla J=0}$ and
hence the last two terms in~\eqref{eq:RICCI} vanish 
and~$\Ric_{\rho,J}$ is the standard Ricci form.  In general, 
the second summand in~\eqref{eq:RICCI} is a correction term 
which gives rise to a closed $2$-form that represents $2\pi$
times the first Chern class, and the last summand is a further 
correction term that makes the Ricci form independent of the choice
of the torsion-free $\rho$-connection~$\nabla$.   If~$J$ is compatible 
with a symplectic form~$\om$ and~$\nabla$ is the Levi-Civita 
connection of the Riemannian metric~$\om(\cdot,J\cdot)$,
then~${\lambda^\nabla_J=0}$.  In the integrable case the 
$2$-form~$\bi\Ric_{\rho,J}$ is the curvature of the Chern connection 
on the canonical bundle associated to the Hermitian structure 
determined by~$\rho$ and hence is a~$(1,1)$-form.

\begin{theorem}[\cite{GPST}]\label{thm:RICCI}
The action of the group~$\Diff^\ex(M,\rho)$ on the space~$\sJ_0(M)$ 
with the symplectic form~\eqref{eq:OM} is a Hamiltonian group action
and is generated by the~$\Diff(M,\rho)$-equivariant
moment map~${\sJ_0(M)\to d\Om^1(M):J\mapsto2\Ric_{\rho,J}}$, i.e.\ 
\begin{equation}\label{eq:RICCIMOMENT}
\int_M2\Richat_\rho(J,\Jhat)\wedge\alpha = \Om_{\rho,J}(\Jhat,\cL_{Y_\alpha}J)
\end{equation}
for all~${J\in\sJ_0(M)}$, all~${\Jhat\in\Om^{0,1}_J(M,TM)}$
and all~${\alpha\in\Om^{2\sn-2}(M)}$, where
$$
\Richat_\rho(J,\Jhat) := \left.\tfrac{d}{dt}\right|_{t=0}\Ric_{\rho,J_t}
$$
for any smooth path~${\R\to\sJ(M):t\mapsto J_t}$ 
satisfying~${J_0=J}$ and~${\left.\tfrac{d}{dt}\right|_{t=0}J_t=\Jhat}$. 
\end{theorem}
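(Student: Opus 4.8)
The plan is to verify the three things the statement bundles together: that the $\Diff^\ex(M,\rho)$-action preserves $\sJ_0(M)$ and is infinitesimally generated by $[\alpha]\mapsto\cL_{Y_\alpha}J$; that $J\mapsto 2\Ric_{\rho,J}$ is equivariant; and, the one substantive point, that the identity~\eqref{eq:RICCIMOMENT} holds. The first two are formal. Since every $\phi\in\Diff^\ex(M,\rho)$ is isotopic to the identity, $c_1^\R(\phi^*J)=\phi^*c_1^\R(J)=c_1^\R(J)$, so $\sJ_0(M)$ is invariant; differentiating $t\mapsto\phi_t^*J$ along the flow $\phi_t$ of $Y_\alpha$ produces the generating vector field $\cL_{Y_\alpha}J$, and differentiating $(\phi_t^*J)^2=-\Id$ shows that $\cL_{Y_\alpha}J$ anticommutes with $J$, i.e.\ lies in $\Om^{0,1}_J(M,TM)$. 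Equivariance follows at once from~\eqref{eq:RICphi}: for $\phi\in\Diff(M,\rho)$ one has $\phi^*\rho=\rho$, hence $\Ric_{\rho,\phi^*J}=\Ric_{\phi^*\rho,\phi^*J}=\phi^*\Ric_{\rho,J}$, and $\phi^*Y_\alpha=Y_{\phi^*\alpha}$ (because $\iota(\phi^*Y_\alpha)\rho=\iota(\phi^*Y_\alpha)\phi^*\rho=\phi^*(d\alpha)=d(\phi^*\alpha)$), which is exactly the compatibility with the coadjoint action on $\Om^{2\sn-2}(M)/\ker d$.

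For~\eqref{eq:RICCIMOMENT} I would argue directly from~\eqref{eq:RICCI}, fixing once and for all a torsion-free connection $\nabla$ on $TM$ with $\nabla\rho=0$, which is legitimate because~\eqref{eq:RICCI} does not depend on that choice. Along a path $t\mapsto J_t\in\sJ(M)$ with $J_0=J$ and $\Jhat=\left.\tfrac{d}{dt}\right|_{t=0}J_t$, differentiate the three summands of~\eqref{eq:RICCI} separately. The curvature term yields $\tfrac12\trace(\Jhat R^\nabla)$, since $R^\nabla$ does not move; the quadratic term yields the three-term variation of $(u,v)\mapsto\trace\bigl((\Nabla{u}J)J(\Nabla{v}J)\bigr)$ obtained by replacing, in turn, each factor $\nabla J$ by $\nabla\Jhat$ and the central $J$ by $\Jhat$; and the last term yields the exact $2$-form $\tfrac12\,d\lambdahat$, where $\lambdahat(u)=\trace\bigl(v\mapsto(\Nabla{v}\Jhat)u\bigr)$. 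Wedging with $\alpha$ and integrating, this exact term contributes $\tfrac12\int_M\lambdahat(Y_\alpha)\,\rho$ after a single Stokes step, using $\iota(Y_\alpha)\rho=d\alpha$ together with the identity $\beta\wedge\iota(Y)\rho=\beta(Y)\,\rho$ for a $1$-form $\beta$ on the $2\sn$-manifold $M$.

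On the right-hand side I would expand $\cL_{Y_\alpha}J$ through the connection: torsion-freeness gives $\cL_{Y_\alpha}J=\Nabla{Y_\alpha}J-[\nabla Y_\alpha,J]$, where $\nabla Y_\alpha\in\End(TM)$ denotes $v\mapsto\Nabla{v}Y_\alpha$, so that $\tfrac12\int_M\trace(\Jhat J\cL_{Y_\alpha}J)\,\rho$ splits into a $\Nabla{Y_\alpha}J$-piece and a commutator piece. It then remains to integrate the left-hand side by parts---moving one derivative off $\alpha$ via $\iota(Y_\alpha)\rho=d\alpha$ and derivatives off $Y_\alpha$ via $\nabla\rho=0$---and to check that the curvature term, the three-term quadratic variation, and the exact term reassemble pointwise into exactly those two pieces, the differential Bianchi identity for $R^\nabla$ entering at the one place where a derivative must be pulled off the curvature. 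I expect this reassembly---keeping track of signs, of the factors $\tfrac12,\tfrac14$, and of the single appeal to Bianchi---to be the main obstacle; nothing else in the argument is delicate.

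Finally it is worth isolating why the action is restricted to $\Diff^\ex(M,\rho)$: for a general vector field $Y$ the flow need not preserve $\rho$, and the first variation of $\Ric_{\rho,J}$ then acquires an extra contribution governed by~\eqref{eq:RICf}, with the function there determined by $\cL_Y\rho$; requiring $\iota(Y)\rho$ to be exact is exactly what suppresses this term and leaves $2\Ric_{\rho,J}$---rather than a corrected expression---as the genuine moment map, in analogy with the passage from $\Symp$ to $\Ham$ in the Fujiki--Donaldson picture. A more conceptual route to the same identity, which performs the integration by parts automatically, is to invoke Donaldson's general moment map formula~\cite{DON3} for sections of a fiber bundle with fiberwise Hamiltonian structure-group action: apply it to the bundle with fiber $\SL(2\sn,\R)/\SL(\sn,\C)$ associated to the bundle of $\rho$-unimodular frames of $TM$---whose sections are precisely the orientation-compatible almost complex structures---on which $\Diff(M,\rho)$ acts, and identify the resulting curvature term with the expression in~\eqref{eq:RICCI}.
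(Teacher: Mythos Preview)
Your outline is sound, but the substantive computation you defer (``I expect this reassembly\dots to be the main obstacle'') is in fact the whole content, and the paper organizes it differently in a way worth knowing. Rather than differentiate~\eqref{eq:RICCI} term by term and then integrate by parts against~$\alpha$, the paper introduces a $1$-form valued $1$-form~$\Lambda_\rho$ on~$\sJ(M)$, defined by
\[
\bigl(\Lambda_\rho(J,\Jhat)\bigr)(u)
:= \trace\bigl((\nabla\Jhat)u+\tfrac12\Jhat J\Nabla{u}J\bigr),
\]
and proves two separate identities (Proposition~\ref{prop:LAMBDA}): first that~$\Lambda_\rho(J,\Jhat)$ is a primitive, $d\Lambda_\rho(J,\Jhat)=2\Richat_\rho(J,\Jhat)$; and second that for \emph{any} vector field~$v$,
\[
\int_M\Lambda_\rho(J,\Jhat)\wedge\iota(v)\rho
= \tfrac{1}{2}\int_M\trace\bigl(\Jhat J\cL_vJ\bigr)\rho.
\]
With~$v=Y_\alpha$ and a single Stokes step these two give~\eqref{eq:RICCIMOMENT} immediately. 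Your~$\lambdahat$ is the first summand of~$\Lambda_\rho(J,\Jhat)$; the second summand~$\tfrac12\trace(\Jhat J\nabla J)$ is exactly what would emerge from two of the three terms in your variation of the quadratic piece, and recognizing that the full~$2\Richat_\rho$ is $d$-exact with this primitive is the organizing principle that replaces your anticipated ad hoc reassembly and the appeal to Bianchi. Your proposed direct computation would work and would in effect rediscover~$\Lambda_\rho$, but as written it is a plan, not a proof.

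One correction to your final paragraph: you conflate volume preservation with exactness. For any divergence-free~$Y$ one already has~$\cL_Y\rho=0$, so the extra~\eqref{eq:RICf} contribution you worry about is absent throughout~$\Diff_0(M,\rho)$, not just~$\Diff^\ex(M,\rho)$. The reason for restricting to the exact subgroup is instead that its Lie algebra is~$\Om^{2\sn-2}(M)/\ker d$, whose formal dual is the space~$d\Om^1(M)$ of exact $2$-forms---precisely where~$2\Ric_{\rho,J}$ lands once~$c_1^\R(J)=0$. The identity~\eqref{eq:RICCIMOMENT} itself holds for any divergence-free~$Y_\alpha$ and does not use~$c_1^\R=0$ at all, as the paper notes.
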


Theorem~\ref{thm:RICCI} is based on ideas in~\cite{DON3}.
We emphasize that equation~\eqref{eq:RICCIMOMENT} does not 
require the vanishing of the first Chern class. Its proof in~\cite{GPST}
relies on the construction of a $1$-form~$\Lambda_\rho$ 
on~$\sJ(M)$ with values in the space of $1$-forms on~$M$. 
For~${J\in\sJ(M)}$ and~${\Jhat\in\Om^{0,1}_J(M,TM)}$ the 
$1$-form~${\Lambda_\rho(J,\Jhat)\in\Om^1(M)}$ is defined by 
\begin{equation}\label{eq:LAMBDA}
\bigl(\Lambda_\rho(J,\Jhat)\bigr)(u)
:= \trace\bigl((\nabla\Jhat)u+\tfrac12\Jhat J\Nabla{u}J\bigr)
\end{equation}
for~${u\in\Vect(M)}$, where~$\nabla$ is a torsion-free $\rho$-connection.
As before, ${\Lambda_\rho(J,\Jhat)}$ is independent of the 
choice of~$\nabla$.  Moreover, $\Lambda_\rho$ satisfies 
the following identities.  

\begin{proposition}[\cite{GPST}]\label{prop:LAMBDA}
Let~${J\in\sJ(M)}$, ${\Jhat\in\Om^{0,1}_J(M,TM)}$, and~${v\in\Vect(M)}$.  
Denote the divergence of~$v$ by~${f_v:=d\iota(v)\rho/\rho}$.  Then
\begin{eqnarray}
d\bigl(\Lambda_\rho(J,\Jhat)\bigr)
&\!\!\!\!=\!\!\!\!& 
2\Richat_\rho(J,\Jhat),
\label{eq:LAMBDA1}\\  
\int_M\Lambda_\rho(J,\Jhat)\wedge\iota(v)\rho
&\!\!\!\!=\!\!\!\!& 
\tfrac{1}{2}\int_M\trace\bigl(\Jhat J\cL_vJ\bigr)\rho,
\label{eq:LAMBDA2}\\
\Lambda_\rho(J,\cL_vJ)
&\!\!\!\!=\!\!\!\!& 
2\iota(v)\Ric_{\rho,J} - df_v\circ J + df_{Jv}.
\label{eq:LAMBDA3}
\end{eqnarray}
\end{proposition}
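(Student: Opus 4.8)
The plan is to verify the three identities \eqref{eq:LAMBDA1}--\eqref{eq:LAMBDA3} by direct computation from the definition \eqref{eq:LAMBDA} of $\Lambda_\rho$, choosing a convenient torsion-free $\rho$-connection $\nabla$ (one is free to do so, since $\Lambda_\rho$ is independent of that choice) and exploiting the algebraic identities $J^2=-\Id$, $J\Jhat+\Jhat J=0$, and $J(\Nabla{u}J)+(\Nabla{u}J)J=0$ that follow from differentiating $J^2=-\Id$ along $J_t$ and along $\nabla$. For \eqref{eq:LAMBDA1}, I would differentiate the formula \eqref{eq:RICCI} for $\Ric_{\rho,J}$ along a path $t\mapsto J_t$ with $\dot J_0=\Jhat$: the first term contributes $\tfrac12\trace(\Jhat R^\nabla(u,v))$, the second term contributes $\tfrac14\trace\bigl((\Nabla{u}\Jhat)J(\Nabla{v}J)+(\Nabla{u}J)\Jhat(\Nabla{v}J)+(\Nabla{u}J)J(\Nabla{v}\Jhat)\bigr)$, and the third contributes $\tfrac12 d\dot\lambda^\nabla_J$. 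The goal is to recognize this sum as $d$ applied to the $1$-form $u\mapsto\trace\bigl((\nabla\Jhat)u+\tfrac12\Jhat J\Nabla{u}J\bigr)$; expanding $d$ of that $1$-form using torsion-freeness introduces exactly a curvature term $\trace(\Jhat R^\nabla(u,v))$ together with symmetrized covariant-derivative terms, and the trace identities above are used to collapse the cross terms. This is the cleanest of the three and I expect it to go through by bookkeeping.

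For \eqref{eq:LAMBDA2} I would integrate $\Lambda_\rho(J,\Jhat)\wedge\iota(v)\rho$ over $M$. Writing $\bigl(\Lambda_\rho(J,\Jhat)\bigr)(u)=\trace\bigl((\Nabla{u}\Jhat)\bigr)+\tfrac12\trace\bigl(\Jhat J\Nabla{u}J\bigr)$ and using $\alpha\wedge\iota(v)\rho=\alpha(v)\rho$ for a $1$-form $\alpha$, the integrand becomes $\bigl(\trace(\Nabla{v}\Jhat)+\tfrac12\trace(\Jhat J\Nabla{v}J)\bigr)\rho$. The term $\trace(\Nabla{v}\Jhat)\,\rho$ should be handled by recognizing it as (up to the divergence of an appropriate vector field, which integrates to zero since $\nabla$ preserves $\rho$) a total derivative; here one uses that $\trace(\Nabla{v}\Jhat)=v\cdot\trace\Jhat$ type manipulations interact with $\DIV$, together with $\trace\Jhat=0$ pointwise fails in general, so instead one integrates by parts moving $\nabla$ off $\Jhat$. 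Meanwhile $\cL_vJ=\Nabla{v}J-\Nabla{(\cdot)}v\circ J+J\circ\Nabla{(\cdot)}v$ (torsion-free), so $\tfrac12\trace(\Jhat J\cL_vJ)=\tfrac12\trace(\Jhat J\Nabla{v}J)+\tfrac12\trace\bigl(\Jhat J(J\nabla v-(\nabla v)J)\bigr)$; the extra piece should, after an integration by parts and use of $\DIV$-freeness of the exact part, cancel against what remains from the $\trace(\Nabla{v}\Jhat)$ term. The main obstacle is precisely this bookkeeping of integration-by-parts terms and the divergence corrections.

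For \eqref{eq:LAMBDA3} I would specialize \eqref{eq:LAMBDA} to $\Jhat=\cL_vJ$. First expand $\cL_vJ$ in terms of $\nabla$ as above. Then the formula $\Ric_{\rho,J}$ \eqref{eq:RICCI} together with its equivariance \eqref{eq:RICphi} — differentiated along the flow of $v$, giving $\cL_v\Ric_{\rho,J}=\Richat_\rho(J,\cL_vJ)+$ (a term involving the divergence $f_v$ coming from $\cL_v\rho=f_v\rho$ and the $\rho$-dependence \eqref{eq:RICf}) — furnishes a useful intermediate identity. Combined with \eqref{eq:LAMBDA1}, which says $d\Lambda_\rho(J,\cL_vJ)=2\Richat_\rho(J,\cL_vJ)$, and Cartan's formula $\cL_v=d\iota(v)+\iota(v)d$ applied to the closed form $\Ric_{\rho,J}$, one gets that $\Lambda_\rho(J,\cL_vJ)-2\iota(v)\Ric_{\rho,J}$ is closed; the point is then to pin down this closed $1$-form as $-df_v\circ J+df_{Jv}$ by a local computation at a point, again invoking the dependence identity \eqref{eq:RICf} (which produces precisely $\tfrac12 d(df\circ J)$-type terms) to track the divergence contributions. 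I expect the hardest part of the whole proposition to be \eqref{eq:LAMBDA3}: one must carefully separate the genuinely Lie-derivative part from the $\rho$-variation part and match the two $df\circ J$ corrections with the correct signs, and the non-integrable terms $\nabla J\neq 0$ must be shown to drop out by the anticommutation relations rather than by vanishing.
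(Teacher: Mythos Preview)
The paper does not actually prove Proposition~\ref{prop:LAMBDA}; it simply cites~\cite[Theorems~2.6 \&~2.7]{GPST}. So there is no in-paper argument to compare against, and your sketch is being measured against a computation that lives in the companion paper.

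That said, your proposal contains a concrete misreading of the definition~\eqref{eq:LAMBDA} that would derail the calculations. You write
\[
\bigl(\Lambda_\rho(J,\Jhat)\bigr)(u)=\trace\bigl(\Nabla{u}\Jhat\bigr)+\tfrac12\trace\bigl(\Jhat J\Nabla{u}J\bigr),
\]
interpreting $(\nabla\Jhat)u$ as the endomorphism $\Nabla{u}\Jhat$. But that reading makes the term identically zero: since $\Jhat J+J\Jhat=0$ one has $\trace\Jhat=0$, hence $\trace(\Nabla{u}\Jhat)=u\!\cdot\!\trace\Jhat=0$. (Your aside that ``$\trace\Jhat=0$ pointwise fails in general'' is therefore also incorrect.) The same reasoning applied to $\lambda^\nabla_J(u)=\trace((\nabla J)u)$ would force $\lambda^\nabla_J\equiv 0$, contradicting the paper's remark that this term is precisely the $\nabla$-dependent correction needed to make $\Ric_{\rho,J}$ independent of the connection. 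The intended meaning of $(\nabla\Jhat)u$ is the endomorphism $w\mapsto(\Nabla{w}\Jhat)u$, so that $\trace((\nabla\Jhat)u)$ is a divergence-type contraction $\sum_i e_i^*\bigl((\Nabla{e_i}\Jhat)u\bigr)$. With this correction the first term is genuinely nontrivial, and your integration-by-parts plan for~\eqref{eq:LAMBDA2} has to move the derivative in the \emph{covariant} slot of $\nabla\Jhat$, not in the argument~$u$; this is what produces the $\cL_vJ$ on the right via the torsion-free identity $\cL_vJ=\Nabla{v}J-[\nabla v,J]$.

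Your strategy for~\eqref{eq:LAMBDA3} via equivariance is sound up to the point where you conclude that $\Lambda_\rho(J,\cL_vJ)-2\iota(v)\Ric_{\rho,J}+df_v\circ J$ is closed; but closedness alone cannot pin it down to the specific exact form $df_{Jv}$, so the ``local computation at a point'' you allude to is not a finishing touch but the actual content of the identity, and it again requires the correct reading of $(\nabla\Jhat)u$ above.
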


For a proof of Proposition~\ref{prop:LAMBDA} see~\cite[Theorems~2.6 \& 2.7]{GPST},
and note that equation~\eqref{eq:RICCIMOMENT} in Theorem~\ref{thm:RICCI} 
follows directly from~\eqref{eq:LAMBDA1} and~\eqref{eq:LAMBDA2} 
with~${v=Y_\alpha}$.

\begin{remark}\label{rmk:LAMBDA}
Two useful equations (see~\cite[Lemma~2.12]{GPST}) are
\begin{equation}\label{eq:LAMBDA4}
\cL_XJ=2J\bar\p_JX,\qquad
\Lambda_\rho(J,\Jhat) = \iota(2J\bar\p_J^*\Jhat^*)\om
\end{equation}
for~${J\in\sJ(M)}$, ${\Jhat\in\Om^{0,1}_J(M,TM)}$, ${X\in\Vect(M)}$.
Here~$\om$ is a nondegenerate $2$-form on~$M$ such that~${\om^\sn/\sn! = \rho}$
and~${\inner{\cdot}{\cdot}:=\om(\cdot,J\cdot)}$ is a Riemannian metric.
\end{remark}

For any Hamiltonian group action the zero set of the moment map 
is invariant under the group action and its orbit space is called 
the Marsden--Weinstein quotient.  In the case at hand this quotient 
is the space of exact volume preserving isotopy classes 
of Ricci-flat almost complex structures given by
\begin{equation}\label{eq:WZERO}
\begin{split}
\sW_0(M,\rho) 
&:= 
\sJ_0(M,\rho)/\Diff^\ex(M,\rho),\\
\sJ_0(M,\rho) 
&:= 
\left\{J\in\sJ_0(M)\,|\,\Ric_{\rho,J}=0\right\}.
\end{split}
\end{equation}
In the finite-dimensional setting it follows directly from the definitions
that an element of the zero set of the moment map is a regular point 
for the moment map (i.e.\ its derivative is surjective) if and only if
the isotropy subgroup is discrete. It was shown in~\cite[Theorem~2.11]{GPST}
that this carries over to the present situation.

\begin{proposition}[\cite{GPST}]\label{prop:REGULAR}
Fix an element~${J\in\sJ(M)}$. 

\smallskip\noindent{\bf (i)}
Let~${\lambdahat\in\Om^1(M)}$. Then there exists a~${\Jhat\in\Om^{0,1}_J(M,TM)}$ 
such that~${\Richat_\rho(J,\Jhat)=d\lambdahat}$ if and only if~${\int_M d\lambdahat\wedge\alpha=0}$
for all~${\alpha\in\Om^{2\sn-2}(M)}$ with~${\cL_{Y_\alpha}J=0}$.

\smallskip\noindent{\bf (ii)}
Let~${\Jhat\in\Om^{0,1}_J(M,TM)}$. Then there exists a~${\alpha\in\Om^{2\sn-2}(M)}$ 
such that~${\cL_{Y_\alpha}J=\Jhat}$ if and only if~${\Om_{\rho,J}(\Jhat,\Jhat')=0}$ for 
all~${\Jhat'\in\Om^{0,1}_J(M,TM)}$ with~${\Richat_\rho(J,\Jhat')=0}$.
\end{proposition}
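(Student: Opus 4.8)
The plan is to transfer the elementary finite-dimensional argument to the present setting --- for a Hamiltonian action, the image of the infinitesimal action at a point is the symplectic-orthogonal complement of the kernel of the moment map's differential, and dually --- and to supply the two analytic ingredients it needs: closed range of the operators involved, and strong nondegeneracy of the relevant pairings in suitable Sobolev completions. Write $L_J\colon\Om^{2\sn-2}(M)/\ker d\to\Om^{0,1}_J(M,TM)$, $[\alpha]\mapsto\cL_{Y_\alpha}J$, for the infinitesimal action (it depends on $\alpha$ only through $[\alpha]$) and $D_J\colon\Om^{0,1}_J(M,TM)\to d\Om^1(M)$, $\Jhat\mapsto2\Richat_\rho(J,\Jhat)$, for the differential of the moment map, which indeed takes values in $d\Om^1(M)$ by~\eqref{eq:LAMBDA1}. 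Since~\eqref{eq:RICCIMOMENT} is deduced from~\eqref{eq:LAMBDA1} and~\eqref{eq:LAMBDA2}, it holds for every $J\in\sJ(M)$, and it says precisely that $L_J$ and $D_J$ are formal adjoints with respect to $\Om_{\rho,J}$ and the wedge pairing $(d\lambda,[\alpha])\mapsto\int_Md\lambda\wedge\alpha$: one has $\int_MD_J\Jhat\wedge\alpha=\Om_{\rho,J}(\Jhat,L_J[\alpha])$ for all $\Jhat$ and $\alpha$. Because the fibrewise wedge pairing $\Lambda^2T^*M\times\Lambda^{2\sn-2}T^*M\to\Lambda^{2\sn}T^*M$ is perfect, a closed $2$-form pairing to zero against all of $\Om^{2\sn-2}(M)$ vanishes, so $\ker D_J=(\im L_J)^\perp$, the $\Om_{\rho,J}$-orthogonal complement. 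The ``only if'' implications follow at once: if $\Jhat=\cL_{Y_\alpha}J$ and $\Richat_\rho(J,\Jhat')=0$, then $\Om_{\rho,J}(\Jhat,\Jhat')=-\Om_{\rho,J}(\Jhat',\cL_{Y_\alpha}J)=-\int_M2\Richat_\rho(J,\Jhat')\wedge\alpha=0$; and if $\Richat_\rho(J,\Jhat)=d\lambdahat$ and $\cL_{Y_\alpha}J=0$, then $\int_Md\lambdahat\wedge\alpha=\tfrac12\Om_{\rho,J}(\Jhat,\cL_{Y_\alpha}J)=0$. What remains is the converse: $\im L_J\supseteq(\ker D_J)^\perp$ for~(ii), and $\im D_J$ contains the wedge-annihilator of $\ker L_J$ for~(i).

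For closed range, the first identity in~\eqref{eq:LAMBDA4} gives $\cL_XJ=2J\bar\p_JX$, so $L_J$ is the composition of the isomorphism $\Om^{2\sn-2}(M)/\ker d\to\Vect^\ex(M,\rho)$, $[\alpha]\mapsto Y_\alpha$, with the operator $X\mapsto2J\bar\p_JX$ on $\Vect(M)$; the latter has injective principal symbol, hence is overdetermined elliptic, with finite-dimensional kernel $\ker\bar\p_J$ and closed range. Since $X\mapsto\iota(X)\rho$ identifies $\Vect^\ex(M,\rho)$ with the closed subspace $d\Om^{2\sn-2}(M)$ of $\Om^{2\sn-1}(M)$, the exact divergence-free vector fields form a closed subspace of $\Vect(M)$; passing to Sobolev completions and combining the elliptic estimate for $\bar\p_J$, Rellich compactness, and the $L^2$-closedness of both $\Vect^\ex(M,\rho)$ and $\ker\bar\p_J$ shows that $L_J$ is bounded below modulo its finite-dimensional kernel, hence has closed range, and ellipticity of $\bar\p_J$ returns the smooth statement that a smooth section lies in $\im L_J$ as soon as it lies in the $L^2$-closure $\overline{\im L_J}$. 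By the closed range theorem, $\im D_J$ is likewise closed in $d\Om^1(M)$.

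To pass from closed range to the biannihilator identities one must cope with $\Om_{\rho,J}$ being only weakly nondegenerate on smooth sections; but this is mild, since its fibrewise model $(\Jhat_1,\Jhat_2)\mapsto\tfrac12\trace(\Jhat_1J\Jhat_2)$ on $J$-anti-linear endomorphisms of $T_xM$ is nondegenerate, so $\Om_{\rho,J}$ extends to a strong symplectic form on the $L^2$-completion of $\Om^{0,1}_J(M,TM)$. Hence the bipolar theorem gives $(E^\perp)^\perp=\overline{E}$ for every subspace $E$; applied to $E=\overline{\im L_J}$, together with $\ker D_J=(\im L_J)^\perp$ and the regularity statement above, this yields~(ii): a smooth $\Jhat$ that is $\Om_{\rho,J}$-orthogonal to every smooth element of $\ker D_J$ lies in $\overline{\im L_J}$, hence in $\im L_J$ --- the needed density of smooth sections in the relevant $L^2$-closed subspaces again following from elliptic regularity for $\bar\p_J$. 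Part~(i) is the dual statement: $\ker L_J$ is, through $[\alpha]\mapsto Y_\alpha$, the finite-dimensional space $\Vect^\ex(M,\rho)\cap\ker\bar\p_J$, so its wedge-annihilator has finite codimension in $d\Om^1(M)$; and since $\im D_J$ is closed, contained in that annihilator by the easy direction, and --- by the Banach closed range theorem applied to the Sobolev realization of $L_J$, whose adjoint, under the isomorphisms provided by $\Om_{\rho,J}$ and the wedge pairing, is $D_J$ --- of matching codimension, it equals the annihilator; elliptic regularity for $\bar\p_J$ and $\bar\p_J^*$ (the latter appearing through the second identity in~\eqref{eq:LAMBDA4}) keeps the conclusion in the smooth category.

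I expect the main obstacle to be precisely this analytic bookkeeping. Neither $L_J$ nor $D_J$ is itself elliptic: they are built from the elliptic operators $\bar\p_J$ and $\bar\p_J^*$ subject to the constraints ``exact'' and ``divergence-free'' on the source side --- equivalently, the quotient $\Om^{2\sn-2}(M)/\ker d$ --- and ``closed/exact'' on the target side $d\Om^1(M)$, and the ambient symplectic form is only weakly nondegenerate on smooth sections, so neither the finite-dimensional argument nor a single off-the-shelf Fredholm theorem applies verbatim. The real work lies in making the Rellich compactness argument respect these constraints and in checking that the detour through Sobolev completions --- needed both for the closed-range statement and for the identification of adjoints --- does not leave the space of smooth sections. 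This is the content of~\cite[Theorem~2.11]{GPST}.
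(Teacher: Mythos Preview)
Your approach is correct and is essentially the one taken in~\cite[Theorem~2.11]{GPST}, which the paper cites without reproducing the proof: the moment map identity~\eqref{eq:RICCIMOMENT} exhibits $D_J$ and $L_J$ as formal adjoints, and the substance lies in upgrading this to genuine closed-range statements via the elliptic theory for~$\bar\p_J$ and~$\bar\p_J^*$ (through~\eqref{eq:LAMBDA4}), exactly as you outline. Your honest accounting of where the analytic work sits --- neither operator is elliptic on the nose, the pairing is only weakly nondegenerate on smooth sections, and one must check that the Sobolev detour returns smooth solutions --- matches the content of that reference.
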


Call an almost complex structure~${J\in\sJ(M)}$ {\bf regular} if 
there is no nonzero exact divergence-free $J$-holomorphic vector field.
The set of regular almost complex structures is open and 
the next proposition gives a regularity criterion.
It shows that every K\"ahlerable complex structure 
with real first Chern class zero is regular.

\begin{proposition}[\cite{GPST}]\label{prop:ISOTROPY}
Assume that~${J\in\sJ_0(M)}$ satisfies~${\Ric_{\rho,J}=0}$
and is compatible with a 
symplectic form~${\om\in\Om^2(M)}$ such that~${\om^\sn/\sn!=\rho}$
and the homomorphism~$H^1(M;\R)\to H^{2\sn-1}(M,\R):
[\lambda]\mapsto[\lambda\wedge\om^{\sn-1}/(\sn-1)!]$
is bijective. Then~${\cL_{Y_\alpha}J=0}$ implies~${Y_\alpha=0}$
for every~${\alpha\in\Om^{2\sn-2}(M)}$.
\end{proposition}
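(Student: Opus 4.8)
The plan is to reduce the statement to a single holomorphic divergence-free vector field, use the moment map identity~\eqref{eq:LAMBDA3} to see that its image under $J$ is divergence-free as well, and then finish with Hodge theory and the hard Lefschetz hypothesis. Set $X:=Y_\alpha$, so that $\iota(X)\rho=d\alpha$ is exact, $X$ is divergence-free, and $\cL_XJ=0$ by assumption; the goal is $X=0$, i.e.\ $d\alpha=0$. First I would show that $JX$ is also divergence-free. Apply~\eqref{eq:LAMBDA3} with $v=X$: by~\eqref{eq:LAMBDA} the map $\Lambda_\rho(J,\cdot)$ is linear, so $\cL_XJ=0$ kills the left-hand side; $\Ric_{\rho,J}=0$ by hypothesis; and $f_X=d\iota(X)\rho/\rho=0$ since $\iota(X)\rho$ is closed. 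Hence $df_{JX}=0$, so $f_{JX}$ is constant, and $\int_Mf_{JX}\,\rho=\int_Md\iota(JX)\rho=0$ by Stokes, so $f_{JX}=0$; that is, $d\iota(JX)\rho=0$.

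Next I would translate into Hodge theory using the Riemannian metric $g:=\om(\cdot,J\cdot)$, whose volume form is $\om^\sn/\sn!=\rho$; write $\ast,d^\ast,\Delta$ for the associated operators and $\xi^\flat:=g(\xi,\cdot)$. For compatible triples one has $\iota(\xi)\rho=\ast(\xi^\flat)$ and $\iota(\xi)\om=(J\xi)^\flat$ for $\xi\in\Vect(M)$. Consequently: (a) $\ast(X^\flat)=\iota(X)\rho=d\alpha$, so $X^\flat$ is co-exact, hence $L^2$-orthogonal to every closed $1$-form; (b) by the previous step $\iota(JX)\rho=\ast\bigl((JX)^\flat\bigr)$ is closed, so $(JX)^\flat=\iota(X)\om$ is co-closed; and (c) since $\iota(X)\rho=\iota(X)\om\wedge\om^{\sn-1}/(\sn-1)!=(JX)^\flat\wedge\om^{\sn-1}/(\sn-1)!$, the $(2\sn-1)$-form $(JX)^\flat\wedge\om^{\sn-1}/(\sn-1)!$ is exact.

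The crux is to prove that $(JX)^\flat=\iota(X)\om$ is also \emph{closed}, equivalently that $X$ is a symplectic vector field, $\cL_X\om=0$; since $\cL_XJ=0$ gives $\cL_Xg=(\cL_X\om)(\cdot,J\cdot)$, this is the same as $X$ being Killing. In the K\"ahler case ($\nabla J=0$) this is the classical Bochner result that a holomorphic vector field on a compact Ricci-flat K\"ahler manifold is parallel, with the curvature term in the integration by parts equal to $\Ric_{\rho,J}=0$. In general I would establish a Weitzenb\"ock identity for the operator $v\mapsto\cL_vJ$ on the compact almost-K\"ahler manifold $(M,\om,J)$ in which the non-negative terms are $L^2$-norms of covariant derivatives of $X$ and the remaining term, after absorbing the correction terms already present in~\eqref{eq:RICCI}, is governed by $\Ric_{\rho,J}$; its vanishing then forces $X$ to be parallel for the canonical Hermitian connection, whence $\cL_X\om=0$. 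I expect this Weitzenb\"ock computation — reconciling the correction terms of~\eqref{eq:RICCI} with the standard Bochner formula — to be the main obstacle, and it is the only step that uses the symplectic structure and $\Ric_{\rho,J}=0$ together. (That the symplectic hypothesis cannot simply be dropped is visible on the nearly K\"ahler $S^6$, where one checks $\Ric_{\rho,J}=0$ and $H^1=0$ while there are nonzero holomorphic Killing fields, but no compatible symplectic form exists.)

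Granting this, $(JX)^\flat$ is both closed and co-closed, hence harmonic, so it represents a class in $H^1(M;\R)$; by (c) its image under the homomorphism $[\lambda]\mapsto[\lambda\wedge\om^{\sn-1}/(\sn-1)!]$ is $[\iota(X)\rho]=[d\alpha]=0$ in $H^{2\sn-1}(M;\R)$. By the injectivity half of the bijectivity hypothesis, $[(JX)^\flat]=0$, so the harmonic form $(JX)^\flat$ is exact and therefore vanishes. Hence $JX=0$, and thus $X=Y_\alpha=0$, as claimed.
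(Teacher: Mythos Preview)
Your overall strategy coincides with the paper's: show that $\iota(Y_\alpha)\om$ is harmonic, note that its image under the Lefschetz map is $[d\alpha]=0$, and invoke injectivity. The paper compresses this into two lines, delegating the harmonicity entirely to \cite[Lemma~3.9(ii)]{GPST}. Your derivation of co-closedness from~\eqref{eq:LAMBDA3} --- obtaining $f_{JX}=0$ and hence $d^*\iota(X)\om=0$ --- is correct and is essentially one half of what that external lemma provides.

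The gap is the \emph{closedness} of $\iota(X)\om$, equivalently $\cL_X\om=0$. You correctly isolate this as the crux and propose a Bochner/Weitzenb\"ock argument, but do not carry it out and explicitly flag it as the ``main obstacle.'' In the integrable case this is indeed the classical Bochner theorem (holomorphic vector fields on a compact Ricci-flat K\"ahler manifold are parallel, hence Killing). In the almost-K\"ahler generality of the proposition, however, the Nijenhuis corrections enter and the computation is genuinely delicate; that is precisely the content of the cited Lemma~3.9. So your proposal is structurally identical to the paper's proof, but the step you leave open is the one that carries all the analytic weight. The aside on $S^6$ is suggestive but not a clean counterexample as stated: you would need to verify that $\Ric_{\rho,J}=0$ in the sense of~\eqref{eq:RICCI} for some volume form, and since that $2$-form need not be of type $(1,1)$ in the non-integrable case, exactness alone (from $H^2(S^6)=0$) does not let you kill it via~\eqref{eq:RICf}.
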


\begin{proof}
Assume~${\cL_{Y_\alpha}J=0}$.  
Then~${\iota(Y_\alpha)\om}$ is harmonic
by~\cite[Lemma~3.9(ii)]{GPST} and is exact 
because~$\iota(Y_\alpha)\om\wedge\om^{\sn-1}/(\sn-1)!=d\alpha$.
Thus~$Y_\alpha=0$.
\end{proof}

By part~(i) of Proposition~\ref{prop:REGULAR} an 
almost complex structure~${J\in\sJ_0(M)}$ is regular if and only if the linear 
map~${\Om^{0,1}_J(M,TM)\to d\Om^1(M):\Jhat\mapsto\Richat_\rho(J,\Jhat)}$
is surjective.  By the implicit function theorem 
in appropriate Sobolev completions this implies that the 
regular part of~$\sJ_0(M,\rho)$ is a 
co-isotropic submanifold of~$\sJ_0(M)$ whose 
isotropic fibers are the $\Diff^\ex(M,\rho)$-orbits.  
One would like to deduce that the regular part 
of~$\sW_0(M,\rho)$ is a symplectic orbifold
with the tangent spaces
$$
T_{[J]_\rho}\sW_0(M,\rho) 
= \frac{\left\{\Jhat\in\Om^{0,1}_J(M,TM)\,|\,\Richat_\rho(J,\Jhat)=0\right\}}
{\left\{\cL_{Y_\alpha}J\,|\,\alpha\in\Om^{2\sn-2}(M)\right\}}
$$
at regular elements~${J\in\sJ_0(M,\rho)}$.
Indeed, the $2$-form~\eqref{eq:OM} is nondegenerate 
on this quotient by part~(ii) of Proposition~\ref{prop:REGULAR}.
However, the action of~${\Diff^\ex(M,\rho)}$ on~$\sJ_0(M,\rho)$ is not always 
proper and the quotient~$\sW_0(M,\rho)$ need not be Hausdorff.  
The archetypal example is the K3-surface~\cite{GHS,VERBITSKY1}.

\begin{example}\label{ex:K3}
Let~$(M,J)$ be a K3-surface that admits 
an embedded holomorphic sphere~$C$ 
with self-intersection number~${C\cdot C=-2}$, 
and let~${\tau:M\to M}$ be the Dehn twist about~$C$.   
Then there exists a smooth family of complex 
structures~$\{J_t\}_{t\in\C}$ and a smooth family 
of diffeomorphisms~${\{\phi_t\in\Diff_0(M)\}_{t\in\C\setminus\{0\}}}$
such that~${J_0=J}$ and~${\phi_t^*J_t=\tau^*J_{-t}}$ for~${t\ne0}$.
Thus the complex structures~$J_t$ and~$\tau^*J_{-t}$ 
represent the same equivalence class in~$\sW_0(M)$, 
however, their limits~${\lim_{t\to0}J_t=J_0}$
and~${\lim_{t\to0}\tau^*J_{-t}=\tau^*J_0}$ do not represent the 
same class in~$\sW_0(M)$ because the homology class~$[C]$ 
belongs to the effective cone of~$J_0$ while the class~$-[C]$ 
belongs to the effective cone of~$\tau^*J_0$.  This shows that
the action of~$\Diff_0(M)$ on~$\sJ_0(M)$ is not proper
and neither is the action of~${\Diff_0(M,\rho)=\Diff^\ex(M,\rho)}$ 
on~$\sJ_0(M,\rho)$.
\end{example}

%%%%%%%%%%%%%%%%%%%%%%%%%%%%%%%%
%%%%%%%% Subsection 1.2 %%%%%%%%
%%%%%%%%%%%%%%%%%%%%%%%%%%%%%%%%

\subsection{Symplectic Einstein structures}\label{subsec:SE}
Let~$M$ be a closed oriented $2\sn$-mani\-fold and fix a 
nonzero real number~$\hbar$.
A {\bf tame symplectic Einstein structure} on~$M$ is a pair~$(\om,J)$
consisting of a symplectic form~${\om\in\Om^2(M)}$ and 
an almost complex structure~$J$ tamed by~$\om$
(i.e.~$\om(v,Jv)>0$ for~${0\ne v\in TM}$) such that
\begin{equation}\label{eq:SE}
\Ric_{\rho,J} = \om/\hbar,\qquad \rho := \om^\sn/\sn!.
\end{equation}
Every such structure satisfies~${2\pi\hbar c_1^\R(\om)=[\om]}$.   
In the case~${\dim(M)=4}$ the integral first Chern class 
of a symplectic form~$\om$ depends only on the cohomology 
class of~$\om$ (see~\cite[Proposition~13.3.11]{MS}),
while in higher dimensions this is an open question.

The purpose of this subsection is to exhibit the space of equivalence 
classes of tame symplectic Einstein structures in a fixed cohomology 
class~${[\om]=a\in H^2(M;\R)}$ and with a fixed volume 
form~${\om^\sn/\sn!=\rho}$, modulo the action of the group
of exact volume preserving diffeomorphisms, 
as a symplectic quotient.

A pair~$(a,\rho)$ consisting of a cohomology class~${a\in H^2(M;\R)}$
and a positive volume form~$\rho$ is called a {\bf Lefschetz pair} 
if it satisfies the following conditions.

\smallskip\noindent{\bf (V)}
{\it ${V:=\inner{a^\sn/\sn!}{[M]}>0}$ and~${\int_M\rho=V}$.}

\smallskip\noindent{\bf (L)}
{\it The homomorphism~${H^1(M;\R)\to H^{2\sn-1}(M;\R): b\mapsto a^{\sn-1}\cup b}$
is bijective.} 

\smallskip\noindent
Fix a Lefschetz pair~$(a,\rho)$. Then the space
$$
\sS_{a,\rho} := \left\{\om\in\Om^2(M)\,\big|\,d\om=0,\,[\om]=a,\,\om^\sn/\sn!=\rho\right\}
$$
of symplectic forms on~$M$ in the cohomology class~$a$
with volume form~$\rho$ is an infinite-dimensional manifold 
whose tangent space at~${\om\in\sS_{a,\rho}}$ is given by
$$
T_\om\sS_{a,\rho} 
= \left\{\omhat\in d\Om^1(M)\,\big|\,
\omhat\wedge\om^{\sn-1}=0\right\}.
$$
The proof uses the fact that 
the map~${\sS_a\to\sV_a:\om\mapsto\om^\sn/\sn!}$
from the space~$\sS_a$ of symplectic forms in the class~$a$
to the space~$\sV_a$ of volume forms with total volume~$V$ 
is a submersion. (It is surjective by Moser isotopy 
whenever~${\sS_a\ne\emptyset}$.)
It was shown by Trautwein~\cite[Lemma~6.4.2]{TRAUTWEIN} 
that~$\sS_{a,\rho}$ carries a symplectic structure 
\begin{equation}\label{eq:OMS}
\Om_\om(\omhat_1,\omhat_2) 
:= \int_M\lambdahat_1\wedge\lambdahat_2
\wedge\frac{\om^{\sn-1}}{(\sn-1)!}
\end{equation}
for~$\om\in\sS_{a,\rho}$ and~${\omhat_1,\omhat_2\in T_\om\sS_{a,\rho}}$, 
where~${\lambdahat_i\in\Om^1(M)}$ is chosen such that
\begin{equation}\label{eq:lambdahat}
d\lambdahat_i=\omhat_i,\qquad
\lambdahat_i\wedge\om^{\sn-1}\in  d\Om^{2\sn-2}(M).
\end{equation}
Here the existence of~$\lambdahat_i$ 
and the nondegeneracy of~\eqref{eq:OMS} 
both require the Lefschetz condition~(L).
In the rational case a result of Fine~\cite{FINE} shows
that~$\sS_{a,\rho}$ is a symplectic quotient via the action 
of the group of gauge transformations on the space of connections 
with symplectic curvature on a suitable line bundle.

Now consider the space
$$
\sP(M,a,\rho) := \left\{(\om,J)\in\sS_{a,\rho}\times\sJ(M)\,\big|\,
\om(v,Jv)>0\mbox{ for all }0\ne v\in TM\right\}
$$
of all pairs~$(\om,J)$ consisting of a symplectic form~$\om$
in the class~$a$ with volume form~$\rho$ and an $\om$-tame 
almost complex structure~$J$.   This is an open subset 
of the product space~${\sS_{a,\rho}\times\sJ(M)}$,
and the symplectic forms~\eqref{eq:OMS} on~$\sS_{a,\rho}$ 
and~\eqref{eq:OM}~on~$\sJ(M)$ together determine 
a natural product symplectic structure on~$\sP(M,a,\rho)$,
given by 
\begin{equation}\label{eq:OMSE}
\Om_{\om,J}\bigl((\omhat_1,\Jhat_1),(\omhat_2,\Jhat_2)\bigr)
:= \tfrac{1}{2}\int_M\trace\bigl(\Jhat_1J\Jhat_2\bigr)\frac{\om^\sn}{\sn!}
- \tfrac{2}{\hbar}\int_M\lambdahat_1\wedge\lambdahat_2\wedge\frac{\om^{\sn-1}}{(\sn-1)!}
\end{equation}
for~${(\om,J)\in\sP(M,a,\rho)}$ 
and~${(\omhat_i,\Jhat_i)\in T_{(\om,J)}\sP(M,a,\rho)=T_\om\sS_{a,\rho}\times\Om^{0,1}_J(M,TM)}$,
where the~$\lambdahat_i$ are as in~\eqref{eq:lambdahat}.

Throughout we will use the notation~$Y_\alpha$ for the exact divergence-free 
vector field associated to a $(2\sn-2)$-form~${\alpha\in\Om^{2\sn-2}(M)}$
via~${\iota(Y_\alpha)\rho=d\alpha}$.  When the choice of the 
symplectic form~$\om$ is clear from the context, we will use the 
notation~$v_H$ for the Hamiltonian vector field associated
to a function~${H\in\Om^0(M)}$ via~${\iota(v_H)\om = dH}$.

\begin{theorem}[{\bf Trautwein}]\label{thm:TRAUTWEIN}
The symplectic form~\eqref{eq:OMSE} on~$\sP(M,a,\rho)$ 
is preserved by the action of~$\Diff(M,\rho)$. 
The action of the subgroup~$\Diff^\ex(M,\rho)$
is a Hamiltonian group action and is generated by the 
$\Diff(M,\rho)$-equivariant moment map~${\sP(M,a,\rho)\to d\Om^1(M):
(\om,J)\mapsto 2\bigl(\Ric_{\rho,J}-\om/\hbar\bigr)}$, i.e.\ 
\begin{equation}\label{eq:SEMOMENT}
\int_M2\bigl(\Richat_\rho(J,\Jhat)-\omhat/\hbar\bigr)\wedge\alpha
= \Om_{\om,J}\bigl((\omhat,\Jhat),(\cL_{Y_\alpha}\om,\cL_{Y_\alpha}J)\bigr)
\end{equation}
for all~$(\om,J)\in\sP(M,a,\rho)$, all~${(\omhat,\Jhat)\in T_{(\om,J)}\sP(M,a,\rho)}$,
and all~${\alpha\in\Om^{2\sn-2}(M)}$.
\end{theorem}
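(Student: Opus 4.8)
The plan is to reduce Theorem~\ref{thm:TRAUTWEIN} to the two Hamiltonian structures we already have in hand, namely the moment map $J\mapsto 2\Ric_{\rho,J}$ for the $\Diff^\ex(M,\rho)$-action on $\sJ_0(M)$ from Theorem~\ref{thm:RICCI}, and the moment map for the action on $\sS_{a,\rho}$ coming from Trautwein's symplectic form~\eqref{eq:OMS}. Since $\sP(M,a,\rho)$ is an open subset of the product $\sS_{a,\rho}\times\sJ(M)$ with the product symplectic form~\eqref{eq:OMSE} (the second summand rescaled by $-2/\hbar$), a moment map for the diagonal action is, formally, the sum of the two individual moment maps. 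So the heart of the proof is: (1) show that $\Diff^\ex(M,\rho)$ preserves~\eqref{eq:OMSE} and acts on $\sP(M,a,\rho)$; (2) identify the moment map for the $\sS_{a,\rho}$-factor as $(\om)\mapsto -2\om$ (up to the $1/\hbar$ weighting, giving the contribution $2\om/\hbar$, with the correct sign appearing after the rescaling by $-2/\hbar$); (3) add the two contributions and verify~\eqref{eq:SEMOMENT}; (4) check $\Diff(M,\rho)$-equivariance.

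First I would record the infinitesimal action: for $\alpha\in\Om^{2\sn-2}(M)$ the vector field $Y_\alpha$ generates the path $t\mapsto(\phi_t^*\om,\phi_t^*J)$, whose derivative at $t=0$ is $(\cL_{Y_\alpha}\om,\cL_{Y_\alpha}J)$, and one must check this lies in $T_{(\om,J)}\sP$; for the $\sS_{a,\rho}$-component this is clear because $\phi_t^*$ preserves both the cohomology class and the volume form $\rho$ (as $\phi_t\in\Diff^\ex(M,\rho)$), and tameness is an open condition preserved by pullback. Next, the invariance of~\eqref{eq:OMSE} follows from the separate invariance of~\eqref{eq:OM} (stated in the text) and of~\eqref{eq:OMS}; for the latter, note $\phi^*$ sends a primitive $\lambdahat_i$ with $\lambdahat_i\wedge\om^{\sn-1}$ exact to $\phi^*\lambdahat_i$ with $\phi^*\lambdahat_i\wedge(\phi^*\om)^{\sn-1}$ exact, so the integrand in~\eqref{eq:OMS} transforms by $\phi^*$ and integrates to the same value.

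The main computation is the moment-map identity for the $\sS_{a,\rho}$-factor. One wants
\begin{equation*}
-\tfrac{2}{\hbar}\int_M\lambdahat\wedge\widehat{(\cL_{Y_\alpha}\om)}\,{}^\flat\wedge\frac{\om^{\sn-1}}{(\sn-1)!}
= -\tfrac{2}{\hbar}\int_M\omhat\wedge\alpha,
\end{equation*}
i.e.\ pairing $\omhat$ against $\cL_{Y_\alpha}\om$ in the Trautwein form should reproduce $\int_M\omhat\wedge\alpha$ up to the constant. Here one uses Cartan's formula $\cL_{Y_\alpha}\om=d\iota(Y_\alpha)\om$, the defining relation $\iota(Y_\alpha)\rho=d\alpha$, and the Lefschetz identity relating $\iota(Y_\alpha)\om$ to a primitive of $\cL_{Y_\alpha}\om$; the key algebraic point is that $\iota(Y_\alpha)\om\wedge\om^{\sn-1}/(\sn-1)! = \iota(Y_\alpha)(\om^\sn/\sn!) = \iota(Y_\alpha)\rho = d\alpha$, so $\iota(Y_\alpha)\om$ is exactly the primitive $\lambdahat$ one needs (after checking the secondary exactness condition in~\eqref{eq:lambdahat}, which holds since $\iota(Y_\alpha)\om\wedge\om^{\sn-1}=d\alpha\wedge\om^{\sn-1}/(\sn-1)!$... wait—$=d\alpha$ directly, which is trivially in $d\Om^{2\sn-2}$). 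Then the Trautwein pairing becomes $\int_M\lambdahat\wedge\lambdahat'\wedge\om^{\sn-1}/(\sn-1)!$ with $\lambdahat'=\iota(Y_\alpha)\om$, and integrating by parts against $d\alpha=\iota(Y_\alpha)\om\wedge\om^{\sn-1}/(\sn-1)!$ yields $\int_M\omhat\wedge\alpha$ after moving the $d$ onto $\lambdahat$ (using $d\lambdahat=\omhat$ and $d\om=0$). This is the step I expect to require the most care: the bookkeeping of exterior degrees, the sign from integration by parts, and correctly invoking condition~(L) to know the primitives exist and the pairing is well-defined independent of the choices of $\lambdahat_i$.

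Finally, adding the two moment-map contributions gives $(\om,J)\mapsto 2\Ric_{\rho,J}-2\om/\hbar$, and~\eqref{eq:SEMOMENT} is the sum of~\eqref{eq:RICCIMOMENT} (from Theorem~\ref{thm:RICCI}, applied with the same $\alpha$, valid on all of $\sJ(M)$ by the parenthetical remark after that theorem) and the $\sS_{a,\rho}$-identity just derived. Equivariance under the full group $\Diff(M,\rho)$ follows from~\eqref{eq:RICphi} for the Ricci term and from the naturality of all the constructions in~\eqref{eq:OMS}–\eqref{eq:OMSE} under pullback by volume-preserving diffeomorphisms; concretely, $\phi^*(\Ric_{\rho,J}-\om/\hbar)=\Ric_{\phi^*\rho,\phi^*J}-\phi^*\om/\hbar=\Ric_{\rho,\phi^*J}-\phi^*\om/\hbar$ since $\phi^*\rho=\rho$, which is exactly the equivariance statement. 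The only genuinely new content beyond assembling known pieces is the $\sS_{a,\rho}$-moment-map computation, so that is where the proof should be written out in full.
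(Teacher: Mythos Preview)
Your proposal is correct and follows essentially the same route as the paper: the paper's proof consists of invoking Theorem~\ref{thm:RICCI} for the $\sJ(M)$-factor and then verifying the single identity $\int_M\omhat\wedge\alpha = \int_M\lambdahat\wedge\iota(Y_\alpha)\om\wedge\om^{\sn-1}/(\sn-1)! = \Om_\om(\omhat,\cL_{Y_\alpha}\om)$ for the $\sS_{a,\rho}$-factor, which is exactly the computation you sketch using $\iota(Y_\alpha)\om\wedge\om^{\sn-1}/(\sn-1)!=\iota(Y_\alpha)\rho=d\alpha$ and integration by parts. Your additional remarks on invariance and equivariance are correct elaborations that the paper leaves implicit.
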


\begin{proof}
Equation~\eqref{eq:SEMOMENT} follows from
Theorem~\ref{thm:RICCI} and the identity
$$
\int_M\omhat\wedge\alpha
= \int_M\lambdahat\wedge\iota(Y_\alpha)\om\wedge\frac{\om^{\sn-1}}{(\sn-1)!}
= \Om_\om(\omhat,\cL_{Y_\alpha}\om)
$$
for all~${\om\in\sS_{a,\rho}}$, all~${\omhat=d\lambdahat\in T_\om\sS_{a,\rho}}$,
and all~${\alpha\in\Om^{2\sn-2}(M)}$.
\end{proof}

\begin{proposition}\label{prop:PROPER}
The action of~$\Diff^\ex(M,\rho)$ on~$\sP(M,a,\rho)$ is proper.
\end{proposition}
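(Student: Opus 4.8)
The plan is to deduce properness of the $\Diff^\ex(M,\rho)$-action on $\sP(M,a,\rho)$ from the properness of the action of the full volume-preserving group on the space of metrics, together with the observation that $\sP(M,a,\rho)$ sits inside a space of Riemannian metrics in a way that intertwines the actions. First I would recall that a $\Diff^\ex(M,\rho)$-tame pair $(\om,J)$ determines a Riemannian metric $g_{\om,J}:=\tfrac12(\om(\cdot,J\cdot)+\om(J\cdot,\cdot))$ (the symmetrization is needed since $J$ is only tamed, not compatible), and that this metric has volume form $\rho$ up to a controlled positive factor; more precisely $\det(g_{\om,J})$ relative to $\rho$ is bounded above and below on compact sets in $\sP(M,a,\rho)$. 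The classical fact I would invoke is Ebin's slice theorem / the properness of the action of $\Diff(M)$ — and a fortiori of $\Diff(M,\rho)$ — on the space of Riemannian metrics on a closed manifold $M$ (see Ebin, "The manifold of Riemannian metrics"), with respect to the $C^\infty$ topology or any Sobolev $H^s$ topology with $s$ large.

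The key steps, in order, are as follows. (1) Suppose $\phi_k\in\Diff^\ex(M,\rho)$ and $(\om_k,J_k)\to(\om,J)$ in $\sP(M,a,\rho)$ with $\phi_k^*(\om_k,J_k)=(\om_k',J_k')\to(\om',J')$ in $\sP(M,a,\rho)$; I must show $\{\phi_k\}$ has a convergent subsequence in $\Diff^\ex(M,\rho)$. (2) Pass to the associated metrics: $g_k:=g_{\om_k,J_k}\to g_{\om,J}$ and $\phi_k^*g_k=g_{\om_k',J_k'}\to g_{\om',J'}$, both convergent sequences of smooth metrics on $M$. (3) Apply properness of the $\Diff(M)$-action on the space of metrics to extract a subsequence with $\phi_k\to\phi$ in $\Diff(M)$, where $\phi^*g_{\om,J}=g_{\om',J'}$. (4) Check that the limit $\phi$ lies in $\Diff^\ex(M,\rho)$: since each $\phi_k$ preserves $\rho$ and $\phi_k\to\phi$ in $C^\infty$, the limit preserves $\rho$, so $\phi\in\Diff(M,\rho)$; to land in $\Diff^\ex(M,\rho)$ one uses that $\Diff^\ex(M,\rho)$ is the identity component, or more precisely that it is open and closed among volume-preserving diffeomorphisms isotopic to those we started with — here one invokes the flux homomorphism: $\phi\in\Diff^\ex(M,\rho)$ iff it is connected to the identity through $\rho$-preserving maps and the $\Flux$ invariant vanishes, and this is a closed condition under $C^\infty$-convergence because $\Diff^\ex(M,\rho)$ is a closed subgroup of $\Diff_0(M,\rho)$.

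The main obstacle I anticipate is step (4), verifying that the limit diffeomorphism remains \emph{exact} volume-preserving rather than merely volume-preserving and isotopic to the identity. The point is that $\Diff^\ex(M,\rho)$ is cut out inside $\Diff_0(M,\rho)$ by the vanishing of the flux homomorphism $\Flux:\widetilde{\Diff}_0(M,\rho)\to H^{2\sn-1}(M;\R)$, and one needs this subgroup to be closed in the $C^\infty$ topology so that it is preserved under limits. This is true — the flux is continuous in $C^1$ and the flux group (its image on $\pi_1$-type classes) is a discrete lattice when the relevant cohomology is integral, or more robustly one argues directly that a $C^\infty$-limit of maps each $\rho$-isotopic to the identity via exact isotopies is again such — but it is the step that requires care and where I would spend the bulk of the argument. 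A secondary, more routine obstacle is confirming that convergence in $\sP(M,a,\rho)$ really does give $C^\infty$ (or $H^s$) convergence of the associated metrics uniformly enough to feed into Ebin's theorem; this follows since $(\om,J)\mapsto g_{\om,J}$ is a smooth map between the relevant Fréchet (or Banach, after Sobolev completion) manifolds, but one should state the topology precisely. Once these are settled, properness follows: the extracted limit $\phi\in\Diff^\ex(M,\rho)$ satisfies $\phi^*g_{\om,J}=g_{\om',J'}$, and combined with $\phi^*\om=\om'$ (which passes to the limit directly from $\phi_k^*\om_k=\om_k'$) one recovers $\phi^*J=J'$, so $\phi^*(\om,J)=(\om',J')$ as required.
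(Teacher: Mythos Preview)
Your proposal is correct and follows essentially the same route as the paper: pass from the pair $(\om,J)$ to the symmetrized Riemannian metric, invoke properness of the diffeomorphism action on metrics (the paper cites Fujiki--Schumacher~\cite[Lemma~3.8]{FS1} rather than Ebin, but this is the same fact), and then argue that the limit lies in $\Diff^\ex(M,\rho)$ because this subgroup is closed. For the last step the paper is slightly more specific than your sketch: it cites~\cite[Exercise~10.2.23(v)]{MS} for the discreteness of the image of the flux homomorphism $\Flux_\rho:\pi_1(\Diff_0(M,\rho))\to H^{2\sn-1}(M;\R)$, and then~\cite[Theorem~10.2.5 \& Proposition~10.2.16]{MS} to conclude closedness---exactly the point you flagged as the main obstacle.
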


\begin{proof}
The properness proof by Fujiki--Schumacher~\cite{FS1} 
carries over to the present situation.  Choose 
sequences~${(\om_i,J_i)\in\sP(M,a,\rho)}$ and~${\phi_i\in\Diff^\ex(M,\rho)}$
such that the limits~${(\om,J)=\lim_{t\to\infty}(\om_i,J_i)}$
and~${(\om',J')=\lim_{t\to\infty}(\phi_i^*\om_i,\phi_i^*J_i)}$
exist in the $\Cinf$ topology and both belong to~$\sP(M,a,\rho)$.
Define the Riemannian metrics~$g_i$ 
by~${g_i(u,v):=\tfrac{1}{2}(\om_i(u,J_iv)+\om_i(v,J_iu))}$
and similarly for~${g,g'}$.  Then~$g_i$ converges to~$g$ 
and~${\phi_i^*g_i}$ converges to~$g'$ in the $\Cinf$ topology.  
Thus by~\cite[Lemma~3.8]{FS1} a subsequence
of~$\phi_i$ converges to a diffeomorphism~${\phi\in\Diff(M,\rho)}$
in the $\Cinf$ topology.  Now the flux 
homomorphism ${\Flux_\rho:\pi_1(\Diff_0(M,\rho))\to H^{2\sn-1}(M;\R)}$
has a discrete image by~\cite[Exercise~10.2.23(v)]{MS}.
Thus it follows from standard arguments as 
in~\cite[Theorem~10.2.5 \& Proposition~10.2.16]{MS}
that~$\Diff^\ex(M,\rho)$ is a closed subgroup of~$\Diff(M,\rho)$
with respect to the $\Cinf$ topology.
Hence~${\phi\in\Diff^\ex(M,\rho)}$.
\end{proof}

In the present setting the Marsden--Weinstein quotient 
is the space of exact volume preserving isotopy classes 
of tame symplectic Einstein structures given by
\begin{equation}\label{eq:WSE}
\begin{split}
\sW_\SE(M,a,\rho) 
&:= \sP_\SE(M,a,\rho)/\Diff^\ex(M,\rho),\\
\sP_\SE(M,a,\rho) 
&:= \left\{(\om,J)\in\sP(M,a,\rho)\,|\,
\Ric_{\rho,J}=\om/\hbar\right\}.
\end{split}
\end{equation}
The next result is the analogue of Proposition~\ref{prop:REGULAR}
in the symplectic Einstein setting.   In particular, part~(i) asserts that 
a pair~${(\om,J)\in\sP(M,a,\rho)}$ is a regular point for the moment map
if and only if the isotropy subgroup is discrete.   
While the finite-dimensional analogue follows directly from the definitions, 
in the present situation the proof requires elliptic regularity
(in the guise of Proposition~\ref{prop:REGULAR}).

\begin{proposition}\label{prop:SEREGULAR}
Fix a pair~${(\om,J)\in\sP(M,a,\rho)}$. 

\smallskip\noindent{\bf (i)}
Let~${\lambdahat\in\Om^1(M)}$. Then there  exists 
a pair~${(\omhat,\Jhat)\in T_{(\om,J)}\sP(M,a,\rho)}$ that 
satisfies~${\Richat_\rho(J,\Jhat)-\omhat/\hbar=d\lambdahat}$ 
if and only if~${\int_M\lambdahat\wedge\iota(v_H)\rho=0}$ 
for all~${H\in\Om^0(M)}$ with~${\cL_{v_H}J=0}$.

\smallskip\noindent{\bf (ii)}
Let~${(\omhat,\Jhat)\in T_{(\om,J)}\sP(M,a,\rho)}$. 
Then there exists a $(2\sn-2)$-form~${\alpha\in\Om^{2\sn-2}(M)}$ 
that satisfies~${\cL_{Y_\alpha}\om=\omhat}$
and $\cL_{Y_\alpha}J=\Jhat$ if and only 
if $\Om_{\om,J}\bigl((\omhat,\Jhat),(\omhat',\Jhat')\bigr)=0$ 
for all~${(\omhat',\Jhat')\in T_{(\om,J)}\sP(M,a,\rho)}$ 
with~${\Richat_\rho(J,\Jhat')=\omhat'/\hbar}$.
\end{proposition}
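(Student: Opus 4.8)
The plan is to derive both statements from Proposition~\ref{prop:REGULAR}, after recording three structural facts.  Fact~(a): a Hamiltonian vector field~$v_H$ is exact divergence-free, with~${\iota(v_H)\rho=d\alpha_H}$ for~${\alpha_H:=H\,\om^{\sn-1}/(\sn-1)!}$; conversely, under the Lefschetz condition~(L), an exact divergence-free vector field~$Y_\alpha$ with~${\cL_{Y_\alpha}\om=0}$ is Hamiltonian, because then~$\iota(Y_\alpha)\om$ is closed while~${\iota(Y_\alpha)\om\wedge\om^{\sn-1}/(\sn-1)!=\iota(Y_\alpha)\rho=d\alpha}$ is exact, so injectivity of~${b\mapsto a^{\sn-1}\cup b}$ forces~${[\iota(Y_\alpha)\om]=0}$; thus the infinitesimal isotropy of~$(\om,J)$ in~$\Diff^\ex(M,\rho)$ is~${\{v_H:\cL_{v_H}J=0\}}$.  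Fact~(b): the space~${\cK_J:=\{[\alpha]\in\Om^{2\sn-2}(M)/\ker d:\cL_{Y_\alpha}J=0\}}$ is finite-dimensional, since~${[\alpha]\mapsto Y_\alpha}$ embeds it into the space of $J$-holomorphic vector fields, which is finite-dimensional by ellipticity (by~\eqref{eq:LAMBDA4}, ${\cL_YJ=2J\bar\p_JY}$, and the kernel of~$\bar\p_J$ is finite-dimensional).  Fact~(c): the identity~${\int_M\omhat\wedge\alpha=\Om_\om(\omhat,\cL_{Y_\alpha}\om)}$ from the proof of Theorem~\ref{thm:TRAUTWEIN}, valid for~${\omhat\in T_\om\sS_{a,\rho}}$ and all~${\alpha\in\Om^{2\sn-2}(M)}$, together with the nondegeneracy of~$\Om_\om$ on~$T_\om\sS_{a,\rho}$.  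Granting these, the ``only if'' parts follow directly from~\eqref{eq:SEMOMENT}: in~(i) one evaluates it at~${\alpha=\alpha_H}$ with~${\cL_{v_H}J=0}$, so that~${Y_{\alpha_H}=v_H}$ and the right hand side vanishes, giving~${\int_M\lambdahat\wedge\iota(v_H)\rho=\int_M d\lambdahat\wedge\alpha_H=0}$; in~(ii) one applies~\eqref{eq:SEMOMENT} with the pair~$(\omhat',\Jhat')$ and the same~$\alpha$, which makes~${\Om_{\om,J}\bigl((\omhat,\Jhat),(\omhat',\Jhat')\bigr)}$ equal, up to sign, to~${\int_M2\bigl(\Richat_\rho(J,\Jhat')-\omhat'/\hbar\bigr)\wedge\alpha}$, hence~$0$ when~${\Richat_\rho(J,\Jhat')=\omhat'/\hbar}$.

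For the harder ``if'' direction of~(i), set~${\cR_J:=\{\Richat_\rho(J,\Jhat):\Jhat\in\Om^{0,1}_J(M,TM)\}\subseteq d\Om^1(M)}$; by Proposition~\ref{prop:REGULAR}(i) this is exactly the annihilator of~$\cK_J$ in~$d\Om^1(M)$.  Because~${T_{(\om,J)}\sP(M,a,\rho)=T_\om\sS_{a,\rho}\times\Om^{0,1}_J(M,TM)}$, the forms~${\Richat_\rho(J,\Jhat)-\omhat/\hbar}$ realized by pairs in the tangent space fill out precisely~${\cR_J+T_\om\sS_{a,\rho}}$, so it suffices to find~${\omhat\in T_\om\sS_{a,\rho}}$ with~${\int_M(d\lambdahat+\omhat/\hbar)\wedge\alpha=0}$ for all~${[\alpha]\in\cK_J}$ and then invoke Proposition~\ref{prop:REGULAR}(i) once more.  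Viewing this as a linear equation for~$\omhat$ with values in the \emph{finite-dimensional} space~$\cK_J^*$, it is solvable as soon as the functional~${[\alpha]\mapsto\int_M d\lambdahat\wedge\alpha}$ annihilates~${\{[\alpha]\in\cK_J:\int_M\omhat\wedge\alpha=0\ \text{for all}\ \omhat\in T_\om\sS_{a,\rho}\}}$; and for~$[\alpha]$ in that subspace, Fact~(c) gives~${\Om_\om(\omhat,\cL_{Y_\alpha}\om)=0}$ for all~$\omhat$, hence~${\cL_{Y_\alpha}\om=0}$, hence~${Y_\alpha=v_H}$ with~${\cL_{v_H}J=0}$ by Fact~(a), so~${\int_M d\lambdahat\wedge\alpha=\int_M\lambdahat\wedge\iota(v_H)\rho=0}$ by the hypothesis in~(i).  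Feeding the resulting~$\omhat$ back into Proposition~\ref{prop:REGULAR}(i) produces the required~$\Jhat$.

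For the harder ``if'' direction of~(ii), I would first apply the hypothesis to the pairs~${(0,\Jhat')}$ with~${\Richat_\rho(J,\Jhat')=0}$: since~${\Om_{\om,J}\bigl((\omhat,\Jhat),(0,\Jhat')\bigr)=\Om_{\rho,J}(\Jhat,\Jhat')}$, Proposition~\ref{prop:REGULAR}(ii) supplies~$\alpha_0$ with~${\cL_{Y_{\alpha_0}}J=\Jhat}$.  Put~${\sihat:=\omhat-\cL_{Y_{\alpha_0}}\om\in T_\om\sS_{a,\rho}}$.  Subtracting the ``only if'' identity for the orbit vector~${(\cL_{Y_{\alpha_0}}\om,\cL_{Y_{\alpha_0}}J)}$ from the hypothesis, and using~\eqref{eq:OMSE}, one gets~${\Om_\om(\sihat,\omhat')=0}$ for every~${\omhat'\in T_\om\sS_{a,\rho}}$ admitting some~$\Jhat'$ with~${\Richat_\rho(J,\Jhat')=\omhat'/\hbar}$; by Proposition~\ref{prop:REGULAR}(i) these~$\omhat'$ are exactly the members of~${T_\om\sS_{a,\rho}\cap\cR_J}$, which by Fact~(c) and Proposition~\ref{prop:REGULAR}(i) is the~$\Om_\om$-orthogonal complement, inside~${(T_\om\sS_{a,\rho},\Om_\om)}$, of the finite-dimensional subspace~${\cZ:=\{\cL_{Y_\alpha}\om:[\alpha]\in\cK_J\}}$.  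As~$\cZ$ is finite-dimensional and~$\Om_\om$ is nondegenerate, its double~$\Om_\om$-orthogonal complement equals~$\cZ$ itself, so~${\sihat=\cL_{Y_\beta}\om}$ for some~$\beta$ with~${\cL_{Y_\beta}J=0}$, and then~${\alpha:=\alpha_0+\beta}$ satisfies~${\cL_{Y_\alpha}\om=\omhat}$ and~${\cL_{Y_\alpha}J=\Jhat}$.

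The step I expect to be the main obstacle is getting the interaction of the two factors of~$\sP(M,a,\rho)$ right: one must see that enlarging the problem on~$\sJ_0(M)$ by the~$\sS_{a,\rho}$-direction cuts the relevant isotropy down from~$\cK_J$ (which governs Proposition~\ref{prop:REGULAR}) precisely to its Hamiltonian part~${\{v_H:\cL_{v_H}J=0\}}$.  This is exactly where the Lefschetz condition~(L) is used twice — once to identify symplectic exact divergence-free vector fields with Hamiltonian ones, and once (through Trautwein's theorem) to make~$\Om_\om$ nondegenerate on~$T_\om\sS_{a,\rho}$ — while the finite-dimensionality of~$\cK_J$ and~$\cZ$ is what lets one bypass the infinite-dimensional closed-range arguments in favour of elementary linear algebra on these spaces.
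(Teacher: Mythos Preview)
Your argument is correct and rests on the same three pillars as the paper's proof --- Proposition~\ref{prop:REGULAR}, the finite dimensionality of the space of $J$-holomorphic exact divergence-free vector fields, and the Lefschetz condition --- but the packaging differs.  The paper isolates three auxiliary lemmas (Lemmas~\ref{le:HAM1}--\ref{le:HAM3}): Lemma~\ref{le:HAM1} characterizes Hamiltonian~$Y_\alpha$ by the vanishing of~$\int_M\lambda\wedge d\alpha$ for all~$\lambda$ with~${d\lambda\wedge\om^{\sn-1}=0}$; Lemma~\ref{le:HAM2} then produces, for any functional on a finite-dimensional complement of the Hamiltonian fields, a $1$-form~$\lambda$ with~${d\lambda\wedge\om^{\sn-1}=0}$ realizing it; and Lemma~\ref{le:HAM3} uses these to decide when~$\cL_{Y_\alpha}J$ equals~$\cL_{v_H}J$ for some~$H$.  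For~(i) the paper constructs such a~$\lambda$ directly via Lemma~\ref{le:HAM2} and sets~${\omhat=\hbar\,d\lambda}$; for~(ii) it first realizes~$\omhat$ and~$\Jhat$ separately as~$\cL_{Y_\alpha}\om$ and~$\cL_{Y_\beta}J$, then invokes Lemma~\ref{le:HAM3} for~$\beta-\alpha$.  You instead work entirely through the symplectic form~$\Om_\om$ on~$T_\om\sS_{a,\rho}$: your Fact~(c) converts the integral conditions into $\Om_\om$-orthogonality, and the crux of~(ii) becomes the observation that, for a finite-dimensional subspace~$\cZ$ of a nondegenerate symplectic vector space, the double symplectic orthogonal~$(\cZ^{\perp})^{\perp}$ is~$\cZ$ itself.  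Your route is a bit more conceptual and, in~(ii), slightly more economical (you never need to realize~$\omhat$ by itself as an orbit direction); the paper's route is more explicit, and its lemmas are reusable --- Lemma~\ref{le:HAM3} reappears in the proof of Proposition~\ref{prop:SREGULAR}.
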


The necessity of the conditions in~(i) and~(ii) follows directly from~\eqref{eq:SEMOMENT}.
The proof of the converse implications relies on the following three lemmas,
which allow us to reduce the result to Proposition~\ref{prop:REGULAR}.

\begin{lemma}\label{le:HAM1}
Let~${(\om,J)\in\sP(M,a,\rho)}$ and let~${\alpha\in\Om^{2\sn-2}(M)}$.  
Then~$Y_\alpha$ is Hamiltonian if and only if~${\int_M\lambda\wedge d\alpha=0}$
for every $1$-form~$\lambda$ with~${d\lambda\wedge\om^{\sn-1}=0}$.
\end{lemma}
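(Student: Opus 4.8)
The plan is to characterize when the exact divergence-free vector field $Y_\alpha$ (defined by $\iota(Y_\alpha)\rho = d\alpha$) is actually Hamiltonian with respect to $\om$, and to express this condition as an orthogonality relation against the $1$-forms $\lambda$ satisfying $d\lambda\wedge\om^{\sn-1}=0$. The key observation is that $Y_\alpha$ is Hamiltonian if and only if the closed $1$-form $\iota(Y_\alpha)\om$ is exact: indeed, since $\om$ is symplectic, $Y_\alpha$ is a vector field with $\iota(Y_\alpha)\om$ closed precisely when $\cL_{Y_\alpha}\om = 0$, and here closedness of $\iota(Y_\alpha)\om$ is automatic because $d\iota(Y_\alpha)\om = \cL_{Y_\alpha}\om$ and one checks $\cL_{Y_\alpha}\om$ is exact hence... — more carefully, I would first reduce to showing that $\iota(Y_\alpha)\om$ is a \emph{closed} $1$-form, which should follow from a direct computation using $d\alpha = \iota(Y_\alpha)\rho$ and $\rho = \om^\sn/\sn!$ (the relation $\iota(Y_\alpha)\om \wedge \om^{\sn-1}/(\sn-1)! = \iota(Y_\alpha)\rho = d\alpha$ shows $\iota(Y_\alpha)\om\wedge\om^{\sn-1}$ is exact, which already forces $d(\iota(Y_\alpha)\om)\wedge\om^{\sn-1} = 0$, and by the Lefschetz condition on $\om$ this gives $d\iota(Y_\alpha)\om = 0$ once one knows it is a $2$-form annihilated by wedging with $\om^{\sn-1}$).

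Once $\iota(Y_\alpha)\om$ is known to be closed, $Y_\alpha$ is Hamiltonian if and only if $[\iota(Y_\alpha)\om] = 0$ in $H^1(M;\R)$, i.e. if and only if $\int_M \iota(Y_\alpha)\om \wedge \beta = 0$ for every closed $(2\sn-1)$-form $\beta$ representing a class in $H^{2\sn-1}(M;\R)$; by Poincar\'e duality this is equivalent to pairing against a spanning set of de Rham classes. The Lefschetz condition (L) says the classes $a^{\sn-1}\cup b$ with $b\in H^1(M;\R)$ span $H^{2\sn-1}(M;\R)$, so it suffices to test against forms of the type $\lambda\wedge\om^{\sn-1}/(\sn-1)!$ with $d(\lambda\wedge\om^{\sn-1}) = 0$; and since $\om$ is closed this last condition is exactly $d\lambda\wedge\om^{\sn-1}=0$. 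Thus $Y_\alpha$ is Hamiltonian if and only if
$$
\int_M \iota(Y_\alpha)\om \wedge \lambda\wedge\frac{\om^{\sn-1}}{(\sn-1)!} = 0
\quad\text{for all }\lambda\text{ with }d\lambda\wedge\om^{\sn-1}=0.
$$

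To finish, I would rewrite the integrand: up to sign, $\iota(Y_\alpha)\om\wedge\lambda\wedge\om^{\sn-1}/(\sn-1)!$ equals $\lambda\wedge\iota(Y_\alpha)\om\wedge\om^{\sn-1}/(\sn-1)! = \lambda\wedge\iota(Y_\alpha)\rho = \lambda\wedge d\alpha$, using again $\iota(Y_\alpha)\om\wedge\om^{\sn-1}/(\sn-1)! = \iota(Y_\alpha)\rho = d\alpha$ together with the fact that $\lambda$ is a $1$-form and $\iota(Y_\alpha)$ is a contraction (so the relevant graded-commutation signs work out). This yields exactly the stated criterion: $Y_\alpha$ is Hamiltonian if and only if $\int_M \lambda\wedge d\alpha = 0$ for every $1$-form $\lambda$ with $d\lambda\wedge\om^{\sn-1}=0$.

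The main obstacle I anticipate is the first step — establishing that $\iota(Y_\alpha)\om$ is closed. This is a pointwise linear-algebra-plus-Lefschetz argument: from $\iota(Y_\alpha)\om\wedge\om^{\sn-1}$ exact one gets $d\iota(Y_\alpha)\om\wedge\om^{\sn-1}=0$, and one must invoke the hard Lefschetz-type surjectivity/injectivity built into condition (L) to conclude $d\iota(Y_\alpha)\om=0$; alternatively one can argue directly that $\cL_{Y_\alpha}\om$ is an exact $2$-form lying in the image of $\cL$ whose wedge with $\om^{\sn-1}$ vanishes, and then use the structure of $T_\om\sS_{a,\rho}$ described in the excerpt. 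Everything after that is de Rham duality packaging plus one bookkeeping computation of wedge signs, which I would not spell out in detail.
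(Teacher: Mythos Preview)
Your argument has a genuine gap at what you correctly identify as the ``main obstacle'': the $1$-form $\iota(Y_\alpha)\om$ is \emph{not} closed in general. The vector field $Y_\alpha$ is only volume-preserving, not symplectic, so $\cL_{Y_\alpha}\om = d\iota(Y_\alpha)\om$ need not vanish. (For a concrete example on $T^4$ with $\om=dx_1\wedge dx_2+dx_3\wedge dx_4$, take $Y=\sin(2\pi x_2)\partial_{x_3}$; this is exact divergence-free but $d\iota(Y)\om=2\pi\cos(2\pi x_2)\,dx_2\wedge dx_4\ne0$.) Your proposed justification --- deducing $d\iota(Y_\alpha)\om=0$ from $d\iota(Y_\alpha)\om\wedge\om^{\sn-1}=0$ via condition~(L) --- cannot work: condition~(L) is a statement about the cohomological map $H^1\to H^{2\sn-1}$, whereas wedging with $\om^{\sn-1}$ on $2$-forms (or on $\Om^2$) always has a large kernel, namely the primitive $2$-forms. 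No Lefschetz-type hypothesis makes $\wedge\,\om^{\sn-1}:\Lambda^2\to\Lambda^{2\sn}$ injective for $\sn\ge2$.

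The paper's proof sidesteps this entirely by never isolating closedness as a separate step. It uses the \emph{pointwise} symplectic Lefschetz isomorphism $\Lambda^1\cong\Lambda^{2\sn-1}$ (not condition~(L)) to write an arbitrary closed $(2\sn-1)$-form $\beta$ as $\lambda\wedge\om^{\sn-1}/(\sn-1)!$; then $d\beta=0$ forces $d\lambda\wedge\om^{\sn-1}=0$, so the hypothesis applies and the relevant $1$-form (written there as $(*d\alpha)\circ J$, which up to sign is $\iota(Y_\alpha)\om$) pairs to zero with \emph{every} closed $(2\sn-1)$-form. The standard de~Rham fact --- a $1$-form pairing to zero with all closed $(2\sn-1)$-forms is exact --- then delivers exactness directly, with closedness coming along for free. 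Your steps~4--5 are essentially this computation; the fix is to discard step~1, replace the cohomological~(L) in step~4 by the pointwise isomorphism $\Lambda^1\to\Lambda^{2\sn-1}$, and argue that pairing against all closed $\beta$ gives exactness outright rather than routing through $H^1$.
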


\begin{proof}
That the condition is necessary follows directly from the definitions.
Conversely, assume that~${\int_M\lambda\wedge d\alpha=0}$
for all~${\lambda\in\Om^1(M)}$ with~${d\lambda\wedge\om^{\sn-1}=0}$.
Let~$\beta$ be a closed $(2\sn-1)$-form and choose~${\lambda\in\Om^1(M)}$
with~${\beta=\lambda\wedge\om^{\sn-1}/(\sn-1)!}$.  Then
$$
\int_M \beta\wedge(({*d\alpha})\circ J)
= \int_M \lambda\wedge(({*d\alpha})\circ J)\wedge\frac{\om^{\sn-1}}{(\sn-1)!}
= \int_M\lambda\wedge d\alpha 
= 0.
$$
Hence~${({*d\alpha})\circ J}$ is an exact $1$-form.  Choose~${H\in\Om^0(M)}$
such that~${({*d\alpha})\circ J=dH}$.  Then~${{*d\alpha}=-dH\circ J}$, 
hence~${d\alpha={*(dH\circ J)}=dH\wedge\om^{\sn-1}/(\sn-1)!=\iota(v_H)\rho}$ 
and so~${Y_\alpha=v_H}$ is a Hamiltonian vector field. 
\end{proof}

\begin{lemma}\label{le:HAM2}
Let~${(\om,J)\in\sP(M,a,\rho)}$ and let~${\sY\subset\Vect^\ex(M,\rho)}$ 
be a finite-di\-men\-sio\-nal subspace that contains no nonzero Hamiltonian 
vector field.  Then, for every linear functional~${\Phi:\sY\to\R}$, there exists 
a $1$-form~$\lambda$ such that~${d\lambda\wedge\om^{\sn-1}=0}$
and~${\int_M\lambda\wedge\iota(Y)\rho=\Phi(Y)}$ for all~${Y\in\sY}$.
\end{lemma}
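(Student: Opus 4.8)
The plan is to realize the linear functional $\Phi$ by a $1$-form $\lambda$ subject to the constraint $d\lambda\wedge\om^{\sn-1}=0$, using a duality/non-degeneracy argument. First I would introduce the space $\sL:=\{\lambda\in\Om^1(M)\,|\,d\lambda\wedge\om^{\sn-1}=0\}$, so that the pairing in question is the bilinear map
\begin{equation*}
\sL\times\sY\to\R,\qquad (\lambda,Y)\mapsto\int_M\lambda\wedge\iota(Y)\rho.
\end{equation*}
The claim is exactly that, for each fixed finite-dimensional $\sY$ containing no nonzero Hamiltonian vector field, the induced linear map $\sL\to\sY^*$ is surjective. Since $\sY$ is finite-dimensional, surjectivity is equivalent to the statement that the \emph{transpose} map $\sY\to\sL^*$ is injective; and injectivity of the transpose says precisely that if $Y\in\sY$ satisfies $\int_M\lambda\wedge\iota(Y)\rho=0$ for all $\lambda\in\sL$, then $Y=0$. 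But this is immediate from Lemma~\ref{le:HAM1}: that vanishing condition (written with $\iota(Y)\rho=d\alpha$ for the appropriate $\alpha$, noting $\lambda\wedge\iota(Y)\rho=\lambda\wedge d\alpha$ up to the irrelevant exact-vs-closed distinction in the $1$-form factor) forces $Y$ to be Hamiltonian, hence $Y=0$ by hypothesis on $\sY$.

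So the argument reduces to pure linear algebra on a finite-dimensional target, and the steps in order are: (1) set up $\sL$ and the pairing; (2) observe that the desired conclusion is surjectivity of $\sL\to\sY^*$; (3) dualize: surjectivity onto the finite-dimensional space $\sY^*$ is equivalent to the non-existence of a nonzero $Y\in\sY$ annihilated by every $\lambda\in\sL$ under the pairing; (4) apply Lemma~\ref{le:HAM1} to identify that annihilator condition with "$Y$ is Hamiltonian"; (5) invoke the hypothesis that $\sY$ contains no nonzero Hamiltonian vector field to conclude the annihilator is zero, hence $\sL\to\sY^*$ is onto, which is the assertion. Concretely, given $\Phi\in\sY^*$, pick any $\lambda\in\sL$ mapping to it; this is the $1$-form we want.

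One technical point to handle carefully is the precise bookkeeping in step~(4): Lemma~\ref{le:HAM1} is phrased in terms of $\int_M\lambda\wedge d\alpha$ where $\iota(Y_\alpha)\rho=d\alpha$, whereas here the pairing is $\int_M\lambda\wedge\iota(Y)\rho$; these agree once we write $\iota(Y)\rho=d\alpha$, so I would just note that every $Y\in\sY\subset\Vect^\ex(M,\rho)$ is of the form $Y=Y_\alpha$ by definition of exactness, and then the two expressions coincide verbatim. I do not expect a genuine obstacle here — the only mild subtlety is making the finite-dimensionality of $\sY$ do its job (it is what lets us pass from "no nonzero annihilated vector" to "surjective onto $\sY^*$" without any completeness or closed-range hypothesis on $\sL$), and I would state that reduction explicitly rather than leave it implicit. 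The substantive input, the characterization of Hamiltonian $Y_\alpha$ via vanishing against the constrained $1$-forms, has already been supplied by Lemma~\ref{le:HAM1}.
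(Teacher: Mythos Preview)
Your proposal is correct and follows essentially the same argument as the paper: define $\sL$, note that surjectivity of $\sL\to\sY^*$ is equivalent (by finite-dimensionality of $\sY$) to injectivity of the dual map $\sY\to\sL^*$, and use Lemma~\ref{le:HAM1} together with the hypothesis on $\sY$ to establish that injectivity. The bookkeeping remark about writing $Y=Y_\alpha$ so that $\iota(Y)\rho=d\alpha$ is exactly right.
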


\begin{proof}
Define $\sL:=\{\lambda\in\Om^1(M)\,|\,d\lambda\wedge\om^{\sn-1}=0\}$
and consider the linear map~${\sL\to\sY^*:\lambda\mapsto\Phi_\lambda}$ 
defined by~${\Phi_\lambda(Y):=\int_M\lambda\wedge\iota(Y)\rho}$ 
for~${\lambda\in\sL}$ and~${Y\in\sY}$.  Then the dual 
map~${\sY\to\sL^*}$ is injective by assumption and Lemma~\ref{le:HAM1}.
Since~$\sY$ is finite-dimensional, this implies that 
the map~${\sL\to\sY^*}$ is surjective.
\end{proof}

\begin{lemma}\label{le:HAM3}
Let~${(\om,J)\in\sP(M,a,\rho)}$ and let~${\alpha\in\Om^{2\sn-2}(M)}$.
Then the following assertions are equivalent.

\smallskip\noindent{\bf (a)}
There exists a function~${H\in\Om^0(M)}$ such that~${\cL_{v_H}J=\cL_{Y_\alpha}J}$.

\smallskip\noindent{\bf (b)}
If~${\lambda\in\Om^1(M)}$ satisfies~${d\lambda\wedge\om^{\sn-1}=0}$
and~${\int_Md\lambda\wedge\beta=0}$ 
for all~${\beta\in\Om^{2\sn-2}(M)}$ with~${\cL_{Y_\beta}J=0}$,
then~${\int_Md\lambda\wedge\alpha=0}$.
\end{lemma}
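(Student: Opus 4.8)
The plan is to prove the equivalence of \textbf{(a)} and \textbf{(b)} in Lemma~\ref{le:HAM3} by a duality argument analogous to the one used for Lemmas~\ref{le:HAM1} and~\ref{le:HAM2}, reducing everything to statements about finite-dimensional linear algebra together with the closed-range property of the relevant elliptic operators. The key observation is that the map $\Jhat\mapsto d\bigl(\Lambda_\rho(J,\Jhat)\bigr)=2\Richat_\rho(J,\Jhat)$ from \eqref{eq:LAMBDA1}, restricted to the image of the infinitesimal action $\beta\mapsto\cL_{Y_\beta}J$, has a kernel described precisely by the condition $\cL_{Y_\beta}J=0$, and that the Hamiltonian vector fields $v_H$ form a distinguished subspace among the exact divergence-free vector fields. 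First I would set $\sN:=\{\beta\in\Om^{2\sn-2}(M)\mid\cL_{Y_\beta}J=0\}$ and work modulo this space; by Proposition~\ref{prop:REGULAR}(i) (applied with the roles of the spaces interchanged via the pairing in \eqref{eq:RICCIMOMENT}) the condition in \textbf{(b)}, namely that $\int_Md\lambda\wedge\beta=0$ for all $\beta\in\sN$, is exactly the statement that $d\lambda$ lies in the image of $\Jhat\mapsto2\Richat_\rho(J,\Jhat)$, i.e.\ there exists $\Jhat'\in\Om^{0,1}_J(M,TM)$ with $\Richat_\rho(J,\Jhat')=\tfrac12d\lambda$.

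The direction \textbf{(a)}$\Rightarrow$\textbf{(b)} should be the routine one: assuming $\cL_{v_H}J=\cL_{Y_\alpha}J$, any $\lambda$ as in \textbf{(b)} gives, via the hypothesis that $\int_Md\lambda\wedge\beta=0$ on $\sN$ and the equivariance identity $\Lambda_\rho(J,\cL_{Y_\beta}J)$ of \eqref{eq:LAMBDA3}, that $\int_Md\lambda\wedge\alpha=\int_Md\lambda\wedge\alpha'$ whenever $\cL_{Y_\alpha}J=\cL_{Y_{\alpha'}}J$; taking $\alpha'$ with $Y_{\alpha'}=v_H$ and using \eqref{eq:LAMBDA3} together with the constraint $d\lambda\wedge\om^{\sn-1}=0$ (which, after pairing, kills the $\iota(v_H)\Ric_{\rho,J}$ term appropriately and forces the correction terms $df_v\circ J$, $df_{Jv}$ to contribute nothing since $v_H$ is Hamiltonian hence divergence-free with $f_{v_H}=0$) yields $\int_Md\lambda\wedge\alpha=0$. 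The harder direction is \textbf{(b)}$\Rightarrow$\textbf{(a)}. Here I would argue by contraposition combined with Hahn--Banach: if no such $H$ exists, then $\cL_{Y_\alpha}J$ does \emph{not} lie in the subspace $\{\cL_{v_H}J\mid H\in\Om^0(M)\}$ of $\{\cL_{Y_\beta}J\mid\beta\in\Om^{2\sn-2}(M)\}$, so there is a continuous linear functional vanishing on the latter but not at $\cL_{Y_\alpha}J$; the content is to realize this functional in the form $\Jhat'\mapsto\Om_{\rho,J}(\Jhat',\Jhat'')$ for a suitable $\Jhat''$, or rather to realize it as pairing with some $d\lambda$ with $d\lambda\wedge\om^{\sn-1}=0$, using \eqref{eq:LAMBDA4} (the second identity $\Lambda_\rho(J,\Jhat)=\iota(2J\bar\p_J^*\Jhat^*)\om$) to convert the vanishing on Hamiltonian directions into the constraint $d\lambda\wedge\om^{\sn-1}=0$, and \eqref{eq:LAMBDA2} to rewrite the pairings as integrals of $d\lambda$ against the relevant $(2\sn-2)$-forms.

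I expect the main obstacle to be the closed-range/finite-codimensionality bookkeeping needed to legitimately apply Hahn--Banach in the Fr\'echet (or, after Sobolev completion, Banach) setting: one must know that $\{\cL_{Y_\beta}J\mid\beta\in\Om^{2\sn-2}(M)\}$ is a closed subspace and, more delicately, that the subspace cut out by the Hamiltonian condition has closed range inside it, so that the annihilator is computed by honest $1$-forms $\lambda$ rather than by distributions. This is exactly where elliptic regularity enters, as flagged in the paragraph preceding Proposition~\ref{prop:SEREGULAR}; concretely I would pass to Sobolev completions, invoke the Fredholm property of the operators $\bar\p_J$, $\bar\p_J^*$ and of $\Jhat\mapsto\Richat_\rho(J,\Jhat)$ (all of which follow from the underlying $\bar\p$-type symbol), apply the finite-dimensional duality of Lemmas~\ref{le:HAM1} and~\ref{le:HAM2} to the finite-dimensional obstruction and isotropy spaces, and then bootstrap the resulting Sobolev $1$-form $\lambda$ back to a smooth one by elliptic regularity. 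The remaining steps --- translating between the three equivalent descriptions of the pairing via \eqref{eq:LAMBDA1}, \eqref{eq:LAMBDA2}, \eqref{eq:LAMBDA3}, \eqref{eq:LAMBDA4} --- are then routine symbol manipulations and I would not spell them out in full.
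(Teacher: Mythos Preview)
Your overall strategy (contraposition and a separation argument) matches the paper's, but you make the argument substantially harder than it needs to be, and you miss the key simplification that renders all your closed-range worries moot.

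For \textbf{(a)}$\Rightarrow$\textbf{(b)} the paper's argument is one line: set $\beta:=\alpha-H\om^{\sn-1}/(\sn-1)!$, so that $Y_\beta=Y_\alpha-v_H$ and hence $\cL_{Y_\beta}J=0$; then for any $\lambda$ as in \textbf{(b)} one has $\int_Md\lambda\wedge\alpha=\int_Md\lambda\wedge\beta+\int_MHd\lambda\wedge\om^{\sn-1}/(\sn-1)!=0$. There is no need to invoke $\Lambda_\rho$, equation~\eqref{eq:LAMBDA3}, or any divergence computations.

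For \textbf{(b)}$\Rightarrow$\textbf{(a)} the paper also argues by contraposition, but it separates on the \emph{vector field} side rather than on the $\cL_YJ$ side. The point is that the space $\{Y_\beta\mid\cL_{Y_\beta}J=0\}$ of exact divergence-free $J$-holomorphic vector fields is \emph{finite-dimensional}; choose a complement $\sY_0$ to the Hamiltonian holomorphic vector fields inside it. If \textbf{(a)} fails, one checks that $\sY:=\sY_0\oplus\R Y_\alpha$ contains no nonzero Hamiltonian vector field, and then Lemma~\ref{le:HAM2} applied to this finite-dimensional $\sY$ immediately produces a $\lambda$ with $d\lambda\wedge\om^{\sn-1}=0$, $\int_M\lambda\wedge\iota(Y)\rho=0$ for $Y\in\sY_0$, and $\int_M\lambda\wedge\iota(Y_\alpha)\rho=1$. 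The first two conditions give $\int_Md\lambda\wedge\beta=0$ for all $\beta$ with $\cL_{Y_\beta}J=0$ (the Hamiltonian part is killed by $d\lambda\wedge\om^{\sn-1}=0$), contradicting \textbf{(b)}.

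So your Hahn--Banach/Sobolev/closed-range machinery, while not wrong in principle, is entirely unnecessary: the paper's proof never leaves finite-dimensional linear algebra beyond the black-box use of Lemma~\ref{le:HAM2}. Your approach would work after enough bookkeeping, but what it buys you is nothing --- the finite-dimensionality of holomorphic vector fields is the whole trick, and once you see it, the argument is three lines.
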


\begin{proof}
Assume~(a) and define~${\beta:=\alpha-H\om^{\sn-1}/(\sn-1)!}$.
Then~${Y_\beta=Y_\alpha-v_H}$, hence~${\cL_{Y_\beta}J=0}$ by~(a),
and so each~$\lambda$ as in~(b) satisfies
$$
\int_Md\lambda\wedge\alpha
= \int_Md\lambda\wedge\left(\beta+H\frac{\om^{\sn-1}}{(\sn-1)!}\right)
= 0.
$$
Conversely, assume~(a) does not hold and
choose a subspace~${\sY_0\subset\Vect^\ex(M,\rho)}$ such 
that $\{Y_\beta\,|\,\cL_{Y_\beta}J=0\}=\sY_0\oplus\{v_H\,|\,\cL_{v_H}J=0\}$.
Then~${\sY:=\sY_0\oplus\R Y_\alpha}$
does not contain nonzero Hamiltonian vector fields
and so, by Lemma~\ref{le:HAM2} there is a~${\lambda\in\Om^1(M)}$ 
such that~${d\lambda\wedge\om^{\sn-1}=0}$,
${\int_M\lambda\wedge\iota(Y_\alpha)\rho=1}$,
and~${\int_M\lambda\wedge\iota(Y)\rho=0}$ for all~${Y\in\sY_0}$.  
This implies~${\int_Md\lambda\wedge\beta=0}$
for all~${\beta\in\Om^{2\sn-2}(M)}$ with~${\cL_{Y_\beta}J=0}$ 
and~${\int_Md\lambda\wedge\alpha=1}$.  
Hence~(b) does not hold.
\end{proof}

\begin{proof}[Proof of Proposition~\ref{prop:SEREGULAR}]
We prove part~(i). Assume that $\lambdahat\in\Om^1(M)$ 
satisfies~${\int_M\lambdahat\wedge\iota(v_H)\rho=0}$
for all~${H\in\Om^0(M)}$ with~${\cL_{v_H}J=0}$.
Then by Lemma~\ref{le:HAM2} 
there exists a~${\lambda\in\Om^1(M)}$ 
such that~${d\lambda\wedge\om^{\sn-1}=0}$
and~${\int_M(\lambda+\lambdahat)\wedge\iota(Y_\alpha)\rho=0}$
for all~${\alpha\in\Om^{2\sn-2}(M)}$ with~${\cL_{Y_\alpha}J=0}$.
By part~(i) of Proposition~\ref{prop:REGULAR}
there exists a~${\Jhat\in\Om^{0,1}_J(M,TM)}$ 
such that~${\Richat_\rho(J,\Jhat)=d(\lambda+\lambdahat)}$.
This proves~(i) with~${\omhat:=\hbar d\lambda}$. 

To prove~(ii),
let~${(\omhat,\Jhat)\in T_{(\om,J)}\sP(M,a,\rho)}$  
such that~${\Om_{\om,J}\bigl((\omhat,\Jhat),(\omhat',\Jhat')\bigr)=0}$
for all~${(\omhat',\Jhat')\in T_{(\om,J)}\sP(M,a,\rho)}$
with~${\Richat_\rho(J,\Jhat')=\omhat'/\hbar}$.
Then by part~(ii) of Proposition~\ref{prop:REGULAR}
there exist~${\alpha,\beta}$ 
with~${\omhat = d\iota(Y_\alpha)\om}$ and~${\Jhat=\cL_{Y_\beta}J}$.
Let~${\lambda\in\Om^1(M)}$ such that~${d\lambda\wedge\om^{\sn-1}=0}$
and~$\int_M{d\lambda\wedge\alpha'=0}$ for all~${\alpha'\in\Om^{2\sn-2}(M)}$
with~${\cL_{Y_{\alpha'}}J=0}$.  By part~(i) of Proposition~\ref{prop:REGULAR}
choose~$\Jhat'$ such that~${\Richat_\rho(J,\Jhat')=d\lambda}$. 
Then
\begin{equation*}
\begin{split}
2\int_M d\lambda\wedge(\beta - \alpha)
&= 
\tfrac{1}{2}\int_M
\trace\bigl(\Jhat' J\cL_{Y_\beta}J\bigr)\frac{\om^\sn}{\sn!}
- \tfrac{2}{\hbar}\int_M\hbar\lambda
\wedge\bigl(\iota(Y_\alpha)\om\bigr)
\wedge\frac{\om^{\sn-1}}{(\sn-1)!}  \\
&= 
\Om_{\om,J}\bigl((\omhat',\Jhat'),(\omhat,\Jhat)\bigr) 
= 0,
\end{split}
\end{equation*}
where~${\omhat':=\hbar d\lambda=\hbar\Richat_\rho(J,\Jhat')}$.  
Thus by Lemma~\ref{le:HAM3} there exists a 
function~$H$ such that~${\cL_{Y_{\beta-\alpha}}J=\cL_{v_H}J}$,
so~${\cL_{Y_\alpha+v_H}J=\Jhat}$ and~${d\iota(Y_\alpha+v_H)\om=\omhat}$.
This proves~(ii).
\end{proof}

Call a pair~${(\om,J)\in\sP(M,a,\rho)}$ {\bf regular} if there are 
no nonzero $J$-holomorphic Hamiltonian vector fields. 
By part~(i) of Proposition~\ref{prop:SEREGULAR} a pair~$(\om,J)$ is regular 
if and only if the map
$
T_{(\om,J)}\sP(M,a,\rho)\to d\Om^1(M):
(\omhat,\Jhat)\mapsto\Richat_\rho(J,\Jhat)-\omhat/\hbar
$
is surjective.  Thus by Proposition~\ref{prop:PROPER}
(and a suitable local slice theorem that requires a Nash--Moser type proof)
the regular part of~$\sW_\SE(M,a,\rho)$ is a symplectic orbifold
whose tangent space at the equivalence class of a regular 
element~${(\om,J)\in\sP_\SE(M,a,\rho)}$ is the quotient
$$
T_{[\om,J]_\rho}\sW_\SE(M,a,\rho) 
= \frac{\bigl\{(\omhat,\Jhat)\,\big|\,
\omhat\wedge\om^{\sn-1}=0,\,
\Richat_\om(J,\Jhat)=\omhat/\hbar
\bigr\}}
{\left\{(\cL_{Y_\alpha}\om,\cL_{Y_\alpha}J)
\,|\,\alpha\in\Om^{2\sn-2}(M)\right\}}.
$$
The $2$-form~\eqref{eq:OMSE} is nondegenerate on this quotient 
by part~(ii) of Proposition~\ref{prop:SEREGULAR}.

\begin{remark}[{\bf Compatible pairs}]\label{rmk:COMPATIBLE}
The space~${\sC(M,a,\rho)\subset\sP(M,a,\rho)}$ of compatible pairs 
is a submanifold of~$\sP(M,a,\rho)$ and~${(\omhat,\Jhat)\in T_{(\om,J)}\sP(M,a,\rho)}$ 
is tangent to~$\sC(M,a,\rho)$ at a compatible pair~$(\om,J)$
if and only if 
\begin{equation}\label{eq:TC}
\omhat(u,v)-\omhat(Ju,Jv) = \om(\Jhat u,Jv) + \om(Ju,\Jhat v).
\end{equation}
It is an open question whether the restriction 
of the $2$-form~\eqref{eq:OMSE} 
to~$\sC(M,a,\rho)$ is nondegenerate.
A pair~${(\omhat,\Jhat)\in T_{(\om,J)}\sC(M,a,\rho)}$
belongs to its kernel if and only if
\begin{equation}\label{eq:KERNEL}
\Jhat+\Jhat^*=0,\qquad \Richat_\rho(J,\Jhat)=\omhat/\hbar.
\end{equation}
If this holds, then there exists a vector field~${X\in\Vect(M)}$ such 
that~${\omhat=d\iota(X)\om}$ and~${\Jhat = \tfrac{1}{2}(\cL_XJ-(\cL_XJ)^*)}$
(respectively~${\Jhat=\cL_XJ}$ in the K\"ahler--Einstein case). 
Thus the solutions of~\eqref{eq:KERNEL} form the kernel 
of a Fredholm operator, so nondegeneracy is an open condition.  
It follows also that the restriction of the $2$-form~\eqref{eq:OMSE} 
to~$\sC(M,a,\rho)$ is nondegenerate at a K\"ahler--Einstein pair~$(\om,J)$ 
if and only if 
\begin{equation}\label{eq:NONDEG}
\cL_XJ+(\cL_XJ)^*=0\qquad\implies\qquad\cL_XJ=0.
\end{equation}
Now fix a K\"ahler manifold~$(M,\om,J)$.
Then Proposition~\ref{prop:LAMBDA} yields the equation
\begin{equation*}
\begin{split}
\tfrac{1}{2}\INNER{(\cL_XJ)^*}{\cL_XJ}
&= 
-\tfrac{1}{2}\int_M\trace\bigl((\cL_XJ)J(\cL_{JX}J)\bigr)\rho 
=
- \int_M\Lambda_\rho(J,\cL_XJ)\wedge\iota(JX)\rho \\
&= 
\int_M\Bigl(df_X\circ J - df_{JX} - 2\iota(X)\Ric_{\rho,J}\Bigr) \wedge\iota(JX)\rho 
\end{split}
\end{equation*}
for every vector field~$X$, and this implies the Weitzenb\"ock formula 
\begin{equation}\label{eq:WB}
\tfrac{1}{4}\Norm{\cL_XJ+(\cL_XJ)^*}^2
= \tfrac{1}{2}\Norm{\cL_XJ}^2 
+ \int_M\bigl(f_X^2+f_{JX}^2-2\Ric_{\rho,J}(X,JX)\bigr)\rho.
\end{equation}
In the K\"ahler--Einstein case~${\Ric_{\rho,J}=\om/\hbar}$
with~${\hbar<0}$ this identity implies~\eqref{eq:NONDEG}.
In the Fano case~${\hbar>0}$ it is an open question 
whether~\eqref{eq:NONDEG} holds for all K\"ahler--Einstein pairs.
\end{remark}

%%%%%%%%%%%%%%%%%%%%%%%%%%%%%%%%
%%%%%%%% Subsection 1.3 %%%%%%%%
%%%%%%%%%%%%%%%%%%%%%%%%%%%%%%%%

\subsection{Scalar curvature}\label{subsec:SCALAR}

Let~$(M,\om)$ be a closed symplectic $2\sn$-manifold
with the volume form~${\rho:=\om^\sn/\sn!}$.
For~${F,G\in\Om^0(M)}$ we denote by~$v_F$ the Hamiltonian vector 
field of~$F$ and by~${\{F,G\}:=\om(v_F,v_G)}$ the Poisson bracket.
Let~$\sJ(M,\om)$ be the space of all almost complex structures 
that are compatible with~$\om$, i.e.\ the bilinear 
form~${\inner{\cdot}{\cdot}:=\om(\cdot,J\cdot)}$
is a Riemannian metric.  This is an infinite-dimensional 
manifold whose tangent space at~${J\in\sJ(M,\om)}$ 
is given by
$$
T_J\sJ(M,\om) = \left\{\Jhat\in\Om^{0,1}_J(M,TM)\,\big|\,
\om(J\cdot,\Jhat\cdot)+\om(\Jhat\cdot,J\cdot)=0\right\}.
$$
Here the condition~${\om(J\cdot,\Jhat\cdot)+\om(\Jhat\cdot,J\cdot)=0}$
holds if and only if~$\Jhat$ is symmetric with respect to the Riemannian 
metric~${\inner{\cdot}{\cdot}=\om(\cdot,J\cdot)}$.  
In particular,~$\cL_vJ$ is symmetric for every symplectic vector field~$v$.
The symplectic form on~$\sJ(M,\om)$ is given by
\begin{equation}\label{eq:OMom}
\Om_{\om,J}(\Jhat_1,\Jhat_2) 
:= \tfrac{1}{2}\int_M\trace\bigl(\Jhat_1 J\Jhat_2\bigr)\frac{\om^\sn}{\sn!}
\end{equation}
for~${J\in\sJ(M,\om)}$ and~${\Jhat_i\in T_J\sJ(M,\om)}$
and the complex structure is~${\Jhat\mapsto-J\Jhat}$. 
With these structures~$\sJ(M,\om)$ is an 
infinite-dimen\-sio\-nal K\"ahler manifold.

The group~$\Symp(M,\om)$ of symplectomorphisms acts on~$\sJ(M,\om)$
by K\"ahler isometries. By a theorem of Donaldson~\cite{DON1},
the action of the subgroup~$\Ham(M,\om)$ of Hamiltonian 
symplectomorphisms on~$\sJ(M,\om)$ is a Hamiltonian group action 
and the scalar curvature appears as a moment map. 
Earlier versions of this result were proved by Quillen 
(for Riemann surfaces) and Fujiki~\cite{FUJIKI} (in the integrable case). 
Below we derive it as a direct consequence of Theorem~\ref{thm:RICCI}. 

To explain this, we first observe that the Lie algebra of the 
group~$\Ham(M,\om)$ is the space of Hamiltonian vector fields 
and hence can be identified with the quotient space~$\Om^0(M)/\R$.  
Its dual space can formally be thought of as the space~$\Om^0_\rho(M)$ 
of all functions~$f\in\Om^0(M)$ with mean value zero, 
in that every such function determines a continuous linear 
functional~${\Om^0(M)/\R\to\R:[H]\mapsto\int_MfH\rho}$.
Now the scalar curvature of an almost complex 
structure~${J\in\sJ(M,\om)}$ is defined by
\begin{equation}\label{eq:SCALAR}
S_{\om,J}\rho := 2\Ric_{\rho,J}\wedge\om^{\sn-1}/(\sn-1)!.
\end{equation}
Its mean value is the topological invariant
$
c_\om := \frac{4\pi}{V}\inner{c_1(\om)\cup\frac{[\om]^{\sn-1}}{(\sn-1)!}}{[M]},
$
${V:=\int_M\rho}$. 
In the integrable case $S_{\om,J}$ is the standard scalar curvature 
of the K\"ahler metric.

\begin{theorem}[{\bf Quillen--Fujiki--Donaldson}]\label{thm:SCALAR}
The action of~$\Ham(M,\om)$ on the space~$\sJ(M,\om)$ is Hamiltonian 
and is generated by the~$\Symp(M,\om)$-equivariant moment 
map~${\sJ(M,\om)\to\Om^0_\rho(M):J\mapsto S_{\om,J}-c_\om}$, i.e.\ 
\begin{equation}\label{eq:SCALARMOMENT}
\int_M\Shat_\om(J,\Jhat)H\frac{\om^\sn}{\sn!} 
= \Om_{\om,J}(\Jhat,\cL_{v_H}J)
\end{equation}
for all~${J\in\sJ(M,\om)}$, all~${\Jhat\in T_J\sJ(M,\om)}$, 
and all~${H\in\Om^0(M)}$, where
$$
\Shat_\om(J,\Jhat) := \left.\tfrac{d}{dt}\right|_{t=0}S_{\om,J_t}
$$
for any smooth path~${\R\to\sJ(M,\om):t\mapsto J_t}$ 
satisfying~${J_0=J}$ and~${\left.\tfrac{d}{dt}\right|_{t=0}J_t=\Jhat}$. 
\end{theorem}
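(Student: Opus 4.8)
The plan is to derive Theorem~\ref{thm:SCALAR} directly from Theorem~\ref{thm:RICCI} by restricting the moment map identity~\eqref{eq:RICCIMOMENT} from the big space~$\sJ_0(M)$ to the submanifold~$\sJ(M,\om)$ of $\om$-compatible almost complex structures, and by specializing the test forms~$\alpha\in\Om^{2\sn-2}(M)$ to those of the form~$\alpha=H\om^{\sn-1}/(\sn-1)!$ for~$H\in\Om^0(M)$. With this choice one has~${\iota(Y_\alpha)\rho=d\alpha=dH\wedge\om^{\sn-1}/(\sn-1)!={*}(dH\circ J)\cdot(\text{sign})}$, so that~$Y_\alpha=v_H$ is the Hamiltonian vector field of~$H$; this is exactly the computation already carried out in the proof of Lemma~\ref{le:HAM1}. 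Hence the right-hand side of~\eqref{eq:RICCIMOMENT} becomes~$\Om_{\rho,J}(\Jhat,\cL_{v_H}J)$, which on~$\sJ(M,\om)$ with~$\rho=\om^\sn/\sn!$ is precisely~$\Om_{\om,J}(\Jhat,\cL_{v_H}J)$ as in~\eqref{eq:OMom}; and the left-hand side becomes
$$
\int_M 2\Richat_\rho(J,\Jhat)\wedge H\frac{\om^{\sn-1}}{(\sn-1)!}
= \int_M H\cdot\Bigl(2\Richat_\rho(J,\Jhat)\wedge\frac{\om^{\sn-1}}{(\sn-1)!}\Bigr).
$$

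The next step is to identify the bracketed $2\sn$-form with~$\Shat_\om(J,\Jhat)\,\om^\sn/\sn!$. This is just the linearization of the defining relation~\eqref{eq:SCALAR}, namely~${S_{\om,J}\rho=2\Ric_{\rho,J}\wedge\om^{\sn-1}/(\sn-1)!}$, differentiated along a path~$J_t$ with~$\om$ fixed: since~$\om$ does not vary, only the Ricci form is differentiated and we get~${\Shat_\om(J,\Jhat)\rho=2\Richat_\rho(J,\Jhat)\wedge\om^{\sn-1}/(\sn-1)!}$. Substituting this back yields exactly~\eqref{eq:SCALARMOMENT}. The constant~$c_\om$ drops out of~\eqref{eq:SCALARMOMENT} because~$\int_M v_H$ acts trivially — more precisely, adding a constant to~$S_{\om,J}$ changes the left side by~$c_\om\int_M H\rho$, which is the pairing of~$[H]\in\Om^0(M)/\R$ against the constant, and this is absorbed by the usual normalization of the moment map; one remarks that~$S_{\om,J}-c_\om$ has mean value zero by the cohomological computation of~$c_\om$, so it genuinely lands in~$\Om^0_\rho(M)$. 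Equivariance under~$\Symp(M,\om)$ follows from the~$\Diff(M,\rho)$-equivariance in Theorem~\ref{thm:RICCI} together with~\eqref{eq:RICphi}, since a symplectomorphism preserves both~$\om$ and~$\rho$.

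There is one genuine point that needs care rather than a routine check: one must verify that the restriction of the Hamiltonian action to~$\sJ(M,\om)$ is again Hamiltonian, i.e. that~$\cL_{v_H}J$ is tangent to~$\sJ(M,\om)$ and that the restricted symplectic form is the correct one. Tangency is immediate from the remark in the text that~$\cL_vJ$ is symmetric for every symplectic vector field~$v$, so~$\cL_{v_H}J\in T_J\sJ(M,\om)$; and the restriction of~\eqref{eq:OM} to~$T_J\sJ(M,\om)$ is visibly~\eqref{eq:OMom}. One should also note that~$\sJ(M,\om)\subset\sJ_0(M)$ requires~$c_1^\R(J)$ to be represented — but in fact the moment map identity~\eqref{eq:RICCIMOMENT}, as emphasized after Theorem~\ref{thm:RICCI}, does not require the vanishing of the first Chern class, so the argument applies verbatim to all of~$\sJ(M,\om)$, not merely to its intersection with~$\sJ_0(M)$.

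The main obstacle, such as it is, is bookkeeping: getting the sign and the factor right in the identity~$d\alpha=\iota(v_H)\rho$ when~$\alpha=H\om^{\sn-1}/(\sn-1)!$ (which forces~${\iota(v_H)\om=dH}$ with the stated convention), and confirming that the factor~$2$ in~${J\mapsto2\Ric_{\rho,J}}$ matches the factor~$2$ in~\eqref{eq:SCALAR} so that no spurious constant survives. Once those conventions are pinned down the theorem is a one-line corollary of Theorem~\ref{thm:RICCI} via Proposition~\ref{prop:LAMBDA}, exactly as the text announces ("Below we derive it as a direct consequence of Theorem~\ref{thm:RICCI}").
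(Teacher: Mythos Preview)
Your proposal is correct and follows essentially the same approach as the paper's proof: differentiate the defining relation~\eqref{eq:SCALAR} to obtain~$\Shat_\om(J,\Jhat)\rho=2\Richat_\rho(J,\Jhat)\wedge\om^{\sn-1}/(\sn-1)!$, then apply the moment map identity~\eqref{eq:RICCIMOMENT} from Theorem~\ref{thm:RICCI} with~$\alpha=H\om^{\sn-1}/(\sn-1)!$ and~$Y_\alpha=v_H$. The paper's proof is a two-line version of exactly this; your additional remarks on tangency, equivariance, the role of~$c_\om$, and the non-necessity of~$c_1^\R=0$ are all accurate elaborations (though your closing reference to Proposition~\ref{prop:LAMBDA} is superfluous---only Theorem~\ref{thm:RICCI} is needed).
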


\begin{proof}
By~\eqref{eq:SCALAR} we have
\begin{equation*}
\begin{split}
\int_M\Shat_\om(J,\Jhat)H\frac{\om^\sn}{\sn!}
=
\int_M 2\Richat_\rho(J,\Jhat)\wedge H\frac{\om^{\sn-1}}{(\sn-1)!} 
= 
\tfrac{1}{2}\int_M\trace\bigl(\Jhat J\cL_{v_H}J\bigr)\frac{\om^\sn}{\sn!}.
\end{split}
\end{equation*}
The last equation follows from Theorem~\ref{thm:RICCI}
with~${Y_\alpha=v_H}$ and~${\alpha=H\tfrac{\om^{\sn-1}}{(\sn-1)!}}$.
\end{proof}

In the present situation one proves exactly as in Proposition~\ref{prop:PROPER}
that the action of the group~$\Ham(M,\om)$ on~$\sJ(M,\om)$ is proper.
Here the argument uses a theorem of Ono~\cite{ONO} which asserts 
that~$\Ham(M,\om)$ is a closed subgroup of~$\Symp(M,\om)$ with respect
to the $\Cinf$ topology.   Now the Marsden--Weinstein quotient is
the space of Hamiltonian isotopy classes of $\om$-compatible
almost complex structures with constant scalar curvature
given by
\begin{equation}\label{eq:WSCALAR}
\begin{split}
\sW_\csc(M,\om) 
&:= 
\sJ_\csc(M,\om)/\Ham(M,\om),\\
\sJ_\csc(M,\om) 
&:= 
\left\{J\in\sJ(M,\om)\,|\,S_{\om,J}=c_\om\right\}.
\end{split}
\end{equation}
The next result is the analogue of Proposition~\ref{prop:REGULAR}
in the present setting.

\begin{proposition}\label{prop:SREGULAR}
Fix an element~${J\in\sJ(M,\om)}$. 

\smallskip\noindent{\bf (i)}
Let~${f\in\Om^0(M)}$. Then there exists a~${\Jhat\in T_J\sJ(M,\om)}$ 
such that~${\Shat_\om(J,\Jhat)=f}$ if and only if~${\int_M fH\rho=0}$
for all~${H\in\Om^0(M)}$ with~${\cL_{v_H}J=0}$.

\smallskip\noindent{\bf (ii)}
Let~${\Jhat\in T_J\sJ(M,\om)}$. Then there exists an~${H\in\Om^0(M)}$ 
such that~${\cL_{v_H}J=\Jhat}$ if and only if~${\Om_{\om,J}(\Jhat,\Jhat')=0}$
for all~${\Jhat'\in T_J\sJ(M,\om)}$ with~${\Shat_\om(J,\Jhat')=0}$.
\end{proposition}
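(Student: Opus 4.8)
The plan is to deduce Proposition~\ref{prop:SREGULAR} from Proposition~\ref{prop:REGULAR}, in the same spirit in which Proposition~\ref{prop:SEREGULAR} was reduced to it. Necessity of the conditions in~(i) and~(ii) is immediate from~\eqref{eq:SCALARMOMENT}: if $\Shat_\om(J,\Jhat)=f$ and $\cL_{v_H}J=0$ then $\int_MfH\rho=\Om_{\om,J}(\Jhat,\cL_{v_H}J)=0$, and symmetrically for~(ii). For the two converse implications the reduction must bridge two gaps: between the Hamiltonian vector fields $v_H$ (which generate $\Ham(M,\om)$) and the exact divergence-free vector fields $Y_\alpha$ (which appear in Proposition~\ref{prop:REGULAR}), and between the scalar curvature variation $\Shat_\om(J,\Jhat)$ and the full Ricci form variation $\Richat_\rho(J,\Jhat)$. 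To this end I would first record two facts. Extend $\Shat_\om(J,\cdot)$ to all $\Jhat\in\Om^{0,1}_J(M,TM)$ by $\Shat_\om(J,\Jhat)\rho:=2\Richat_\rho(J,\Jhat)\wedge\om^{\sn-1}/(\sn-1)!$ and write $\Jhat^s:=\tfrac12(\Jhat+\Jhat^*)$. \emph{(a)} $\Shat_\om(J,\Jhat)=\Shat_\om(J,\Jhat^s)$: for antisymmetric $K$ anticommuting with $J$, writing $2\Richat_\rho(J,K)\wedge\om^{\sn-1}/(\sn-1)!=g\rho$ and applying Theorem~\ref{thm:RICCI} with $\alpha=H\om^{\sn-1}/(\sn-1)!$, $Y_\alpha=v_H$, gives $\int_MgH\rho=\Om_{\rho,J}(K,\cL_{v_H}J)=\tfrac12\int_M\trace(KJ\cL_{v_H}J)\rho=0$ for every $H$, since $J\cL_{v_H}J$ is symmetric (because $\cL_{v_H}J$ is symmetric and anticommutes with $J$) and $\trace$ of an antisymmetric times a symmetric endomorphism vanishes; hence $g\equiv0$. \emph{(b)} $\Om_{\rho,J}(\Jhat,\Jhat')=\Om_{\om,J}(\Jhat,(\Jhat')^s)$ for symmetric $\Jhat$ and arbitrary $\Jhat'\in\Om^{0,1}_J(M,TM)$, by a short trace computation, because $\Jhat J$ is symmetric and therefore kills the antisymmetric part of $\Jhat'$. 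Finally, Lemmas~\ref{le:HAM1}--\ref{le:HAM3} are available in the present setting: their proofs use only that $\om$ is symplectic, via the pointwise isomorphism $\lambda\mapsto\lambda\wedge\om^{\sn-1}/(\sn-1)!$ from $1$-forms to $(2\sn-1)$-forms, not the Lefschetz condition.

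For part~(i), given $f$ with $\int_MfH\rho=0$ whenever $\cL_{v_H}J=0$, taking $H=1$ shows $\int_Mf\rho=0$, so $\tfrac12f\rho$ is exact; writing it as $d\lambda_0\wedge\om^{\sn-1}/(\sn-1)!$ for a $1$-form $\lambda_0$ furnished by the pointwise isomorphism above, I next apply Lemma~\ref{le:HAM2} to a finite-dimensional complement $\sY_0$ of $\{v_H\mid\cL_{v_H}J=0\}$ inside the (finite-dimensional) space of $J$-holomorphic exact divergence-free vector fields; this $\sY_0$ contains no nonzero Hamiltonian vector field, so the lemma produces a $1$-form $\mu$ with $d\mu\wedge\om^{\sn-1}=0$ such that $\lambda':=\lambda_0-\mu$ satisfies $\int_Md\lambda'\wedge\alpha=0$ for every $\alpha\in\Om^{2\sn-2}(M)$ with $\cL_{Y_\alpha}J=0$ — the Hamiltonian part of this orthogonality is exactly the hypothesis on $f$, while the $\sY_0$-part is arranged by $\mu$. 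Proposition~\ref{prop:REGULAR}(i) then yields $\Jhat_0\in\Om^{0,1}_J(M,TM)$ with $\Richat_\rho(J,\Jhat_0)=d\lambda'$, and $\Jhat:=\Jhat_0^s\in T_J\sJ(M,\om)$ satisfies $\Shat_\om(J,\Jhat)\rho=2\Richat_\rho(J,\Jhat_0)\wedge\om^{\sn-1}/(\sn-1)!=2\,d\lambda'\wedge\om^{\sn-1}/(\sn-1)!=f\rho$ by~(a).

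For part~(ii), given $\Jhat\in T_J\sJ(M,\om)$ with $\Om_{\om,J}(\Jhat,\Jhat')=0$ for all $\Jhat'\in T_J\sJ(M,\om)$ with $\Shat_\om(J,\Jhat')=0$, I would first verify the hypothesis of Proposition~\ref{prop:REGULAR}(ii): if $\Jhat'\in\Om^{0,1}_J(M,TM)$ has $\Richat_\rho(J,\Jhat')=0$, then $\Shat_\om(J,(\Jhat')^s)=0$ by~(a), hence $\Om_{\om,J}(\Jhat,(\Jhat')^s)=0$, and then $\Om_{\rho,J}(\Jhat,\Jhat')=0$ by~(b). Proposition~\ref{prop:REGULAR}(ii) gives $\alpha\in\Om^{2\sn-2}(M)$ with $\cL_{Y_\alpha}J=\Jhat$; to replace $Y_\alpha$ by a Hamiltonian vector field I verify condition~(b) of Lemma~\ref{le:HAM3}. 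For $\lambda$ with $d\lambda\wedge\om^{\sn-1}=0$ and $\int_Md\lambda\wedge\beta=0$ for all $\beta$ with $\cL_{Y_\beta}J=0$, Proposition~\ref{prop:REGULAR}(i) produces $\Jhat'$ with $\Richat_\rho(J,\Jhat')=d\lambda$, so $\int_Md\lambda\wedge\alpha=\tfrac12\Om_{\rho,J}(\Jhat',\cL_{Y_\alpha}J)=\tfrac12\Om_{\rho,J}(\Jhat',\Jhat)$ by Theorem~\ref{thm:RICCI}; since $d\lambda\wedge\om^{\sn-1}=0$ forces $\Shat_\om(J,\Jhat')=0$, observations~(a) and~(b) together with the hypothesis give $\Om_{\rho,J}(\Jhat',\Jhat)=-\Om_{\rho,J}(\Jhat,\Jhat')=-\Om_{\om,J}(\Jhat,(\Jhat')^s)=0$, so $\int_Md\lambda\wedge\alpha=0$. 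Hence Lemma~\ref{le:HAM3} produces $H$ with $\cL_{v_H}J=\cL_{Y_\alpha}J=\Jhat$.

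The step I expect to require the most care is observation~(a) and its use: the scalar curvature moment map sees neither the antisymmetric part of an infinitesimal variation $\Jhat$ nor the part of $\Richat_\rho(J,\Jhat)$ that is primitive with respect to $\om$, and one must check that these lost degrees of freedom do not obstruct surjectivity — this is precisely what the combination of the pointwise Lefschetz isomorphism, Lemmas~\ref{le:HAM2}--\ref{le:HAM3}, and identity~(b) is designed to control. A more self-contained alternative avoids Proposition~\ref{prop:REGULAR} altogether: identity~\eqref{eq:SCALARMOMENT} exhibits $\Jhat\mapsto\Shat_\om(J,\Jhat)$ as, up to the bundle automorphism $\Jhat\mapsto-J\Jhat$ of $T_J\sJ(M,\om)$, the formal $L^2$-adjoint of the overdetermined elliptic operator $H\mapsto\cL_{v_H}J=2J\bar\p_Jv_H$, and both parts of the proposition then follow from the Hodge-type decompositions of $\Om^0(M)$ and $T_J\sJ(M,\om)$ in suitable Sobolev completions, with elliptic regularity restoring smoothness.
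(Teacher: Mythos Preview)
Your proposal is correct, and for part~(ii) it is essentially the paper's proof; in fact you are more careful than the paper, which invokes Proposition~\ref{prop:REGULAR}(ii) and then writes ``$\Om_{\om,J}(\Jhat',\Jhat)=0$'' without comment, tacitly using exactly your observations~(a) and~(b) to pass from the hypothesis on symmetric $\Jhat'$ to arbitrary $\Jhat'\in\Om^{0,1}_J(M,TM)$.

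For part~(i), however, your main route differs from the paper's. You reduce to Proposition~\ref{prop:REGULAR}(i) via Lemma~\ref{le:HAM2} and the symmetrization trick~(a), mirroring the proof of Proposition~\ref{prop:SEREGULAR}(i). The paper instead takes the direct elliptic route you sketch only as an afterthought: it introduces the fourth-order self-adjoint Fredholm operator $\bL F:=\Shat_\om(J,J\cL_{v_F}J)$, shows via~\eqref{eq:SCALARMOMENT} that $\int_M(\bL F)G\rho=\tfrac12\int_M\trace((\cL_{v_F}J)(\cL_{v_G}J))\rho$, reads off $\ker\bL=\{H:\cL_{v_H}J=0\}$, and concludes that the orthogonality hypothesis places $f$ in $\im\bL=(\ker\bL)^\perp$, so $\Jhat=J\cL_{v_F}J$ works. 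The paper's argument is shorter and yields the explicit formula~\eqref{eq:LS1} for $\bL$, which is reused in Proposition~\ref{prop:HARMONIC}; your argument has the virtue of being uniform with the symplectic-Einstein case and of isolating the two structural facts~(a),~(b) that also underpin part~(ii).
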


\begin{proof}
That the conditions in~(i) and~(ii) are necessary follows  
from~\eqref{eq:SCALARMOMENT}.   
To prove the converse, define the operator~${\bL:\Om^0(M)\to\Om^0(M)}$ by 
\begin{equation}\label{eq:LS1}
\begin{split}
\bL F
&:= 
\Shat_\om(J,J\cL_{v_F}J) 
=
-d^*\bigl(\Lambda_\rho(J,J\cL_{v_F}J)\circ J\bigr) \\
&\phantom{:}=
d^*dd^*dF  - 2d^*\bigl(\iota(Jv_F)\Ric_{\rho,J}\circ J\bigr)
+ d^*\bigl(\Lambda_\rho\bigl(J,N_J(v_F,\cdot)\bigr)\circ J\bigr)
\end{split}
\end{equation}
for~${F\in\Om^0(M)}$.  Here the last equality follows by 
a calculation which uses the identities~${f_{Jv_F}=-d^*dF}$ 
and~${N_J(u,v)=J(\cL_vJ)u-(\cL_{Jv}J)u}$ for the Nijenhuis tensor.
The operator~$\bL$ is a fourth order self-adjoint Fredholm operator 
and, by~\eqref{eq:SCALARMOMENT},
\begin{equation}\label{eq:LS2}
\begin{split}
\int_M(\bL F)G\rho = \tfrac{1}{2}\int_M\trace\bigl((\cL_{v_F}J)(\cL_{v_G}J)\bigr)\rho
\end{split}
\end{equation}
for all~${F,G\in\Om^0(M)}$.  
Thus~${H\in\ker\bL}$ if and only if~${\cL_{v_H}J=0}$. 
Hence, if~${f\in\Om^0(M)}$ satisfies~${\int_MfH\rho=0}$ for 
all~$H$ with~${\cL_{v_H}J=0}$, then~${f\in\im\bL}$,
and this proves~(i).

To prove part~(ii), assume that~${\Jhat\in T_J\sJ(M,\om)}$ 
satisfies~${\Om_{\om,J}(\Jhat,\Jhat')=0}$ for all~${\Jhat'\in T_J\sJ(M,\om)}$
with~${\Shat_\om(J,\Jhat')=0}$.
Then, by part~(ii) of Proposition~\ref{prop:REGULAR}, 
there exists an~$\alpha$ such that~${\cL_{Y_\alpha}J=\Jhat}$.
Now let~${\lambda\in\Om^1(M)}$ such 
that~${d\lambda\wedge\om^{\sn-1} = 0}$
and~${\int_Md\lambda\wedge\beta=0}$ 
for all~${\beta\in\Om^{2\sn-2}(M)}$ with~${\cL_{Y_\beta}J=0}$.  
Choose~${\Jhat'\in\Om^{0,1}_J(M,TM)}$ 
with~${\Richat_\rho(J,\Jhat')=d\lambda}$
by part~(i) of Proposition~\ref{prop:REGULAR}.
Then~$\Shat_\om(J,\Jhat')=0$ and hence
$$
2\int_Md\lambda\wedge\alpha
= \int_M2\Richat_\rho(J,\Jhat')\wedge\alpha
= \Om_{\rho,J}(\Jhat',\cL_{Y_\alpha}J) 
= \Om_{\om,J}(\Jhat',\Jhat) 
= 0.
$$
Thus by Lemma~\ref{le:HAM3} there exists an~${H\in\Om^0(M)}$ 
such that~${\cL_{v_H}J=\cL_{Y_\alpha}J=\Jhat}$. 
\end{proof}

\begin{proposition}\label{prop:HARMONIC}
Let~${J\in\sJ(M,\om)}$ and let~${\Jhat\in T_J\sJ(M,\om)}$.  
Then there exists a function~${H\in\Om^0(M)}$
such that~${\Shat_\om(J,\Jhat-J\cL_{v_H}J)=0}$.
Moreover,~$\cL_{v_H}J$ is uniquely determined by this condition.
\end{proposition}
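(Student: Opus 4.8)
The plan is to solve the equation $\bL H = \Shat_\om(J,\Jhat)$ for $H\in\Om^0(M)$, where $\bL$ is the fourth-order self-adjoint Fredholm operator introduced in the proof of Proposition~\ref{prop:SREGULAR}, characterised by $\bL F=\Shat_\om(J,J\cL_{v_F}J)$ and by equation~\eqref{eq:LS2}. Since $\Shat_\om(J,J\cL_{v_H}J)=\bL H$, the desired identity $\Shat_\om(J,\Jhat-J\cL_{v_H}J)=0$ is exactly $\bL H=\Shat_\om(J,\Jhat)$, using linearity of $\Jhat'\mapsto\Shat_\om(J,\Jhat')$.

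First I would check the solvability condition. By part~(i) of Proposition~\ref{prop:SREGULAR}, the function $f:=\Shat_\om(J,\Jhat)$ lies in the image of $\bL$ if and only if $\int_M f H'\rho=0$ for every $H'\in\Om^0(M)$ with $\cL_{v_{H'}}J=0$. But for such $H'$ we have, by~\eqref{eq:SCALARMOMENT},

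\begin{equation*}
\int_M\Shat_\om(J,\Jhat)H'\frac{\om^\sn}{\sn!}
= \Om_{\om,J}(\Jhat,\cL_{v_{H'}}J) = 0,
\end{equation*}

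since $\cL_{v_{H'}}J=0$. Hence $f\in\im\bL$, so a solution $H$ exists. Equivalently one can phrase this directly via self-adjointness of $\bL$ and the fact that $\ker\bL=\{H'\mid\cL_{v_{H'}}J=0\}$ (established in the proof of Proposition~\ref{prop:SREGULAR}): $f\perp\ker\bL$ implies $f\in\im\bL=(\ker\bL)^\perp$.

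For uniqueness of $\cL_{v_H}J$, suppose $H_1,H_2$ both work. Then $\bL(H_1-H_2)=0$, so $H_1-H_2\in\ker\bL$, and by~\eqref{eq:LS2} with $F=G=H_1-H_2$ this forces $\cL_{v_{H_1-H_2}}J=0$, i.e.\ $\cL_{v_{H_1}}J=\cL_{v_{H_2}}J$. The main (and essentially only) obstacle is the elliptic functional-analytic setup: one must know $\bL$ is Fredholm and self-adjoint on the appropriate Sobolev completions so that $\im\bL=(\ker\bL)^\perp$ in $L^2$ and elliptic regularity promotes the resulting weak solution back to a smooth $H$. This is precisely the content already invoked for $\bL$ in the proof of Proposition~\ref{prop:SREGULAR}, so here it can simply be cited.
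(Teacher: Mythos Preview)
Your proof is correct and follows essentially the same route as the paper's: both reduce the problem to solving $\bL H = f$ with $f = \Shat_\om(J,\Jhat)$ (the paper writes $f$ via $\Lambda_\rho$, but this is the same function by~\eqref{eq:LAMBDA1} and~\eqref{eq:SCALAR}), verify $f\perp\ker\bL$ using the moment map identity~\eqref{eq:SCALARMOMENT}, and conclude $f\in\im\bL$ by self-adjointness and the Fredholm property of~$\bL$. One minor quibble: your first appeal to Proposition~\ref{prop:SREGULAR}(i) is not literally what that statement says (it characterises the image of $\Jhat'\mapsto\Shat_\om(J,\Jhat')$, not of $\bL$), but your subsequent direct argument via $\im\bL=(\ker\bL)^\perp$ is exactly right and is what the paper uses.
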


\begin{proof}
Define~${f\in\Om^0(M)}$ by~${f\rho:=d\Lambda_\rho(J,\Jhat)\wedge\om^{\sn-1}/(\sn-1)!}$
and let~$\bL$ be as in Proposition~\ref{prop:SREGULAR}.  Then
$
(\bL H)\rho = d\Lambda_\rho(J,J\cL_{v_H}J)\wedge\om^{\sn-1}/(\sn-1)!
$
and, by~\eqref{eq:SCALARMOMENT},
\begin{equation*}
\begin{split}
\int_MfH\rho
= 
\int_Md\Lambda_\rho(J,\Jhat)\wedge H\frac{\om^{\sn-1}}{(\sn-1)!} 
= 
\tfrac{1}{2}\int_M\trace\bigl(\Jhat J\cL_{v_H}J\bigr)\rho
=
0
\end{split}
\end{equation*}
for all~${H\in\ker\bL}$.  Thus~$f$ belongs to the image of~$\bL$. 
\end{proof}

Call an almost complex structure~${J\in\sJ(M,\om)}$ {\bf regular} 
if there are no nonzero $J$-holomorphic Hamiltonian vector fields.
By part~(i) of Proposition~\ref{prop:SREGULAR}
$J$ is regular if and only if the map
$
T_J\sJ(M,\om)\to\Om^0_\rho(M):\Jhat\mapsto\Shat_\om(J,\Jhat)
$
is surjective. Hence, since the action is proper, 
it follows again from a suitable local slice theorem that
the regular part of the quotient~$\sW_\csc(M,\om)$ is a K\"ahler orbifold.
It is infinite-dimensional when~${\dim(M)>2}$,
and its tangent space at the equivalence class 
of a regular element~${J\in\sJ_\csc(M,\om)}$ is the quotient
\begin{equation}\label{eq:TWom}
\begin{split}
T_{[J]_\om}\sW_\csc(M,\om) 
&= \frac{\left\{\Jhat\in\Om^{0,1}_J(M,TM)\,|\,
\Jhat=\Jhat^*,\,\Shat_\om(J,\Jhat)=0\right\}}
{\left\{\cL_{v_H}J\,|\,H\in\Om^0(M)\right\}}.
\end{split}
\end{equation} 
The $2$-form~\eqref{eq:OMom} is nondegenerate on this quotient 
by part~(ii) of Proposition~\ref{prop:SREGULAR} 
and the complex structure is given 
by~${[\Jhat]_\om\mapsto[-J(\Jhat-\cL_{v_H}J)]_\om}$,
where~$H$ is chosen as in Proposition~\ref{prop:HARMONIC} 
so that~${\Shat_\om(J,J\Jhat-J\cL_{v_H}J)=0}$.

\begin{corollary}\label{cor:MATSUSHIMA}
Let~${J\in\sJ(M,\om)}$ and~${F,G\in\Om^0(M)}$. 
Then
\begin{equation}\label{eq:OMSFG}
\Om_{\rho,J}(\cL_{v_F}J,\cL_{v_G}J) 
= \int_MS_{\om,J}\{F,G\}\frac{\om^\sn}{\sn!}.
\end{equation}
In particular, if~${S_{\om,J}=c_\om}$, then the $L^2$-norm 
of the endomorphism~$\cL_{v_F}J+J\cL_{v_G}J$ is given by
${\Norm{\cL_{v_F}J+J\cL_{v_G}J}^2 
= \Norm{\cL_{v_F}J}^2+\Norm{\cL_{v_G}J}^2}$.
\end{corollary}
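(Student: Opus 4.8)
The plan is to deduce~\eqref{eq:OMSFG} directly from the moment map identity~\eqref{eq:SCALARMOMENT}. Since~$v_F$ is symplectic, $\cL_{v_F}J$ is symmetric and hence lies in~$T_J\sJ(M,\om)$; moreover it is the velocity at~${t=0}$ of the path~${t\mapsto\phi_t^*J}$ in~$\sJ(M,\om)$, where~$\phi_t$ is the flow of~$v_F$ (note~${\phi_t^*J\in\sJ(M,\om)}$ because~${\phi_t^*\om=\om}$). Taking~${\Jhat=\cL_{v_F}J}$ and~${H=G}$ in~\eqref{eq:SCALARMOMENT}, and using that~$\Om_{\rho,J}$ and~$\Om_{\om,J}$ agree because~${\rho=\om^\sn/\sn!}$, reduces the statement to evaluating the derivative~$\Shat_\om(J,\cL_{v_F}J)$.

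The key step is to show that~${\Shat_\om(J,\cL_{v_F}J)=\{S_{\om,J},F\}}$. Since the Hamiltonian flow~$\phi_t$ of~$v_F$ preserves~$\om$ and~${\rho=\om^\sn/\sn!}$, the naturality~\eqref{eq:RICphi} of the Ricci form together with the definition~\eqref{eq:SCALAR} of the scalar curvature gives~${S_{\om,\phi_t^*J}=\phi_t^*S_{\om,J}=S_{\om,J}\circ\phi_t}$. Differentiating at~${t=0}$ yields~${\Shat_\om(J,\cL_{v_F}J)=dS_{\om,J}(v_F)=\{S_{\om,J},F\}}$. Substituting into~\eqref{eq:SCALARMOMENT} we obtain
\[
\Om_{\om,J}(\cL_{v_F}J,\cL_{v_G}J)=\int_M\{S_{\om,J},F\}\,G\,\frac{\om^\sn}{\sn!},
\]
and a single integration by parts (using~${d\om=0}$) gives the cyclic identity~${\int_M\{A,B\}\,C\,\rho=\int_M A\,\{B,C\}\,\rho}$ for functions~${A,B,C}$ on the closed manifold~$M$, which turns the right-hand side into~${\int_M S_{\om,J}\{F,G\}\,\om^\sn/\sn!}$. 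This is~\eqref{eq:OMSFG}.

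For the last assertion, assume~${S_{\om,J}=c_\om}$. Then~\eqref{eq:OMSFG} gives~${\Om_{\rho,J}(\cL_{v_F}J,\cL_{v_G}J)=c_\om\int_M\{F,G\}\,\om^\sn/\sn!=0}$, since the integral of a Poisson bracket over a closed symplectic manifold vanishes. Expanding,
\[
\Norm{\cL_{v_F}J+J\cL_{v_G}J}^2
=\Norm{\cL_{v_F}J}^2+2\inner{\cL_{v_F}J}{J\cL_{v_G}J}+\Norm{J\cL_{v_G}J}^2 .
\]
Because~$J$ is a pointwise isometry and~${(J\Jhat)^2=\Jhat^2}$ for every anti-linear~$\Jhat$, we have~${\Norm{J\cL_{v_G}J}^2=\Norm{\cL_{v_G}J}^2}$; and on the K\"ahler manifold~$\sJ(M,\om)$ the~$L^2$ metric, the symplectic form~$\Om_{\om,J}$, and the complex structure~${\Jhat\mapsto-J\Jhat}$ satisfy~${\inner{\Jhat_1}{J\Jhat_2}=\Om_{\om,J}(\Jhat_1,\Jhat_2)}$, so the cross term equals~${2\Om_{\om,J}(\cL_{v_F}J,\cL_{v_G}J)=0}$. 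Combining the three terms yields the Pythagorean identity.

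The computation is essentially formal given Theorem~\ref{thm:SCALAR}; the only point needing care is the identification~${\Shat_\om(J,\cL_{v_F}J)=\{S_{\om,J},F\}}$, i.e.\ that the Hamiltonian flow of~$v_F$ transports~$S_{\om,J}$ by pull-back. This is immediate from~\eqref{eq:RICphi} and the fact that~$v_F$ preserves~$\rho$, so there is no genuine obstacle; the cyclic Poisson identity, the vanishing of~${\int_M\{F,G\}\,\rho}$, and the expansion of the norm are routine.
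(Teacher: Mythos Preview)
Your proof is correct and follows essentially the same route as the paper's: derive~\eqref{eq:ShatF} from the equivariance~${S_{\om,\phi_t^*J}=S_{\om,J}\circ\phi_t}$, then plug~${\Jhat=\cL_{v_F}J}$ and~${H=G}$ into the moment map identity~\eqref{eq:SCALARMOMENT}. You spell out two steps the paper leaves implicit---the cyclic Poisson identity~${\int_M\{S,F\}G\,\rho=\int_M S\{F,G\}\,\rho}$ and the expansion of the norm for the ``in particular'' clause---but these are routine and your treatment of them is accurate.
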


\begin{proof}
Let~$\phi_t$ be the flow of~$v_F$.  
Then~${S_{\om,\phi_t^*J}=S_{\om,J}\circ\phi_t}$ for all~$t$ 
and hence differentiation with respect to~$t$ yields the identity
\begin{equation}\label{eq:ShatF}
\Shat_\om(J,\cL_{v_F}J)=\{S_{\om,J},F\}.
\end{equation}
Insert equation~\eqref{eq:ShatF} into~\eqref{eq:SCALARMOMENT}  
with~${\Jhat=\cL_{v_F}J}$ and~${H=G}$ to obtain~\eqref{eq:OMSFG}.
\end{proof}

Let~$(M,\om,J)$ be a closed K\"ahler manifold with
constant scalar curvature such that~${H^1(M;\R)=0}$.
Then every holomorphic vector field is the sum 
of a Hamiltonian and a gradient vector field 
by~\cite[Lemma~3.7(ii)]{GPST},
and for all~${F,G\in\Om^0(M)}$ we
have~${\Norm{\cL_{v_F+Jv_G}J}^2
=\Norm{\cL_{v_F}J}^2+\Norm{\cL_{v_G}J}^2}$
by Corollary~\ref{cor:MATSUSHIMA}.
Hence the Lie algebra of holomorphic vector fields is the complexification 
of the Lie algebra of Killing fields and is therefore reductive.
This is the content of {\bf Matsushima's Theorem}.

\begin{remark}\label{rmk:NOTRICCIFLAT}
Non-integrable almost complex structures in~$\sJ(M,\om)$
with vanishing scalar curvature need not be Ricci-flat.
To see this, assume~${\dim(M)\ge4}$ 
and~${H^1(M;\R)=0}$, and that
there exists a~${J\in\sJ_\INT(M,\om)}$ 
such that~${\Ric_{\rho,J}=0}$ with~${\rho:=\om^\sn/\sn!}$.  
Then there exists a~${\Jhat\in\Om^{0,1}_J(M,TM)}$ such that
\begin{equation}\label{eq:NOTRICCIFLAT}
\Jhat=\Jhat^*,\qquad
\Shat_\om(J,\Jhat)=0,\qquad 
\Richat_\rho(J,\Jhat)\ne 0.
\end{equation}
Namely, choose a nonzero $1$-form~$\lambdahat$ 
such that~${d^*\lambdahat=0}$ and~${d^*(\lambdahat\circ J)=0}$.
Then~$\lambdahat$ is not closed and there is no nonzero $J$-holomorphic 
vector field by~\cite[Lemma~3.9]{GPST}. 
Thus, by~\eqref{eq:WB}  with~${\Ric_{\rho,J}=0}$
the operator~${X\mapsto\cL_XJ+(\cL_XJ)^*}$ is injective 
and hence, by the closed image theorem
the dual operator~${\Jhat=\Jhat^*\mapsto\bar\p_J^*\Jhat}$ is surjective. 
Thus, by Remark~\ref{rmk:LAMBDA} there exists 
a~${\Jhat=\Jhat^*\in\Om^{0,1}_J(M,TM)}$
such that~${\Lambda_\rho(J,\Jhat)=\lambdahat}$.
This implies~${\Shat_\om(J,\Jhat)=-d^*(\lambdahat\circ J)}$
and~${2\Richat_\rho(J,\Jhat)=d\lambdahat}$,
so~$\Jhat$ satisfies~\eqref{eq:NOTRICCIFLAT}.

By~\eqref{eq:NOTRICCIFLAT} there exists a smooth 
curve~${\R\to\sJ(M,\om):t\mapsto J_t}$ such that~${J_0=J}$ 
and $\p_t|_{t=0}J_t=\Jhat$ and $S_{\om,J_t}=0$ for all~$t$.  
Since $\Richat_\rho(J,\Jhat)\ne0$, this curve also 
satisfies~${\Ric_{\rho,J_t}\ne 0}$ for small nonzero~$t$. 
Note that~${\Richat_\rho(J,\Jhat)}$ is not a $(1,1)$-form,
hence~${\bar\p_J\Jhat\ne0}$ by~\cite[Lemma~3.6]{GPST},
and so~$J_t$ is not integrable for small nonzero~$t$.
\end{remark}

%%%%%%%%%%%%%%%%%%%%%%%%%%%%%%%%
%%%%%%%% Section 2 %%%%%%%%%%%%%
%%%%%%%%%%%%%%%%%%%%%%%%%%%%%%%%

\section{Teichm\"uller spaces}\label{sec:TEICH}

In this section we consider integrable complex structures and 
examine the Teich\-m\"uller spaces of Calabi--Yau structures,
of K\"ahler--Einstein structures, 
and of constant scalar curvature K\"ahler metrics.
Such Teichm\"uller spaces have been studied by many authors, see 
e.g.~\cite{BCS,CATANESE,FUJIKI,FS1,FS2,GHS,HUY,KODAIRA,
KOISO,M,N,S3,S4,SIU2,TIAN,TODOROV,VIEHWEG} 
and the references therein.  
The regular part of each Teichm\"uller space 
is a finite-dimensional symplectic submanifold 
of the relevant symplectic quotient in Section~\ref{sec:RICCI}.
It thus acquires a natural symplectic structure
that descends to the Weil--Petersson form on the corresponding moduli space 
(the quotient of Teichm\"uller space by the mapping class group).

In our formulation the ambient manifold~$M$ is fixed 
and the Weil--Petersson form arises via symplectic reduction 
of a Hamiltonian group action by an infinite-dimensional group 
on an infinite-dimensional space with a finite-dimensional quotient.
In the original algebro-geometric approach the moduli space
is directly characterized in finite-dimensional terms via a Torelli
type theorem and the Weil--Petersson form arises from the natural
homogeneous symplectic form on the relevant period domain.

It seems to be an open question whether there exist closed K\"ahler manifolds
that admit holomorphic diffeomorphisms that are smoothly isotopic to the identity,
but not through holomorphic diffeomorphisms. (For nonK\"ahler examples see~\cite{M}.)
If they do exist, then the regular parts of the Teichm\"uller
spaces examined here are orbifolds rather than manifolds.
When discussing Teichm\"uller spaces as manifolds,
we tacitly assume that such automorphisms do not exist,
as is the case for Riemann surfaces.

%%%%%%%%%%%%%%%%%%%%%%%%%%%%%%%%
%%%%%%%% Subsection 2.1 %%%%%%%%
%%%%%%%%%%%%%%%%%%%%%%%%%%%%%%%%

\subsection{The Teichm\"uller space of Calabi--Yau structures}
\label{subsec:TEICHCY}
Let~$M$ be a closed connected oriented $2\sn$-manifold.
Then the {\bf Teichm\"uller space of Calabi--Yau structures} 
on~$M$ is the space of isotopy classes of complex 
structures with real first Chern class zero and 
nonempty K\"ahler cone.  It is denoted by
\begin{equation}\label{eq:TEICH}
\begin{split}
\sT_0(M) 
&:= \sJ_{\INT,0}(M)/\Diff_0(M),\\ 
\sJ_{\INT,0}(M)
&:=
\bigl\{J\in\sJ_\INT(M)\,\big|\,
c_1^\R(J)=0\mbox{ and }J\mbox{ admits a K\"ahler form}
\bigr\}.
\end{split}
\end{equation}
Associated to a complex structure~${J\in\sJ_{\INT,0}(M)}$ 
there is the Dolbeault complex
\begin{equation}\label{eq:DOLBEAULT}
\Om^0(M,TM)\stackrel{\bar\p_J}{\longrightarrow}
\Om^{0,1}_J(M,TM)\stackrel{\bar\p_J}{\longrightarrow}
\Om^{0,2}_J(M,TM),
\end{equation}
where the first operator corresponds to the infinitesimal action 
of the vector fields on~${T_J\sJ(M)=\Om^{0,1}_J(M,TM)}$
by Remark~\ref{rmk:LAMBDA}, and the second operator 
corresponds to the derivative of the map which assigns to an
almost complex structure~$J$ its Nijenhuis tensor~$N_J$
by~\cite[(3.2)]{GPST}.  Thus the tangent space 
of the Teichm\"uller space~$\sT_0(M)$ at the equivalence class 
of an element~${J\in\sJ_{\INT,0}(M)}$ can formally be identified
with the cohomology of the Dolbeault complex~\eqref{eq:DOLBEAULT}, i.e.\ 
\begin{equation}\label{eq:TTEICH}
T_{[J]}\sT_0(M) 
= \frac{\ker(\bar\p_J:\Om^{0,1}_J(M,TM)\to\Om^{0,2}_J(M,TM))}
{\im(\bar\p_J:\Om^0(M,TM)\to\Om^{0,1}_J(M,TM))}.
\end{equation}
The proof requires a local slice theorem for the action of the 
diffeomorphism group on the space of integrable complex structures. 

For every~${J\in\sJ_{\INT,0}(M)}$ the space of holomorphic vector fields 
is isomorphic to the space of harmonic $1$-forms by~\cite[Lemma~3.9]{GPST}.
Moreover, the Bogomolov--Tian--Todorov theorem asserts
that the obstruction class vanishes~\cite{BOGOMOLOV,TIAN,TODOROV},
so the cohomology of the Dolbeault complex~\eqref{eq:DOLBEAULT}
has constant dimension, and that~$\sT_0(M)$ is indeed 
a smooth manifold whose tangent space at the equivalence class 
of~${J\in\sJ_{\INT,0}(M)}$ is the cohomology group~\eqref{eq:TTEICH}.
The Teichm\"uller space is in general not Hausdorff, even for the K3 surface
(see~\cite{GHS,VERBITSKY1} and also Example~\ref{ex:K3}).
For hyperK\"ahler manifolds the Teichm\"uller space becomes 
Hausdorff after identifying inseparable complex structures 
(see Verbitsky~\cite{VERBITSKY1,VERBITSKY2})
which are biholomorphic by a theorem of Huybrechts~\cite{HUY2}.

Now fix a positive volume form~${\rho\in\Om^{2\sn}(M)}$.  
Then another description of the Teichm\"uller space
of Calabi--Yau structures is as the quotient
\begin{equation}\label{eq:TEICHRHO}
\begin{split}
\sT_0(M,\rho)
&:=
\sJ_{\INT,0}(M,\rho)/\Diff_0(M,\rho)
= \sJ_{\INT,0}(M,\rho)/\Diff^\ex(M,\rho), \\
\sJ_{\INT,0}(M,\rho)
&:=
\bigl\{J\in\sJ_{\INT,0}(M)\,|\,\Ric_{\rho,J}=0\bigr\}.
\end{split}
\end{equation}
Here the two quotients agree
because the quotient group~$\Diff_0(M,\rho)/\Diff^\ex(M,\rho)$
acts trivially on~${\sJ_{\INT,0}(M,\rho)/\Diff^\ex(M,\rho)}$
(see~\cite[Lemma~3.9]{GPST}).  
The tangent space of~$\sT_0(M,\rho)$ at the equivalence class 
of~$J\in\sJ_{\INT,0}(M,\rho)$ is the quotient
\begin{equation}\label{eq:TTEICHRHO}
T_{[J]_\rho}\sT_0(M,\rho) 
= \frac{\bigl\{\Jhat\in\Om^{0,1}_J(M,TM)\,|\,
\bar\p_J\Jhat=0,\,\Richat_\rho(J,\Jhat)=0\bigr\}}
{\bigl\{\cL_{Y_\alpha}J\,|\,\alpha\in\Om^{2\sn-2}(M)\bigr\}}.
\end{equation}
The inclusion~${\iota_\rho:\sT_0(M,\rho)\to\sT_0(M)}$
is a diffeomorphism by~\cite[Lemma~4.2]{GPST}.
The proof uses Moser isotopy and the formula~\eqref{eq:RICf}.
The Teichm\"uller space~$\sT_0(M,\rho)$ is a submani\-fold of
the infinite-dimensional symplectic quotient~$\sW_0(M,\rho)$ 
in~\eqref{eq:WZERO} and it turns out that the $2$-form~\eqref{eq:OM} 
descends to a symplectic form on~$\sT_0(M,\rho)$
and thus induces a symplectic form on~$\sT_0(M)$.
An explicit formula for this symplectic form 
relies on the following two observations.
First, it follows from~\eqref{eq:RICf} that,
for every~${J\in\sJ_{\INT,0}(M)}$, there exists a unique 
positive volume form~${\rho_J\in\Om^{2\sn}(M)}$ that satisfies
\begin{equation}\label{eq:rhoJ}
\Ric_{\rho_J,J}=0,\qquad\int_M\rho_J = V := \int_M\rho.
\end{equation}
Second, for every~${J\in\sJ_{\INT,0}(M)}$ 
and every~${\Jhat\in\Om^{0,1}_J(M,TM)}$
with~${\bar\p_J\Jhat=0}$, there exist unique 
functions~${f,g\in\Om^0(M)}$ that satisfy
\begin{equation}\label{eq:LAMBDAfg}
\Lambda_{\rho_J}(J,\Jhat) = -df\circ J + dg,\qquad
\int_Mf\rho_J = \int_Mg\rho_J=0.
\end{equation}
(See~\cite[Lemma~3.8]{GPST}.)
With this understood, define
\begin{equation}\label{eq:OMWP}
\Om_J^\WP(\Jhat_1,\Jhat_2) 
:= \int_M\Bigl(
\tfrac{1}{2}\trace\bigl(\Jhat_1J\Jhat_2\bigr)
- f_1g_2 + f_2g_1
\Bigr)\rho_J
\end{equation}
for~${J\in\sJ_{\INT,0}(M)}$ and~$\Jhat_i\in\Om^{0,1}_J(M,TM)$
with~${\bar\p_J\Jhat_i=0}$, where~${f_i,g_i}$ are as in~\eqref{eq:LAMBDAfg}.

\begin{theorem}[\cite{GPST}]\label{thm:WPCY}
Equation~\eqref{eq:OMWP} defines a closed $2$-form~$\Om^\WP$ on~$\sJ_{\INT,0}(M)$
that descends to a symplectic form, still denoted by~$\Om^\WP$, 
on the Teichm\"uller space~$\sT_0(M)$.
Its pullback under the diffeomorphism~${\iota_\rho:\sT_0(M,\rho)\to\sT_0(M)}$
is the symplectic form induced by~\eqref{eq:OM} and renders~$\sT_0(M,\rho)$
into a symplectic submanifold of the infinite-dimensional 
symplectic quotient~$\sW_0(M,\rho)$ in~\eqref{eq:WZERO}.
\end{theorem}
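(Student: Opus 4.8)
The plan is to establish the statement in five steps: (A) formula~\eqref{eq:OMWP} is a well-defined smooth $2$-form on~$\sJ_{\INT,0}(M)$; (B) it is $\Diff_0(M)$-invariant and horizontal for the $\Diff_0(M)$-action, hence basic, so it descends to a $2$-form on~$\sT_0(M)$ (which we keep denoting~$\Om^\WP$); (C) along~$\iota_\rho$ this descended form pulls back to the reduction of~\eqref{eq:OM}; (D) the reduction of~\eqref{eq:OM} on (the regular part of)~$\sW_0(M,\rho)$ is closed, which forces~$\Om^\WP$ to be closed on~$\sT_0(M)$ and hence, being basic, on~$\sJ_{\INT,0}(M)$; and (E) $\Om^\WP$ is nondegenerate on~$\sT_0(M)$. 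Combining (C)--(E) then exhibits~$\iota_\rho^*\Om^\WP$ as the restriction of the reduced symplectic form to the submanifold~$\sT_0(M,\rho)$ and shows this restriction is nondegenerate, i.e.\ that~$\sT_0(M,\rho)$ is a symplectic submanifold of~$\sW_0(M,\rho)$; that~$\sT_0(M,\rho)$ lies in the regular part is exactly Proposition~\ref{prop:ISOTROPY} (using hard Lefschetz and Yau's theorem to supply a K\"ahler form with volume form~$\rho$).

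For (A): by~\eqref{eq:RICf} and~\eqref{eq:rhoJ} every~${J\in\sJ_{\INT,0}(M)}$ carries a unique Ricci-flat volume form~$\rho_J$ with~${\int_M\rho_J=V}$, and by~\eqref{eq:LAMBDAfg} (that is, \cite[Lemma~3.8]{GPST}) the mean-zero functions~$f_i,g_i$ attached to a~$\bar\p_J$-closed~$\Jhat_i$ are uniquely determined; skew-symmetry of~\eqref{eq:OMWP} is immediate, and smoothness in~$(J,\Jhat_i)$ follows from elliptic regularity and the implicit function theorem in suitable Sobolev completions, recalling that~${T_J\sJ_{\INT,0}(M)=\ker(\bar\p_J\colon\Om^{0,1}_J(M,TM)\to\Om^{0,2}_J(M,TM))}$ by Bogomolov--Tian--Todorov. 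For (B): $\Diff_0(M)$-invariance follows from~${\rho_{\phi^*J}=\phi^*\rho_J}$ (a consequence of~\eqref{eq:RICphi} and uniqueness in~\eqref{eq:rhoJ}) together with the equivariance of~$\Lambda_\rho$ and of the normalization, which give~${f_{\phi^*J,\phi^*\Jhat}=f_{J,\Jhat}\circ\phi}$ and likewise for~$g$, whence~${\phi^*\Om^\WP=\Om^\WP}$. Horizontality is the identity~${\Om^\WP_J(\cL_XJ,\Jhat)=0}$ for~${X\in\Vect(M)}$ and~$\bar\p_J$-closed~$\Jhat$: one takes~${\rho=\rho_J}$, so that~${\Ric_{\rho_J,J}=0}$ and~\eqref{eq:LAMBDA3} reduces to~${\Lambda_{\rho_J}(J,\cL_XJ)=-df_X\circ J+df_{JX}}$ with~$f_X,f_{JX}$ the~$\rho_J$-divergences of~$X$ and~$JX$; then~\eqref{eq:LAMBDA2} rewrites the trace term of~\eqref{eq:OMWP} as an integral of~${(-df\circ J+dg)\wedge\iota(X)\rho_J}$, and the~$f_ig_j$-terms supply exactly the boundary contributions needed for everything to cancel once the mean values of~$f_X,f_{JX}$ are accounted for. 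Granting this, $\Om^\WP=\pi^*\Om^\WP$ for the quotient projection~${\pi\colon\sJ_{\INT,0}(M)\to\sT_0(M)}$, so it descends.

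For (C): if~${J\in\sJ_{\INT,0}(M,\rho)}$ then~${\Ric_{\rho,J}=0}$ and~${\int_M\rho=V}$, so~${\rho_J=\rho}$ by uniqueness; and if~$\Jhat$ is~$\bar\p_J$-closed with~${\Richat_\rho(J,\Jhat)=0}$, then~${d\Lambda_\rho(J,\Jhat)=2\Richat_\rho(J,\Jhat)=0}$ by~\eqref{eq:LAMBDA1}, so by~\eqref{eq:LAMBDAfg} the function~$f$ satisfies~${d(df\circ J)=0}$, i.e.\ $\p\bar\p f=0$; since~$M$ is compact connected and~$J$ is K\"ahlerable, an integration by parts against a K\"ahler form gives~${\bar\p f=0}$, so~$f$ is constant, so~${f=0}$. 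Hence on~${T_{[J]_\rho}\sT_0(M,\rho)}$ formula~\eqref{eq:OMWP} reduces to~${\tfrac12\int_M\trace(\Jhat_1J\Jhat_2)\rho=\Om_{\rho,J}(\Jhat_1,\Jhat_2)}$, which is~(C). For (D): the reduced form~$\om_{\mathrm{red}}$ on the regular part of~$\sW_0(M,\rho)$ satisfies~${\pi_0^*\om_{\mathrm{red}}=j^*\Om_\rho}$ with~${\pi_0\colon\sJ_0(M,\rho)\to\sW_0(M,\rho)}$ the quotient map, $j$ the inclusion into~$\sJ_0(M)$, and~$\Om_\rho$ the closed form~\eqref{eq:OM}; hence~${\pi_0^*d\om_{\mathrm{red}}=j^*d\Om_\rho=0}$ and, $\pi_0$ being a submersion, $d\om_{\mathrm{red}}=0$. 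Its restriction to~$\sT_0(M,\rho)$ is therefore closed; pushing forward along~$\iota_\rho$ shows~$\Om^\WP$ is closed on~$\sT_0(M)$, and since~$\Om^\WP$ is basic this gives~${d\Om^\WP=0}$ on~$\sJ_{\INT,0}(M)$. Finally, for (E), fix~${J\in\sJ_{\INT,0}(M)}$, choose a Ricci-flat K\"ahler form~$\om$ compatible with~$J$ (Yau), rescaled so that~${\om^\sn/\sn!=\rho_J}$, and represent a class in~${T_{[J]}\sT_0(M)}$ by its~$\bar\p_J$-harmonic representative~$\Jhat$; for a Calabi--Yau metric~$\Jhat$ is symmetric and~$\bar\p_J^*$-closed, so~${\Lambda_{\rho_J}(J,\Jhat)=\iota(2J\bar\p_J^*\Jhat^*)\om=0}$ by~\eqref{eq:LAMBDA4}, whence~${\Richat_{\rho_J}(J,\Jhat)=0}$ and the associated~$f,g$ vanish. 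Since the Dolbeault Laplacian commutes with~$J$, the vector~$-J\Jhat$ is again a harmonic, symmetric representative with the same properties, and
$$
\Om^\WP_J(\Jhat,-J\Jhat)=\tfrac12\int_M\trace(\Jhat^2)\,\rho_J=\tfrac12\Norm{\Jhat}^2\ge0,
$$
with equality only for~${\Jhat=0}$. Thus~$\Om^\WP$ tames the complex structure on the space of harmonic representatives, hence is nondegenerate on~$\sT_0(M)$; transporting back, $\iota_\rho^*\Om^\WP=\om_{\mathrm{red}}|_{\sT_0(M,\rho)}$ is nondegenerate, so~$\sT_0(M,\rho)$ is a symplectic submanifold of~$\sW_0(M,\rho)$.

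The main obstacle is the horizontality computation in~(B): it is the one place where the precise form of the correction terms~${-f_1g_2+f_2g_1}$ in~\eqref{eq:OMWP} is pinned down, and it must be carried out by hand, combining~\eqref{eq:LAMBDA2} and~\eqref{eq:LAMBDA3} with careful bookkeeping of the mean-value normalizations in~\eqref{eq:LAMBDAfg}; once it is in place, closedness comes for free from the Marsden--Weinstein picture and nondegeneracy from Calabi--Yau Hodge theory. A secondary point requiring care is the claim in~(E) that the~$\bar\p_J$-harmonic representatives actually lie in the reduced tangent spaces and that~${\Jhat\mapsto-J\Jhat}$ preserves harmonicity and the symmetry~${\Jhat=\Jhat^*}$ --- standard for Calabi--Yau metrics but ultimately resting on the Bochner-type vanishing that accompanies~${\Ric_{\rho,J}=0}$.
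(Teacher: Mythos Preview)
Your steps (A)--(D) are sound and track the paper's outline; in particular the pullback computation in (C) (only $f$ needs to vanish, and it does) and the closedness argument via reduction in (D) are correct. The gap is in (E).

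You assert that on a Ricci-flat K\"ahler manifold the $\bar\p_J$-harmonic representative $\Jhat$ of a class in $\ker\bar\p_J/\im\bar\p_J$ is automatically symmetric, $\Jhat=\Jhat^*$. This is false. What holds---and this is precisely the key lemma the paper invokes, \cite[Lemma~3.10]{GPST}---is that the \emph{space} of $\bar\p_J$-harmonic elements of $\Om^{0,1}_J(M,TM)$ is invariant under $\Jhat\mapsto\Jhat^*$; individual harmonics need not be fixed. On the flat torus $\T^{2\sn}$ the harmonics are the constant complex-antilinear endomorphisms, and the antisymmetric ones form a space of real dimension $\sn^2-\sn$, nonzero for $\sn\ge2$.

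This breaks your positivity step: for antisymmetric $\Jhat$ one has $\trace(\Jhat^2)=-\abs{\Jhat}^2\le0$, so the displayed inequality $\Om^\WP_J(\Jhat,-J\Jhat)=\tfrac12\int_M\trace(\Jhat^2)\rho_J\ge0$ is wrong in general. Indeed the paper states explicitly, just after the theorem, that $\Om^\WP$ is of type $(1,1)$ but is \emph{not} a K\"ahler form on $\sT_0(M)$; the complex dimension of its negative part is the Hodge number $h^{2,0}$ (equal to $\binom{\sn}{2}$ for $\T^{2\sn}$ and $1$ for K3). Your taming argument would force $h^{2,0}=0$.

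The repair is to use $*$-invariance to split the harmonic space into its symmetric and antisymmetric parts. The map $\Jhat\mapsto-J\Jhat$ preserves each summand, the two summands are orthogonal for the pairing $\tfrac12\int_M\trace(\Jhat_1\Jhat_2)\rho_J$ (since $\trace(SA)=0$ pointwise when $S^*=S$ and $A^*=-A$), and this pairing is positive-definite on the symmetric part and negative-definite on the antisymmetric part. Hence $\Om^\WP_J$ is nondegenerate on the harmonic space, which is the paper's formulation that the kernel of $\Om^\WP_J$ on $\ker\bar\p_J$ is exactly $\im\bar\p_J$. Your observation $\Lambda_{\rho_J}(J,\Jhat)=0$ for harmonic $\Jhat$ survives once you invoke $*$-invariance (then $\Jhat^*$ is harmonic too, so $\bar\p_J^*\Jhat^*=0$), but the taming conclusion must be replaced by this signature argument.
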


Here is why this result is not quite as obvious as it may seem at first glance.  
The space~$\sJ_0(M)$ admits a complex structure
$$
\Jhat\mapsto-J\Jhat
$$
and the symplectic form~$\Om_\rho$ in equation~\eqref{eq:OM} 
is a $(1,1)$-form for this complex structure.  However, when~${\dim(M)>2}$,
it is not a K\"ahler form because the symmetric bilinear form
$$
\Om_{\rho,J}(\Jhat_1,-J\Jhat_2)= \tfrac{1}{2}\int_M\trace(\Jhat_1\Jhat_2)\rho
$$
is indefinite on~${\Om^{0,1}_J(M,TM)}$.
Thus a complex submanifold of~$\sJ_0(M)$ need not be symplectic.  
This is precisely the case for the submanifold~${\sJ_{\INT,0}(M)}$ 
in~\eqref{eq:TEICH}, because the restriction of the 
$2$-form~$\Om_{\rho,J}$ to~${T_J\sJ_{\INT,0}(M)=\ker\bar\p_J}$ 
has a nontrivial kernel in the case~${\dim(M)>2}$,
which in the case~${\Ric_{\rho,J}=0}$ consists of all 
infinitesimal deformations~${\Jhat=\cL_XJ}$ of complex structures
such that both~$X$ and~$JX$ are divergence-free.
A key ingredient in the proof of this assertion is the fact that the space 
of all $\bar\p_J$-harmonic $(0,1)$-forms~${\Jhat\in\Om^{0,1}_J(M,TM)}$
on a closed Ricci-flat K\"ahler manifold is~invariant 
under the homomorphism~${\Jhat\mapsto\Jhat^*}$
(see~\cite[Lemma~3.10]{GPST}).
It then follows that the kernel of the $2$-form~$\Om^\WP_J$
in~\eqref{eq:OMWP} on~${T_J\sJ_{\INT,0}(M)=\ker\bar\p_J}$ 
is the image of~$\bar\p_J$ and hence~$\Om^\WP_J$ 
descends to a nondegenerate $2$-form on the 
tangent space~${T_{[J]}\sT_0(M)=\ker\bar\p_J/\im\bar\p_J}$
for each~${J\in\sJ_{\INT,0}(M)}$.
This shows that~$\Om^\WP$ descends to a symplectic form 
on~$\sT_0(M)$ (see~\cite[Theorem~4.4]{GPST}).

Theorem~\ref{thm:WPCY} gives an alternative construction of 
the Weil--Petersson symplectic form on the
Teichm\"uller space of Calabi--Yau structures 
(see~\cite{HUY,KOISO,N,S4,TIAN,TODOROV}
for the polarized case and~\cite[Ch~16]{FHW}
for the K3 surface).   The Teich\-m\"uller space~$\sT_0(M)$
carries a natural complex structure
$$
[\Jhat]\mapsto[-J\Jhat]
$$
and the Weil--Petersson symplecic form~$\Om^\WP$ 
is of type~$(1,1)$, however, it is not a K\"ahler form in general. 
The complex dimension of the negative part is the Hodge number~$h^{2,0}$
and the total dimension is the Hodge number~${h^{\sn-1,1}(M,L)}$,
where~${L=\Lambda^{n,0}_JTM}$.    
These Hodge numbers are deformation invariant by~\cite[Proposition~9.30]{VOISIN}.

If~${a\in H^2(M;\R)}$ is a K\"ahler class, then the tangent spaces
of the polarized Teich\-m\"uller space
$
\sT_{0,a}(M)\subset\sT_0(M)
$
in Remark~\ref{rmk:TEICHomrho} below are posi\-tive subspaces
for the Weil--Petersson symplectic form and so~$\Om^\WP$ 
restricts to a K\"ahler form on~$\sT_{0,a}(M)$. 
If~${h^{2,0}=0}$, then~$\sT_0(M)$ is Hausdorff and K\"ahler 
and each polarized space~$\sT_{0,a}(M)$ 
is an open subset of~$\sT_0(M)$.
Here is a list of the real dimensions 
for the $2\sn$-torus, 
the K3 surface, 
the Enriques surface, 
the quintic in~$\CP^4$, and
the banana manifold~${\it B}$ in~\cite{BRYAN}.
The last column lists the dimensions of the K\"ahler cones.
\begin{center}
\begin{tabular}{||c||c|c|c|c||}
%\hline
%& \multicolumn{3}{c||}{Real Dimensions} \\
\hline
\multirow{2}{*}{$M$}  & $\sT_0(M)$ & $\sT_{0,a}(M)$ &  & $\cK_J$  \\
& $2h^{\sn-1,1}(M,L)$ & $2h^{\sn-1,1}(M,L)-2h^{2,0}$ & $2h^{2,0}$ & $h^{1,1}$ \\
 \hline
 \hline
 $\T^{2\sn}$   & $2\sn^2$ & $\sn^2+\sn$ & $\sn^2-\sn$ & $\sn^2$ \\
 ${\it K3}$ & 40 & 38 & 2 & 20 \\
 Enriques & 20 & 20 & 0 & 10 \\
 Quintic    & 202 & 202 & 0 & 1 \\
 ${\it B}$  & 16 & 16 & 0 & 20 \\ 
 %${\it JB}$ & 0 & 0 & 0 & 4 \\
\hline
\end{tabular}
\end{center}
%%

%%%%%%%%%%%%%%%%%%%%%%%%%%%%%%%%
%%%%%%%% Subsection 2.2 %%%%%%%%
%%%%%%%%%%%%%%%%%%%%%%%%%%%%%%%%
 
\subsection{The Teichm\"uller space of K\"ahler--Einstein structures}
\label{subsec:TEICHKE}

Let~$M$ be a closed connected oriented $2\sn$-manifold,
let~${c\in H^2(M;\R)}$ be a nonzero co\-ho\-mo\-lo\-gy class 
that admits an integral lift, and let~$\hbar$ be a real number
such that~${(2\pi\hbar c)^\sn>0}$.  Consider the space
$$
\sJ_{\INT,c}(M) := \left\{J\in\sJ_\INT(M)\,\big|\,
c_1^\R(J)=c,\, 2\pi\hbar c\in\cK_J\right\}
$$
of all complex structures~$J$ on~$M$ whose real first Chern class is~$c$
and whose K\"ahler cone~$\cK_J$ contains the cohomology class~${2\pi\hbar c}$.  
For~${J\in\sJ_{\INT,c}(M)}$ denote by 
\begin{equation}\label{eq:SJ}
\sS_J := \left\{\om\in\Om^2(M)\,|\,d\om=0,\,\om^\sn>0,\,
[\om]=2\pi\hbar c,\,J\in\sJ_\INT(M,\om)\right\}
\end{equation}
the space of all symplectic forms~$\om$ on~$M$ that are compatible 
with~$J$ and represent the cohomology class~${2\pi\hbar c}$.
By the Calabi--Yau Theorem~\cite{CALABI2,YAU0} 
each volume form~${\rho\in\Om^{2\sn}(M)}$ 
with~${\int_M\rho=\inner{(2\pi\hbar c)^\sn/\sn!}{[M]}}$
and each~${J\in\sJ_{\INT,c}(M)}$
determine a unique symplectic form~${\om_{\rho,J}\in\sS_J}$ 
whose volume form is~$\rho$, i.e.\ 
\begin{equation}\label{eq:OMRHOJ}
\om_{\rho,J}^\sn/\sn!=\rho.
\end{equation}
In the case of general type with~${\hbar<0}$ a theorem 
of Yau~\cite{YAU1,YAU2} asserts that every complex 
structure~${J\in\sJ_{\INT,c}(M)}$ admits a unique 
symplectic form~${\om\in\sS_J}$ that satisfies the
K\"ahler--Einstein condition
$$
\Ric_{\om^\sn/\sn!,J}=\om/\hbar.
$$
In the Fano case with~${\hbar>0}$ the Chen--Donaldson--Sun 
Theorem~\cite{CDS0,CDS123}
asserts that a complex structure~${J\in\sJ_{\INT,c}(M)}$ 
admits a symplectic form~${\om\in\sS_J}$ that satis\-fies the 
K\"ahler--Einstein condition~${\Ric_{\om^\sn/\sn!,J}=\om/\hbar}$ 
if and only if it satis\-fies the K-polystablility condition 
of Yau--Tian--Donaldson~\cite{DON4,TIAN0,YAU2}. 
The {\it ``only if''} statement was proved earlier by Berman~\cite{BERMAN}.
Moreover, it was shown by Berman--Berndtsson~\cite{BB}
that, if~${\om,\om'\in\sS_J}$ both satisfy the K\"ahler--Einstein condition, 
then there exists a holomorphic diffeomorphism~${\psi\in\Aut_0(M,J)}$
such that~${\om'=\psi^*\om}$. 

Fix a volume form~${\rho\in\Om^{2\sn}(M)}$ 
with~${\int_M\rho=\inner{(2\pi\hbar c)^\sn/\sn!}{[M]}}$
and consider the following models for the 
{\bf Teichm\"uller space of K\"ahler--Einstein structures}:
\begin{equation}\label{eq:TEICHKE}
\begin{split}
\sT_c(M,\rho) 
&:= \sJ_{\KE,c}(M,\rho)/\Diff_0(M,\rho), \\
\sJ_{\KE,c}(M,\rho)
&:= \left\{J\in\sJ_{\INT,c}(M)\,|\,\Ric_{\rho,J}=\om_{\rho,J}/\hbar\right\}, \\
\sT_c(M) 
&:= \sJ_{\KE,c}(M)/\Diff_0(M), \\
\sJ_{\KE,c}(M)
&:= \left\{J\in\sJ_{\INT,c}(M)\,|\,J\mbox{ is K-polystable}\right\}.
\end{split}
\end{equation}
In general these spaces may be singular.
At the equivalence class of a regular element~${J\in\sJ_{\KE,c}(M,\rho)}$,
respectively~${J\in\sJ_{\KE,c}(M)}$, the tangent spaces are
\begin{equation}\label{eq:TTEICHKE}
\begin{split}
T_{[J]_\rho}\sT_c(M,\rho)  
&=
\frac{\left\{\Jhat\in\Om^{0,1}_J(M,TM)\,\big|\,
\bar\p_J\Jhat=0,\,
\Richat_\rho(J,\Jhat)\wedge\om_{\rho,J}^{\sn-1}=0
\right\}}
{\left\{\cL_{X}J\,|\,X\in\Vect(M),\,d\iota(X)\rho=0
\right\}}, \\
T_{[J]}\sT_c(M) 
&= 
\frac{\left\{\Jhat\in\Om^{0,1}_J(M,TM)\,\big|\,\bar\p_J\Jhat=0\right\}}
{\left\{\cL_{X}J\,\big|\,X\in\Vect(M)\right\}}.
\end{split}
\end{equation}
Combining the theorems of Yau, Ber\-man--Berndtsson,
and Chen--Donaldson--Sun with Moser isotopy, we find
that the natural map~${\iota_\rho:\sT_c(M,\rho)\to\sT_c(M)}$
is a bijection (and a diffeomorphism on the smooth part). 
Thus~$\sT_c(M,\rho)$ inherits the complex structure from~$\sT_c(M)$, 
and~$\sT_c(M)$ inherits the symplectic form from~$\sT_c(M,\rho)$. 

\begin{lemma}[{\bf Hodge decomposition}]\label{le:DECOMP}
Let~${J\in\sJ_{\KE,c}(M)}$, choose~$\om\in\sS_J$ 
with~${\Ric_{\om^\sn/\sn!,J}=\om/\hbar}$,
define~${\rho:=\om^\sn/\sn!}$, and 
let~${\Jhat\in\Om^{0,1}_J(M,TM)}$ with~${\bar\p_J\Jhat=0}$.   
Then there exist~${X\in\Vect(M)}$, ${F,G\in\Om^0(M)}$, 
and~${A\in\Om^{0,1}_J(M,TM)}$ such that 
\begin{equation}\label{eq:DECOMP1}
\Jhat = \cL_XJ + \cL_{v_F}J + \cL_{Jv_G}J + A,
\end{equation}
\begin{equation}\label{eq:DECOMP2}
d\iota(X)\rho = d\iota(JX)\rho = 0,\qquad
A=A^*,\qquad \bar\p_JA=0,\qquad \bar\p_J^*A=0.
\end{equation}
Thus~${\Lambda_\rho(J,A)=0}$ by Remark~\ref{rmk:LAMBDA}.
Moreover,~$X$ and~$A$ are uniquely determined by~$\Jhat$,
the four summands in~\eqref{eq:DECOMP1} 
are pairwise $L^2$ orthogonal, and
\begin{equation}\label{eq:DECOMP3}
\begin{split}
\Lambda_\rho(J,\Jhat) 
&= 
\tfrac{2}{\hbar}\iota(X)\om 
+ d\bigl(\tfrac{2}{\hbar}F-d^*dF\bigr)
- d\bigl(\tfrac{2}{\hbar}G-d^*dG\bigr)\circ J.
\end{split}
\end{equation}
\end{lemma}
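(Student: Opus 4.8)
The plan is to build the decomposition~\eqref{eq:DECOMP1} by successive Hodge-type projections, using the K\"ahler--Einstein identity~${\Ric_{\rho,J}=\om/\hbar}$ at every step to convert differential-operator kernels into geometric conditions. First I would recall that on a K\"ahler manifold~$\cL_XJ=2J\bar\p_JX$ (Remark~\ref{rmk:LAMBDA}), so a~$\bar\p_J$-closed~$\Jhat$ admits a Hodge decomposition~${\Jhat=2J\bar\p_J\xi+A}$ with~$A$ the~$\bar\p_J$-harmonic part; since~$J$ is K\"ahler--Einstein, harmonicity for~$\bar\p_J$ on~$\Om^{0,1}_J(M,TM)$ is equivalent to~${\bar\p_JA=\bar\p_J^*A=0}$, and by~\cite[Lemma~3.10]{GPST} (the Ricci-flat case) together with the Bochner--Kodaira argument adapted to the K\"ahler--Einstein sign, the harmonic space is invariant under~${A\mapsto A^*}$, so one may further take~${A=A^*}$ after absorbing the skew part into the exact term. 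The vector field~$\xi$ is then split using the decomposition of~$\Vect(M)$ on a K\"ahler manifold: write~${\xi = X' + v_F + Jv_G + (\text{holomorphic part})}$, where~$X'$ has both~$X'$ and~$JX'$ divergence-free. Here the key input is~\cite[Lemma~3.7]{GPST}, which (given~$H^1$ considerations handled via harmonic representatives) describes holomorphic vector fields on a K\"ahler--Einstein manifold as sums of Killing fields and gradients; regrouping yields~\eqref{eq:DECOMP1} with the stated~$X,F,G,A$.

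Next I would establish the orthogonality and uniqueness claims. The four summands in~\eqref{eq:DECOMP1} are claimed pairwise $L^2$-orthogonal: $A$ is orthogonal to everything in the image of~${X\mapsto\cL_XJ=2J\bar\p_JX}$ because~$A$ is~$\bar\p_J^*$-closed, and the three remaining pieces are mutually orthogonal by Corollary~\ref{cor:MATSUSHIMA}, which (since~${S_{\om,J}=c_\om}$ in the K\"ahler--Einstein case) gives~${\INNER{\cL_{v_F}J}{\cL_{Jv_G}J}=0}$ and the analogous identities when one entry involves a Killing-type field~$X$ with~$X,JX$ divergence-free; one checks via~\eqref{eq:OMSFG} and~\eqref{eq:ShatF} that the relevant Poisson-bracket integrals vanish. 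Uniqueness of~$X$ and~$A$ then follows: $A$ is the orthogonal projection of~$\Jhat$ onto the~$\bar\p_J$-harmonic symmetric space, and~$\cL_XJ$ is pinned down as the component of~$\Jhat-A$ lying in the image of the Killing-type operator, which is injective on that complement by the Weitzenb\"ock formula~\eqref{eq:WB} (whose right-hand side is positive when~${\Ric_{\rho,J}=\om/\hbar}$ with~${\hbar<0}$ — the general type case — and more care is needed when~${\hbar>0}$, see below).

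Finally I would compute~$\Lambda_\rho(J,\Jhat)$ term by term using Proposition~\ref{prop:LAMBDA}. We have~${\Lambda_\rho(J,A)=0}$ by the second formula in~\eqref{eq:LAMBDA4} since~${\bar\p_J^*A^*=0}$. For the~$X$-term, apply~\eqref{eq:LAMBDA3}: because~$X$ and~$JX$ are divergence-free,~${f_X=f_{JX}=0}$, so~${\Lambda_\rho(J,\cL_XJ)=2\iota(X)\Ric_{\rho,J}=\tfrac2\hbar\iota(X)\om}$ using the K\"ahler--Einstein equation. For the~$v_F$ and~$Jv_G$ terms, again invoke~\eqref{eq:LAMBDA3}: the divergences~$f_{v_F}$ and~$f_{v_{JG}}$ are computed from~${f_{Jv_H}=-d^*dH}$ (the identity used in~\eqref{eq:LS1}), and~${2\iota(v_F)\Ric_{\rho,J}=\tfrac2\hbar\iota(v_F)\om=\tfrac2\hbar dF}$, which assembles into the~${d(\tfrac2\hbar F-d^*dF)}$ and~${-d(\tfrac2\hbar G-d^*dG)\circ J}$ contributions of~\eqref{eq:DECOMP3}. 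The main obstacle is the Fano case~${\hbar>0}$: there the Weitzenb\"ock formula~\eqref{eq:WB} no longer forces injectivity of~${X\mapsto\cL_XJ+(\cL_XJ)^*}$, so the clean splitting and the uniqueness of~$X$ may fail for K\"ahler--Einstein structures with continuous automorphisms; I expect the statement to be asserted under the regularity hypothesis (no nonzero holomorphic Hamiltonian vector fields, or reductivity of the automorphism group via Matsushima), and the bulk of the work is verifying that this hypothesis makes the relevant operators Fredholm with the expected kernels so that the Hodge-theoretic projections above are well defined.
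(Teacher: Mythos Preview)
Your overall strategy --- Hodge-decompose $\Jhat=2J\bar\p_J\xi+A$ first, then split the vector field $\xi=X+v_F+Jv_G$ --- is a genuinely different route from the paper's, and in one respect cleaner: it bypasses a Fredholm step. The paper instead writes $\iota(Y)\om=\tfrac{\hbar}{2}\Lambda_\rho(J,\Jhat)$, decomposes $Y=X+v_\Phi+Jv_\Psi$, and must then solve $\tfrac{2}{\hbar}F-d^*dF=\tfrac{2}{\hbar}\Phi$ (and likewise for $G$). The obstruction is the $\tfrac{2}{\hbar}$-eigenspace of $d^*d$, which the paper identifies with $\{H:\cL_{v_H}J=0\}$ and shows to be orthogonal to $\Phi,\Psi$ via~\eqref{eq:LAMBDA2}. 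Your route never meets this obstruction because the vector-field decomposition is just the Hodge decomposition of $\iota(\xi)\om$ and always exists.

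There is, however, a real gap in your argument for $A=A^*$. You say the harmonic space is $*$-invariant (extending \cite[Lemma~3.10]{GPST} to the K\"ahler--Einstein sign) and then ``absorb the skew part into the exact term.'' But $*$-invariance only says $A^*$ is again harmonic; it does \emph{not} make $\tfrac{1}{2}(A-A^*)$ exact, so there is nothing to absorb. The paper's symmetry argument is quite different and does not use $*$-invariance: it invokes \cite[Lemma~3.6]{GPST}, which (because $\bar\p_J\Jhat=0$) yields
$\om(\Jhat u,Jv)+\om(Ju,\Jhat v)=\omhat(u,v)-\omhat(Ju,Jv)$
with $\omhat=\hbar\Richat_\rho(J,\Jhat)=d\iota(Y)\om$, so that $\Jhat-\cL_YJ$ is symmetric; since $Y-Z=v_{\Phi-F}+Jv_{\Psi-G}$ and $\cL_{v_H}J$, $\cL_{Jv_H}J$ are symmetric on a K\"ahler manifold, $A=(\Jhat-\cL_YJ)+\cL_{Y-Z}J$ is symmetric. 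In your framework you need exactly this ingredient (Lemma~3.6, not a Bochner--Kodaira identity) to conclude that the harmonic part of a $\bar\p_J$-closed $\Jhat$ is symmetric on a K\"ahler--Einstein manifold.

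Your final paragraph contains a misconception: the lemma holds for both signs of~$\hbar$ with no regularity hypothesis, and the paper's proof treats them uniformly. The Fano case is precisely where the Fredholm obstruction can be nonzero, and the orthogonality step handles it. In your approach there is no such obstruction to begin with; the vector-field Hodge decomposition exists regardless, and uniqueness of $X$ and $A$ comes directly from Hodge theory (with $X=0$ automatically when $\hbar>0$ since $H^1(M;\R)=0$). The Weitzenb\"ock formula~\eqref{eq:WB} and the injectivity of $X\mapsto\cL_XJ+(\cL_XJ)^*$ play no role in this lemma.
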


\begin{proof}
The proof has five steps.

\medskip\noindent{\bf Step~1.}
{\it If~${F,G\in\Om^0(M)}$ and~${X\in\Vect(M)}$ satisfies
${d\iota(X)\rho = d\iota(JX)\rho = 0}$, then}
$$
\Lambda_\rho(J,\cL_XJ+\cL_{v_F}J+\cL_{Jv_G}J) 
= 
\tfrac{2}{\hbar}\iota(X)\om 
+ d\bigl(\tfrac{2}{\hbar}F-d^*dF\bigr)
- d\bigl(\tfrac{2}{\hbar}G-d^*dG\bigr)\circ J.
$$

\medskip\noindent
This follows from~\eqref{eq:LAMBDA3} with~${f_{v_F}=0}$ and~${f_{Jv_F}=-d^*dF}$.

\medskip\noindent{\bf Step~2.}
{\it There exist functions~${\Phi,\Psi\in\Om^0(M)}$
and vector fields~${X,Y\in\Vect(M)}$ 
such that~${d\iota(X)\rho=d\iota(JX)\rho=0}$ and}
\begin{equation}\label{eq:JHATPP}
\Lambda_\rho(J,\Jhat)
= \tfrac{2}{\hbar}\iota(Y)\om
= \tfrac{2}{\hbar}\bigl(\iota(X)\om+d\Phi-d\Psi\circ J\bigr),\qquad
Y = X + v_\Phi + Jv_\Psi.
\end{equation}

\noindent
Choose~${Y\in\Vect(M)}$ such that
$
\iota(Y)\om=\tfrac{\hbar}{2}\Lambda_\rho(J,\Jhat)
$
and then choose~${\Phi,\Psi\in\Om^0(M)}$ and~${X\in\Vect(M)}$ 
such that~${d\iota(X)\rho=d\iota(JX)\rho=0}$ and~${Y=X+v_\Phi+Jv_\Psi}$.

\medskip\noindent{\bf Step~3.}
{\it Let~${H\in\Om^0(M)}$.  Then~${d(d^*dH-\tfrac{2}{\hbar}H)=0}$
if and only if~${\cL_{v_H}J=0}$.}

\medskip\noindent
We have~${\iota(Jv_H)\rho=*dH}$ 
and~${\Lambda_\rho(J,\cL_{v_H}J)=d\bigl(\tfrac{2}{\hbar}H-d^*dH\bigr)}$
by~\eqref{eq:LAMBDA3}.  Hence
\begin{equation}\label{eq:LHJ}
\begin{split}
\tfrac{1}{2}\Norm{\cL_{v_H}J}^2
&= 
- \int_M\Lambda_\rho(J,\cL_{v_H}J)\wedge\iota(Jv_H)\rho 
= 
\int_M\bigl(d^*dH-\tfrac{2}{\hbar}H\bigr)(d^*dH)\rho
\end{split}
\end{equation}
by~\eqref{eq:LAMBDA2} and this proves Step~3.

\medskip\noindent{\bf Step~4.}
{\it Let~${H\in\Om^0(M)}$ such that~${d^*dH=\tfrac{2}{\hbar}H}$.
Then~${\int_M\Phi H\rho=\int_M\Psi H\rho=0}$.}

\medskip\noindent
By Step~3 we have~${\cL_{v_H}J=0}$ and~${\cL_{Jv_H}J=0}$.  
Hence, by~\eqref{eq:LAMBDA2} and~\eqref{eq:JHATPP},
\begin{equation*}
\begin{split}
0
&= \tfrac{\hbar}{2}\int_M\Lambda_\rho(J,\Jhat)\wedge\iota(Jv_H)\rho
= \int_Md\Phi\wedge\iota(Jv_H)\rho
= \int_M \Phi(d^*dH)\rho
= \tfrac{2}{\hbar}\int_M\Phi H\rho, \\
0
&= - \tfrac{\hbar}{2}\int_M\Lambda_\rho(J,\Jhat)\wedge\iota(v_H)\rho
= \int_Md\Psi\wedge\iota(Jv_H)\rho
= \int_M \Psi(d^*dH)\rho
= \tfrac{2}{\hbar}\int_M\Psi H\rho.
\end{split}
\end{equation*}
This proves Step~4.

\medskip\noindent{\bf Step~5.}
{\it We prove the lemma.}

\medskip\noindent
By Step~4 there exist~${F,G\in\Om^0(M)}$ 
such that~${\tfrac{2}{\hbar}F-d^*dF = \tfrac{2}{\hbar}\Phi}$, 
${\tfrac{2}{\hbar}G-d^*dG = \tfrac{2}{\hbar}\Psi}$.
Thus~${Z:=X+v_F+Jv_G}$ 
satisfies~${\Lambda_\rho(J,\cL_ZJ)=\Lambda_\rho(J,\Jhat)}$
by Step~1 and Step~2, 
and so~\eqref{eq:DECOMP1} and~\eqref{eq:DECOMP3} 
hold with~${A:=\Jhat-\cL_ZJ}$.
Let~${\omhat:=d\iota(Y)\om=\hbar\Richat_\rho(J,\Jhat)}$.
Then, since~${\bar\p_J\Jhat=0}$, it follows from~\cite[Lemma~3.6]{GPST} that
$$
\om(\Jhat u,Jv) + \om(Ju,\Jhat v)
= \omhat(u,v)-\omhat(Ju,Jv) 
= \om((\cL_YJ)u,Jv) + \om(Ju,(\cL_YJ)v)
$$
for all~${u,v\in\Vect(M)}$.  
Hence the endomorphism~${A=(\Jhat-\cL_YJ)+\cL_{Y-Z}J}$ 
of~$TM$ is symmetric.  This proves~\eqref{eq:DECOMP2} 
and Lemma~\ref{le:DECOMP}. 
\end{proof}

\begin{theorem}[{\bf Weil--Petersson symplectic form}]\label{thm:WPKE}
Let~${J\in\sJ_{\KE,c}(M)}$ and let~${\om,\rho,\Jhat_i,X_i,F_i,G_i,A_i}$ 
for~${i=1,2}$ be as in Lemma~\ref{le:DECOMP}.  Then
\begin{equation}\label{eq:TEICHSYMPKE}
\begin{split}
\Om_J^\WP\bigl(\Jhat_1,\Jhat_2\bigr)  
&:=
\tfrac{1}{2}\int_M\trace(A_1JA_2)\rho \\
&\phantom{:}=
\tfrac{1}{2}\int_M\trace(\Jhat_1J\Jhat_2)\rho
- \tfrac{2}{\hbar}\int_M\om_{\rho,J}(X_1,X_2)\rho \\
&\quad\;
+ \int_M \Bigl(\bigl(d^*dF_1-\tfrac{2}{\hbar}F_1\bigr)d^*dG_2
- \bigl(d^*dG_1-\tfrac{2}{\hbar}G_1\bigr)d^*dF_2\Bigr)\rho.
\end{split}
\end{equation}
The formula~\eqref{eq:TEICHSYMPKE} defines 
a K\"ahler form on the Teichm\"uller space~$\sT_c(M)$ 
and the mapping class group~${\Diff_c(M)/\Diff_0(M)}$ 
of isotopy classes of diffeomorphisms that preserve the 
cohomology class~$c$ acts on~$\sT_c(M)$ by K\"ahler isometries.
The pullback of~$\Om^\WP$ to~$\sT_c(M,\rho)$ 
is the $2$-form~$\Om^\WP_\rho$ given by
\begin{equation}\label{eq:WPrho}
\begin{split}
\Om_{\rho,J}^\WP\bigl(\Jhat_1,\Jhat_2\bigr) 
&=
\tfrac{1}{2}\int_M\trace(\Jhat_1J\Jhat_2)\rho  \\
&\quad
- \tfrac{\hbar}{2}\int_M\Lambda_\rho(J,\Jhat_1)\wedge\Lambda_\rho(J,\Jhat_2) 
\wedge \frac{\om_{\rho,J}^{\sn-1}}{(\sn-1)!} 
\end{split}
\end{equation}
for $J\in\sJ_{\KE,c}(M,\rho)$ and $\Jhat_i\in\Om^{0,1}_J(M,TM)$
with~${\bar\p_J\Jhat_i=0}$, ${\Richat_\rho(J,\Jhat_i)\wedge\om_{\rho,J}^{\sn-1}=0}$.
\end{theorem}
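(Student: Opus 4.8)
The plan is to verify the three formulas in Theorem~\ref{thm:WPKE} in turn and then deduce the Kähler and equivariance properties. First I would establish the first equality in~\eqref{eq:TEICHSYMPKE}, namely that $\tfrac12\int_M\trace(\Jhat_1J\Jhat_2)\rho$, when restricted to the tangent vectors $\Jhat_i$ of the form~\eqref{eq:DECOMP1}, reduces to $\tfrac12\int_M\trace(A_1JA_2)\rho$. This is precisely the statement that the $2$-form $\Om_\rho$ of~\eqref{eq:OM}, restricted to $\ker\bar\p_J$, has kernel consisting of all $\cL_XJ$ with $X,JX$ divergence-free together with the image of $\bar\p_J$ (the coexact part $\cL_{v_F}J+\cL_{Jv_G}J$ pairs to zero with the $A$-part, and the four summands are pairwise $L^2$-orthogonal on the $\bar\p_J$-harmonic space). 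I would extract this from Lemma~\ref{le:DECOMP} together with the self-adjointness of the harmonic space under $\Jhat\mapsto\Jhat^*$ (cited as \cite[Lemma~3.10]{GPST} in the Calabi--Yau discussion; in the KE case one uses the Bochner--type identity from Remark~\ref{rmk:COMPATIBLE}). Concretely, $\trace(\Jhat_1J\Jhat_2)=\trace((A_1+\cL_{Z_1}J)J(A_2+\cL_{Z_2}J))$ expands into the $AA$ term plus cross terms and a $\cL_{Z_1}J\cdot J\cL_{Z_2}J$ term; the cross terms integrate to zero because $\Lambda_\rho(J,A_i)=0$ (so $A_i\perp\cL_{\cdot}J$ by~\eqref{eq:LAMBDA2}), and the remaining term is evaluated by~\eqref{eq:LAMBDA2} again, producing exactly the $-\tfrac{2}{\hbar}\int\om(X_1,X_2)\rho$ and the $F_i,G_i$ terms via Step~3 of Lemma~\ref{le:DECOMP}.

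Next I would derive the second equality in~\eqref{eq:TEICHSYMPKE} — the explicit expansion in terms of $X_i,F_i,G_i$ — by the same bookkeeping: using~\eqref{eq:LAMBDA2} with $v=Z_2=X_2+v_{F_2}+Jv_{G_2}$ to rewrite $\tfrac12\int\trace(\cL_{Z_1}J\,J\cL_{Z_2}J)\rho$ as $\int\Lambda_\rho(J,\cL_{Z_1}J)\wedge\iota(Z_2)\rho$, then inserting the formula for $\Lambda_\rho(J,\cL_{Z_1}J)$ from Step~1 of Lemma~\ref{le:DECOMP} and integrating by parts. The divergence-free conditions $d\iota(X_i)\rho=d\iota(JX_i)\rho=0$ and the integral normalizations in Step~4 kill the mixed $X$--$F$ and $X$--$G$ pairings, leaving $-\tfrac{2}{\hbar}\int\om(X_1,X_2)\rho$ together with $\int\bigl((d^*dF_1-\tfrac{2}{\hbar}F_1)d^*dG_2-(d^*dG_1-\tfrac{2}{\hbar}G_1)d^*dF_2\bigr)\rho$. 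For the pullback formula~\eqref{eq:WPrho} I would instead start from the right-hand side: under the extra constraint $\Richat_\rho(J,\Jhat_i)\wedge\om^{\sn-1}=0$ one has $d^*dF_i=\tfrac{2}{\hbar}F_i$ is \emph{not} imposed, but the constant-scalar-curvature-type constraint forces $d\bigl(\Lambda_\rho(J,\Jhat_i)\circ J\bigr)\wedge\om^{\sn-1}$ to vanish, which by the Hodge decomposition of $\Lambda_\rho(J,\Jhat_i)=\tfrac{2}{\hbar}\iota(X_i)\om+d\Phi_i-d\Psi_i\circ J$ translates into a relation letting one compute $-\tfrac{\hbar}{2}\int\Lambda_\rho(J,\Jhat_1)\wedge\Lambda_\rho(J,\Jhat_2)\wedge\om^{\sn-1}/(\sn-1)!$ and check it agrees with the $X_i,F_i,G_i$ expansion (here one uses $\iota(X_i)\om\wedge\iota(X_j)\om\wedge\om^{\sn-1}/(\sn-1)!=\om(X_i,X_j)\rho$ and that the $d\Phi$, $d\Psi$ contributions assemble into the $F,G$ block).

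Having the formulas, the Kähler statement comes almost for free from the identification $\Om^\WP_J(\Jhat_1,\Jhat_2)=\tfrac12\int\trace(A_1JA_2)\rho$ with $A_i$ in the $\bar\p_J$-harmonic, symmetric, $\bar\p_J^*$-closed subspace: this subspace is the model tangent space $T_{[J]}\sT_c(M)$ (by~\eqref{eq:TTEICHKE} together with Lemma~\ref{le:DECOMP}), it is invariant under the almost complex structure $\Jhat\mapsto-J\Jhat$ (here crucially $J$ preserves the harmonic space because $\Ric=\om/\hbar$ makes the relevant Weitzenböck operator commute with $J$ — the Fano sign issue noted in Remark~\ref{rmk:COMPATIBLE} is the place one must be careful, but on the harmonic space the obstruction vanishes), and the symmetric bilinear form $(A_1,A_2)\mapsto\tfrac12\int\trace(A_1JA_2\cdot(-J))\rho=-\tfrac12\int\trace(A_1A_2)\rho$ together with $\tfrac12\int\trace(A_1(-JA_2)\cdot J)\rho$ is (negative of) the $L^2$ inner product on a space where now it is definite — in contrast to the Calabi--Yau case — because the $h^{2,0}$ negative directions are absent (the harmonic representatives with $\bar\p_J^*A=0$ and $A=A^*$ in the KE case have no negative part). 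Thus $\Om^\WP$ is a Kähler form. Equivariance under $\Diff_c(M)/\Diff_0(M)$ follows from~\eqref{eq:RICphi}, the naturality of $\Lambda_\rho$ and of the Hodge decomposition under volume-preserving diffeomorphisms, plus the fact that $\iota_\rho$ intertwines the two models. The main obstacle I anticipate is the self-adjointness/positivity step: proving that the $\bar\p_J$-harmonic space in the K\"ahler--Einstein case is invariant under $\Jhat\mapsto\Jhat^*$ and that this forces the pairing to be definite. In the Ricci-flat case this is \cite[Lemma~3.10]{GPST}; in the KE case with $\hbar<0$ it should follow from the Weitzenböck identity~\eqref{eq:WB}, but the Fano case $\hbar>0$ requires the full strength of the harmonicity constraint (essentially the linearized K\"ahler--Einstein operator being nonnegative on the relevant space), and getting the signs and the orthogonality of the four summands in~\eqref{eq:DECOMP1} exactly right is where the real work lies.
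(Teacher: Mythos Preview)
Your computational plan for the identity~\eqref{eq:TEICHSYMPKE} is essentially the paper's: expand $\Jhat_i=\cL_{Z_i}J+A_i$, kill the cross terms via~\eqref{eq:LAMBDA2} and $\Lambda_\rho(J,A_i)=0$, and evaluate the $\cL_{Z_1}J\cdot J\cdot\cL_{Z_2}J$ block using the pairwise $L^2$-orthogonality of the four summands together with~\eqref{eq:LHJ}. (Your opening sentence misstates what is being shown --- $\tfrac12\int\trace(\Jhat_1J\Jhat_2)\rho$ does \emph{not} reduce to $\tfrac12\int\trace(A_1JA_2)\rho$; the $\cL_ZJ$ block produces precisely the correction terms on the right of~\eqref{eq:TEICHSYMPKE}, as you yourself then say.) For~\eqref{eq:WPrho} the paper is more direct than your route: the constraint $\Richat_\rho(J,\Jhat)\wedge\om^{\sn-1}=0$ applied to~\eqref{eq:DECOMP3} forces $d(\tfrac{2}{\hbar}G-d^*dG)=0$, so the $G$-terms in~\eqref{eq:TEICHSYMPKE} drop out and~\eqref{eq:WPrho} follows.

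There is, however, a genuine gap: you never address \emph{closedness} of $\Om^\WP$. Nondegeneracy and $J$-compatibility do not make a K\"ahler form; you must show $d\Om^\WP=0$. The paper does this on the $\rho$-model by differentiating~\eqref{eq:WPrho} along a two-parameter family $J_{s,t}\in\sJ_{\KE,c}(M,\rho)$ and invoking the variation identities
\[
\p_t\om_{\rho,J}=\tfrac{\hbar}{2}\,d\Lambda_\rho(J,\p_tJ),
\qquad
\p_s\Lambda_\rho(J,\p_tJ)-\p_t\Lambda_\rho(J,\p_sJ)
= -\tfrac12\,d\trace\bigl((\p_sJ)J(\p_tJ)\bigr)
\]
from~\cite[Theorem~2.7]{GPST}; these make the derivatives of the two integrands in~\eqref{eq:WPrho} cancel. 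Closedness then transfers to $\sT_c(M)$ via the diffeomorphism $\iota_\rho$. Nothing in your outline supplies this step.

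Finally, you are over-engineering the positivity. Once Lemma~\ref{le:DECOMP} hands you $A=A^*$, you have $\Om^\WP_J(A,-JA)=\tfrac12\int_M\trace(A^2)\rho\ge 0$ with equality iff $A=0$; this is the paper's ``follows directly from the definition.'' No $h^{2,0}$ count, no Weitzenb\"ock argument, and no Fano/general-type case split is needed. The sign worry you import from Remark~\ref{rmk:COMPATIBLE} concerns the restriction of~\eqref{eq:OMSE} to compatible pairs, which is a different $2$-form from $\Om^\WP$.
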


\begin{proof}
The second equality in~\eqref{eq:TEICHSYMPKE} follows from~\eqref{eq:LHJ}
and the fact that the terms~$\cL_XJ$, $\cL_{v_F}J$, $\cL_{Jv_G}J$, $A$
in Lemma~\ref{le:DECOMP} are pairwise $L^2$ orthogonal.
Also, for each~${J\in\sJ_{\KE,c}(M)}$, it follows directly
from the definition that the skew-symmetric bilinear form~$\Om^\WP_J$
on the kernel of~$\bar\p_J$ in~$\Om^{0,1}_J(M,TM)$
descends to a nondegenerate form on the quotient
space~${T_{[J]}\sT_c(M)=\ker\bar\p_J/\im\bar\p_J}$
that is compatible with the linear complex
structure~${[\Jhat]\mapsto[-J\Jhat]}$.
That the $2$-form~$\Om^\WP$ descends to~$\sT_c(M)$
and is preserved by the action of the mapping class group
follows from the naturality of the decomposition in Lemma~\ref{le:DECOMP}.
Its pullback to~$\sT_c(M,\rho)$ is given by~\eqref{eq:WPrho}, 
because~${\Richat_\rho(J,\Jhat)\wedge\om^{\sn-1}=0}$ 
implies~${d(\tfrac{2}{\hbar}G-d^*dG)=0}$ in Lemma~\ref{le:DECOMP}.   
That this pullback is closed follows from 
the equations $\p_t\om_{\rho,J} = \tfrac{\hbar}{2}d\Lambda_\rho(J,\p_tJ)$
and~${\p_s\Lambda_\rho(J,\p_tJ)-\p_t\Lambda_\rho(J,\p_sJ)
= - \tfrac{1}{2}d\trace((\p_sJ)J(\p_tJ))}$
(in~\cite[Theorem~2.7]{GPST}) for every smooth 
map~${\R^2\to\sJ_{\KE,c}(M,\rho):(s,t)\mapsto J_{s,t}}$. 
\end{proof}

The space~$\sK(M,a,\rho)$ of all  K\"ahler
pairs~$(\om,J)$ that satisfy~${[\om]=a:=2\pi\hbar c}$ and~$\om^\sn/\sn!=\rho$
is not a symplectic submanifold of $\sP(M,a,\rho)$ whenever ${\dim(M)>2}$.
In fact, using Lemma~\ref{le:DECOMP} one can show that the kernel 
of the restriction of the $2$-form~\eqref{eq:OMSE} to~$\sK(M,a,\rho)$ 
at every K\"ahler--Einstein pair~${(\om,J)\in\sK(M,a,\rho)}$ 
with~${\Ric_{\rho,J}=\om/\hbar}$
is the set of all pairs~$(\cL_X\om,\cL_XJ)$ such 
that both~$X$ and~$JX$ are divergence-free.
Nevertheless, if~${H^1(M;\R)=0}$, then
Theorem~\ref{thm:WPKE} shows that
the regular part of~$\sT_c(M,\rho)$ embeds
via~${[J]\mapsto[\om_{\rho,J},J]}$ as a symplectic submanifold
into the symplectic quotient~$\sW_\SE(M,a,\rho)$ in~\eqref{eq:WSE}. 
The condition~${H^1(M;\R)=0}$ is necessarily satisfied in the 
Fano case~${\hbar>0}$. In the case~${\hbar<0}$ the group~$\Diff_0(M)$ 
acts on~${\sJ_{\KE,c}(M)=\sJ_{\INT,c}(M)}$ with finite isotropy, the quotient 
$$
\sZ_c(M,\rho) := \sJ_{\KE,c}(M,\rho)/\Diff^\ex(M,\rho)
$$
embeds symplectically into~$\sW_\SE(M,a,\rho)$ via~${[J]\mapsto[\om_{\rho,J},J]}$,
and the space~$\sZ_c(M,\rho)$ fibers over~$\sT_c(M,\rho)$ with 
symplectic fibers.  If the action of the group~$\Diff_0(M,\rho)$ 
on~$\sJ_{\KE,c}(M,\rho)$ is free, then each fiber is isomorphic 
to~$H^{2\sn-1}(M;\R)/\Gamma_\rho$, where~$\Gamma_\rho$ is the image of the 
flux homomorphism~${\Flux_\rho:\pi_1(\Diff_0(M,\rho))\to H^{2\sn-1}(M;\R)}$.

%%%%%%%%%%%%%%%%%%%%%%%%%%%%%%%%
%%%%%%%% Subsection 2.3 %%%%%%%%
%%%%%%%%%%%%%%%%%%%%%%%%%%%%%%%%
 
\subsection{Constant scalar curvature K\"ahler metrics}
\label{subsec:TEICHCSCK}

Let~$(M,\om)$ be a closed connected symplectic $2\sn$-manifold
with the volume form~${\rho:=\om^\sn/\sn!}$ and 
denote by~$\sJ_\INT(M,\om)$ the space of all 
complex structures on~$M$ that are compatible with~$\om$.   
This space is connected for rational and ruled surfaces~\cite{AGK2},
however, in general this is an open question.
The regular part of~$\sJ_\INT(M,\om)$ is an infinite-dimensional 
K\"ahler submanifold of~$\sJ(M,\om)$ whose tangent space
at a regular element~$J$~is
$$
T_J\sJ_\INT(M,\om) 
= \left\{J\in\Om^{0,1}_J(M,TM)\,\big|\,\Jhat=\Jhat^*,\,\bar\p_J\Jhat=0\right\}.
$$
Consider the symplectic quotient
\begin{equation}\label{eq:ZCSCK}
\begin{split}
\sZ(M,\om)
&:=
\sJ_\cscK(M,\om)/\Ham(M,\om), \\
\sJ_\cscK(M,\om)
&:=
\bigl\{J\in\sJ_\INT(M,\om)\,|\,S_{\om,J}=c_\om\bigr\}.
\end{split}
\end{equation}
The regular part of this space is a complex submanifold 
of the infinite-dimensional quotient~$\sW_\csc(M,\om)$ 
in~\eqref{eq:WSCALAR} with the tangent spaces
\begin{equation}\label{eq:TZCSCK}
\begin{split}
T_{[J]_\om}\sZ(M,\om) 
= \frac{\left\{
\Jhat\in\Om^{0,1}_J(M,TM)\,\big|\,
\bar\p_J\Jhat=0,\,\Jhat=\Jhat^*,\,\Shat_\om(J,\Jhat)=0
\right\}}
{\left\{\cL_{v_H}J\,\big|\,H\in\Om^0(M)\right\}}
\end{split}
\end{equation}
for~${J\in\sJ_\cscK(M,\om)}$. So~$\sZ(M,\om)$ inherits
the K\"ahler structure of~$\sW_\csc(M,\om)$ with the 
symplectic from~\eqref{eq:OMom} and the complex 
structure~${[\Jhat]_\om\mapsto[-J(\Jhat-\cL_{v_H}J)]_\om}$, 
where~${H\in\Om^0(M)}$ 
satisfies~${\Shat_\om(J,J\Jhat-J\cL_{v_H}J)=0}$
as in Proposition~\ref{prop:HARMONIC}. 
To describe~$\sZ(M,\om)$ as a complex quotient, 
we digress briefly into GIT.

\begin{remark}[{\bf Geometric invariant theory}]\label{rmk:GIT}
Let~$(X,\om,J)$ be a closed K\"ahler manifold and let~$\rG$ be 
a compact Lie group which acts on~$X$ by K\"ahler isometries. 
Assume that the Lie algebra~${\fg:=\Lie(\rG)}$ 
is equipped with an invariant inner product and 
that the action is Hamiltonian and is generated
by an equivariant moment map~${\mu:X\to\fg}$. 
Then the complexified group~$\rG^c$ acts on~$X$ 
by holomorphic diffeomorphisms and the symplectic 
quotient~$X\dslash\rG:=\mu^{-1}(0)/\rG$ is naturally isomorphic
to the complex quotient~$X^\ps/\rG^c$ of the 
set
$
X^\ps:=\{x\in X\,|\,\rG^c(x)\cap\mu^{-1}(0)\ne\emptyset\}
$
of {\bf $\mu$-polystable} elements of~$X$ by the complexified group.
The set of $\mu$-polystable elements can be characterized in terms 
of {\bf Mumford's numerical invariants}
\begin{equation}\label{eq:WEIGHT}
w_\mu(x,\xi) := \lim_{t\to\infty}\inner{\mu(\exp(\bi t\xi)x)}{\xi}
\end{equation}
associated to~${x\in X}$ and~${0\ne\xi\in\fg}$. The 
{\bf moment-weight inequality} asserts that
\begin{equation}\label{eq:MW}
\sup_{0\ne\xi\in\fg}\frac{-w_\mu(x,\xi)}{\abs{\xi}}
\le \inf_{g\in\rG^c}\abs{\mu(gx)}
\end{equation}
and that equality holds whenever the right hand side is positive.
An immediate consequence is that an element~${x\in X}$ is 
{\bf $\mu$-semistable} (i.e.\ ${\overline{\rG^c(x)}\cap\mu^{-1}(0)\ne\emptyset}$)
if and only if~${w_\mu(x,\xi)\ge0}$ for all~${\xi\in\fg\setminus\{0\}}$.
For $\mu$-polystability the additional condition is required 
that~${w_\mu(x,\xi)=0}$ implies~${\lim_{t\to\infty}\exp(\bi t\xi)x\in\rG^c(x)}$.
This is the content of the {\bf Hilbert--Mumford criterion}.  Its proof is based on the 
study of the gradient flow of the moment map 
squared~${f:=\tfrac{1}{2}\abs{\mu}^2:X\to\R}$
and the related gradient flow of the 
{\bf Kempf--Ness function}~${\Phi_x:\rG^c/\rG\to\R}$, defined by
\begin{equation}\label{eq:KN}  
d\Phi_x(g)\ghat = -\inner{\mu(g^{-1}x)}{\Im(g^{-1}\ghat)},\qquad
\Phi_x(u) = 0
\end{equation}
for~${\ghat\in T_g\rG^c}$ and~${u\in\rG}$.
The symmetric space~$\rG^c/\rG$ is a Hadamard space and
the Kempf--Ness function is convex along geodesics.
By the {\bf Kempf--Ness Theorem}
it is bounded below if and only if~$x$ is $\mu$-semistable.
For an exposition see~\cite{GRS}. 
\end{remark}

\begin{remark}[{\bf The space of K\"ahler potentials}]\label{rmk:KPOT}
It was noted by Donaldson in his landmark paper~\cite{DON2} that much
of geometric invariant theory carries over (in part conjecturally) 
to the infinite-dimensional setting where~$X$ is replaced 
by the space~$\sJ(M,\om)$ and the compact 
Lie group~$\rG$ by~${\sG=\Ham(M,\om)}$.  
While in this situation there is no complexified group
there do exist complexified group orbits.
In the integrable case the complexified group orbit 
of~${J\in\sJ_\INT(M,\om)}$ is the space~$\sG^c(J)$
of all elements~${J'\in\sJ_\INT(M,\om)}$ that are {\bf exact isotopic to~$J$}
(i.e.\ there exists a smooth path~${[0,1]\to\sJ_\INT(M,\om):s\mapsto J_s}$  
and a smooth family of vector fields~${[0,1]\to\Vect(M,\om):s\mapsto v_s}$
such that~${J_0=J}$, ${J_1=J'}$, ${\p_sJ_s=\cL_{v_s}J_s}$, 
and~${\iota(v_s)\om=d\Phi_s - d\Psi_s\circ J_s}$
for~${\Phi_s,\Psi_s\in\Om^0(M)}$).  In this situation 
the role of the symmetric space~$\rG^c/\rG$ is played by the 
{\bf space of K\"ahler potentials}
\begin{equation}\label{eq:KPOT}
\sH_J := \left\{h\in\Om^0(M)\,\bigg|\,
\begin{array}{l}
\mbox{The $2$-form }\om_h:=\om+\tfrac{1}{2}d(dh\circ J) = \om - \bi\p\bar\p h \\
\mbox{satisfies }\om_h(\xhat,J\xhat)>0\;\forall\;0\ne\xhat\in TM
\end{array}\right\}.
\end{equation}
This space has been studied by Calabi, Chen~\cite{CALABI1,CC,C,C2},
Mabuchi~\cite{MABUCHI1,MABUCHI2}, Semmes~\cite{S}, 
Donaldson~\cite{DON2} and others.  
It is an infinite-dimensional symmetric space of nonpositive 
sectional curvature with the Mabuchi metric 
\begin{equation}\label{eq:KPOTmetric}
\inner{\hhat_1}{\hhat_2}_h 
:= \int_M\hhat_1\hhat_2\frac{\om_h^\sn}{\sn!}
\end{equation}
and geodesics are the solutions~${t\mapsto h_t}$
of the {\bf Monge--Amp\`ere equation}
\begin{equation}\label{eq:KPOTgeo}
\p_t\p_th_t + \tfrac{1}{2}\Abs{d\p_th_t}^2_{h_t} = 0.
\end{equation}
In~\cite{C} Chen proved that any two elements 
of~$\sH_J$ can be joined by a weak~$C^{1,1}$ geodesic.  
As noted by Donaldson~\cite{DON1,DON2}, 
the analogues of Mumford's numerical invariants in this setting
are the {\bf Futaki invariants}~\cite{FUTAKI}, the analogue of the 
Kempf--Ness function is the {\bf Mabuchi functional}~\cite{MABUCHI1}
${\cM_J:\sH_J\to\R}$ defined by 
\begin{equation}\label{eq:MABUCHI}
d\cM_J(h)\hhat = \int_M\bigl(S_{\om_h,J}-c_\om\bigr)\hhat\frac{\om_h^\sn}{\sn!},
\qquad \cM_J(0)=0, 
\end{equation}
and the analogue of the gradient flow of the moment map squared
is the Calabi flow.  After earlier results by Donaldson~\cite{DONcsc1,DONcsc2},
Mabuchi~\cite{MABUCHI3}, and Chen--Tian~\cite{CT} it was shown by 
Berman--Berndtsson~\cite{BB} that the Mabuchi functional 
is convex along weak geodesics, 
that every K\"ahler potential~${h\in\sH_J}$ with  
constant scalar curvature~${S_{\om_h,J}=c_\om}$ 
minimizes the Mabuchi functional, 
and that constant scalar curvature K\"ahler metrics 
are unique up to holomorphic diffeomorphism, 
i.e.\ if two K\"ahler potentials~${h,h'\in\sH_J}$ 
have constant scalar curvature~${S_{\om_h,J}=S_{\om_{h'},J}=c_\om}$, 
then there exists a holomorphic diffeomorphism~${\psi\in\Aut_0(M,J)}$
such that~${\om_{h'}=\psi^*\om_h}$.  

In the present setting the analogue of the Hilbert--Mumford criterion 
is the Yau--Tian--Donaldson conjecture~\cite{DON4,TIAN0,YAU2}
which relates the existence of a constant scalar curvature 
K\"ahler potential to K-polystability.  
For Fano manifolds it was confirmed by 
Chen--Donaldson--Sun~\cite{CDS0,CDS123},
while in general it is an open question.
\end{remark}

Remark~\ref{rmk:KPOT} shows that, according to the YTD conjecture, 
the space~$\sZ(M,\om)$ can be expressed as the 
quotient~${\sZ_K(M,\om) := \sJ_K(M,\om)/\!\sim}$,
where~$\sJ_K(M,\om)$ is the space of K-polystable complex 
structures that are compatible with~$\om$, and the equivalence 
relation is exact isotopy as in Remark~\ref{rmk:KPOT}. 
The formal (Zariski type) tangent space of~$\sZ_K(M,\om)$ 
at the equivalence class of an element~${J\in\sJ_K(M,\om)}$ 
is the quotient~${T_{[J]}\sZ_K(M,\om)
=T_J\sJ_\INT(M,\om)/\{\cL_{v_F+Jv_G}J\,|\,F,G\in\Om^0(M)\}}$
and the complex structure is~${[\Jhat]\mapsto[-J\Jhat]}$.

It is also of interest to consider the {\bf Teichm\"uller space of 
constant scalar curvature K\"ahler metrics}, defined by 
\begin{equation}\label{eq:TEICHCSCK}
\begin{split}
\sT(M,\om)  :=  \sJ_\cscK(M,\om)/\Symp_0(M,\om).
\end{split}
\end{equation}
If~${H^1(M;\R)=0}$, then~${\Ham(M,\om)=\Symp_0(M,\om)}$
and hence the regular part of the Teich\-m\"uller space~${\sT(M,\om)=\sZ(M,\om)}$ is K\"ahler.
The two spaces also agree in the Calabi--Yau case, 
where~$\Symp_0(M,\om)/\Ham(M,\om)$ 
acts trivially on~$\sZ(M,\om)$.
In the K\"ahler--Einstein case with~${2\pi\hbar c_1^\R(\om)=[\om]}$
and~${\hbar<0}$ the group~$\Symp_0(M,\om)$ acts on~$\sJ_\cscK(M,\om)$
with finite isotropy, the Teich\-m\"uller space~$\sT(M,\om)$ carries 
a K\"ahler form given by~\eqref{eq:WPrho} with~${\om_{\rho,J}=\om}$,
and~$\sZ(M,\om)$ fibers over~$\sT(M,\om)$ with symplectic fibers.  
If the action of~$\Symp_0(M,\om)$ on~$\sJ_\cscK(M,\om)$ is free, 
then each fiber is isomorphic to the space~${H^1(M;\R)/\Gamma_\om}$,
where~$\Gamma_\om$ is the image of the flux
homomorphism~${\Flux_\om:\pi_1(\Symp_0(M,\om))\to H^1(M;\R)}$.
 
\begin{remark}\label{rmk:TEICHomrho}
Fix a symplectic form~$\om$ on~$M$ that admits a compatible 
complex structure~$J$ with~${S_{\om,J}=c_\om}$ 
and~${\Aut(M,J)\cap\Diff_0(M)=\Aut_0(M,J)}$.  Define
$$
\rho:= \om^\sn/\sn!,\qquad a=[\om]\in H^2(M;\R),\qquad
c := c_1^\R(\om)\in H^2(M;\R).
$$
Assume the Calabi--Yau case~${c=0}$
and consider the {\bf polarized Teichm\"uller space}
\begin{equation}\label{eq:TEICHCYa}
\begin{split}
\sT_{0,a}(M,\rho) 
:= \left\{J\in\sJ_{\INT,0}(M)\,|\,\Ric_{\rho,J}=0,\,a\in\cK_J\right\}/\Diff_0(M,\rho)
\end{split}
\end{equation}
of all isotopy classes of Ricci-flat complex structures
that contain the cohomology class~$a$ in their K\"ahler cone. 
This space is Hausdorff and the Weil--Petersson metric 
on~$\sT_{0,a}(M,\rho)$ is K\"ahler.
Moreover, there is a natural holomorphic map
$$
\iota_\om:\sT(M,\om)\to\sT_{0,a}(M,\rho)
$$
which pulls back the Weil--Petersson symplectic form 
on~$\sT_{0,a}(M,\rho)$ in Theorem~\ref{thm:WPCY}
to the symplectic form~\eqref{eq:OMom} on~$\sT(M,\om)$. 
The map~$\iota_\om$ need not be injective or surjective.  
It is injective if and only 
if~${\Symp(M,\om)\cap\Diff_0(M)=\Symp_0(M,\om)}$.
In~\cite{SEIDEL3} Seidel found many
examples of symplectic four-manifolds that admit 
symplectomorphisms that are smoothly, but not symplectically, 
isotopic to the identity, including K3-surfaces with embedded 
Lagrangian spheres. 
The map~$\iota_\om$ is surjective if and only if the space~$\sS_{0,a}$
of symplectic forms in the class~$a$ with real first Chern class zero
that admit compatible complex structures is connected. 
By a theorem of Hajduk--Tralle~\cite{HT} the space~$\sS_{0,a}$ 
is disconnected for the $8k$-torus with~${k\ge1}$.

Now assume the K\"ahler--Einstein case~${2\pi\hbar c = a}$
for some nonzero real number~$\hbar$.  Then there is again 
a natural holomorphic map
$$
\iota_\om:\sT(M,\om)\to\sT_c(M,\rho)
$$
which pulls back the Weil--Petersson symplectic form 
on~$\sT_c(M,\rho)$ in Theorem~\ref{thm:WPKE}
to the symplectic form~\eqref{eq:WPrho} on~$\sT(M,\om)$. 
As before, this map need not be injective or surjective.
Seidel's examples in dimension four include as Fano mani\-folds 
the $k$-fold blowup of the projective plane with~${5\le k\le8}$
and many K\"ahler surfaces of general type, 
so in these cases the map~$\iota_\om$ is not injective.
It is surjective if and only if the space~$\sS_{c,a}$ 
of symplectic forms in the class~$a$ with first Chern class~$c$ 
that admit compatible K-polystable complex structures is connected.
By a theorem of Randal-Williams~\cite{ORL}  
every complete intersection~$M$ with~${\dim(M)=16k\ge16}$ 
and~${\dim(H^{8k}(M;\R))\ge6}$ admits a diffeomorphism~$\phi$
that acts as the identity on homology
such that~$\om$ is not isotopic to~$\phi^*\om$ 
for every symplectic form~$\om$.  
This includes  K\"ahler--Einstein examples, 
and in these cases~$\sS_{c,a}$ is disconnected.
\end{remark}

%%%%%%%%%%%%%%%%%%%%%%%%%%%%%%%%
%%%%%%%% Section 3 %%%%%%%%%%%%%
%%%%%%%%%%%%%%%%%%%%%%%%%%%%%%%%

\section{Fano manifolds}\label{sec:FANO}

This section explains how the symplectic form introduced by
Donaldson~\cite{DON5} on the space of Fano complex structures
fits into the present setup.   We begin by giving another proof of nondegeneracy,
and then discuss Berndtsson convexity for the Ding functional
and the Donaldson--K\"ahler--Ricci flow.

%%%%%%%%%%%%%%%%%%%%%%%%%%%%%%%%
%%%%%%%% Subsection 3.1 %%%%%%%%
%%%%%%%%%%%%%%%%%%%%%%%%%%%%%%%%

\subsection{The Donaldson symplectic form}\label{subsec:DON}
Let~$(M,\om)$ be a closed connected symplectic $2\sn$-manifold 
that satisfies the Fano condition
\begin{equation}\label{eq:FANO}
2\pi c_1^\R(\om) = [\om] \in H^2(M;\R).
\end{equation}
As in Subsection~\ref{subsec:SCALAR} we denote by~$v_F$ 
the Hamiltonian vector field of~$F$ and 
by~${\{F,G\}}$ the Poisson bracket 
of~${F,G\in\Om^0(M)}$.  Let~$\sJ_\INT(M,\om)$ be the space 
of $\om$-compatible complex structures on~$M$. 
If this space is nonempty, then~$M$ is simply connected.
Throughout we will ignore all regularity issues 
and treat~$\sJ_\INT(M,\om)$ as a submanifold 
of~$\sJ(M,\om)$ whose tangent space 
at~${J\in\sJ_\INT(M,\om)}$ is 
$$
T_J\sJ_\INT(M,\om) 
= \left\{\Jhat\in\Om^{0,1}_J(M,TM)\,\big|\,
\bar\p_J\Jhat=0,\,\Jhat=\Jhat^*\right\}.
$$
This is a complex subspace of~$T_J\sJ(M,\om)$
and so inherits the symplectic form~\eqref{eq:OMom}
from the ambient K\"ahler manifold~$\sJ(M,\om)$. 
In~\cite{DON5} Donaldson introduced another symplectic form 
on~$\sJ_\INT(M,\om)$ which we explain next. 

It follows from~\eqref{eq:RICf} that,
for every complex structure~${J\in\sJ_\INT(M,\om)}$, 
there exists a unique positive volume 
form~${\rho_J\in\Om^{2\sn}(M)}$ that satisfies
\begin{equation}\label{eq:rhoJFANO}
\Ric_{\rho_J,J}=\om,\qquad\int_M\rho_J = 1.
\end{equation}
Moreover, ~$\Richat_{\rho_J}(J,\Jhat)$ is an exact $(1,1)$-form 
for every~${\Jhat\in T_J\sJ_\INT(M,\om)}$ by~\eqref{eq:rhoJFANO} 
and~\cite[Lemma~3.6]{GPST}.  Thus, by the $\p\bar\p$-lemma,
for every~${J\in\sJ_\INT(M,\om)}$  and every~${\Jhat\in T_J\sJ_\INT(M,\om)}$, 
there exist unique functions~${f,g\in\Om^0(M)}$ that satisfy
\begin{equation}\label{eq:LAMBDAfgFANO}
\Lambda_{\rho_J}(J,\Jhat) = -df\circ J + dg,\qquad
\int_Mf\rho_J = \int_Mg\rho_J=0.
\end{equation}
The {\bf Donaldson symplectic form}
on~$\sJ_\INT(M,\om)$ is defined by
\begin{equation}\label{eq:DONSYMP}
\Om_J^D(\Jhat_1,\Jhat_2) 
:= \int_M\Bigl(
\tfrac{1}{2}\trace\bigl(\Jhat_1J\Jhat_2\bigr)
- f_1g_2 + g_1f_2
\Bigr)\rho_J
\end{equation}
for~${J\in\sJ_\INT(M,\om)}$ and~${\Jhat_i\in T_J\sJ_\INT(M,\om)}$, 
where~${\rho_J,f_i,g_i}$ 
are as in~\eqref{eq:rhoJFANO} and~\eqref{eq:LAMBDAfgFANO}.
The fact that the $2$-form~\eqref{eq:DONSYMP} is nondegenerate 
is far from trivial and is one of the main results in~\cite{DON5}.   Indeed, 
as noted by Donaldson, nondegeneracy can be viewed as a reformulation 
of Berndtsson's convexity theorem~\cite{BB1,BB2} for the Ding 
functional~\cite{DING} on the space of K\"ahler potentials 
(see Subsection~\ref{subsec:DING} below). 

The symplectic form~$\Om^D$ in~\eqref{eq:DONSYMP}
is given by essentially the same formula as the Weil--Petersson symplectic 
form~$\Om^\WP$ on~$\sT_0(M)$ in~\eqref{eq:OMWP}.  In contrast to the 
Calabi--Yau case, where the lifted $2$-form on~$\sJ_{\INT,0}(M)$ 
has the kernel~$\im\bar\p_J$ on each tangent space~$T_J\sJ_{\INT,0}(M)$, 
the $2$-form~$\Om^D$ is nondegenerate in the Fano case.

The definition of the symplectic form in Donaldson's paper~\cite{DON5} 
uses the existence of a holomorphic $\sn$-form with values in a
suitable holomorphic line bundle to define the volume form 
denoted by~$\rho_J$ in~\eqref{eq:rhoJFANO}.
That the $2$-form~\eqref{eq:DONSYMP} agrees 
with the symplectic form in~\cite{DON5} (up to a factor~$1/4$)
then follows from the discussion in~\cite[Appendix~D]{GPST}.

\begin{theorem}[{\bf Donaldson~\cite{DON5}}]\label{thm:DON}
$\Om^D$ is a $\Symp(M,\om)$-in\-variant symplectic form on~${\sJ_\INT(M,\om)}$
and is compatible with the complex structure~${\Jhat\mapsto-J\Jhat}$.
The action of~$\Ham(M,\om)$ on~$\sJ_\INT(M,\om)$
is Hamiltonian and is generated by the $\Symp(M,\om)$-equi\-variant 
moment map
$
\mu:\sJ_\INT(M,\om)\to(\Om^0(M)/\R)^*, 
$
given by 
\begin{equation}\label{eq:DONMOMENT1}
\inner{\mu(J)}{H} := 2\int_MH\left(\frac{1}{V}\frac{\om^\sn}{\sn!}-\rho_J\right),\qquad
V:=\int_M\frac{\om^\sn}{\sn!},
\end{equation}
for~${J\in\sJ_\INT(M,\om)}$ and~${H\in\Om^0(M)}$,
where~${\rho_J\in\Om^{2\sn}(M)}$ 
is as in~\eqref{eq:rhoJFANO}.   Thus
\begin{equation}\label{eq:DONMOMENT2}
\Om^D_J(\Jhat,\cL_{v_H}J) 
= - 2\int_MHf \rho_J
= \inner{d\mu(J)\Jhat}{H}
\end{equation}
for~${\Jhat\in T_J\sJ_\INT(M,\om)}$ and~${H\in\Om^0(M)}$, 
where~$f$ is as in~\eqref{eq:LAMBDAfgFANO}.
\end{theorem}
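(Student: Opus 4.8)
The plan is to treat nondegeneracy as the one genuinely hard input and to derive every other assertion directly from the $\Lambda$-calculus of Proposition~\ref{prop:LAMBDA}, Remark~\ref{rmk:LAMBDA} and the variational formula~\eqref{eq:RICf}. First I would record that~\eqref{eq:DONSYMP} is well defined: by~\eqref{eq:FANO} one has $H^1(M;\R)=0$ and $H^{0,1}(M,J)=0$, and by~\eqref{eq:rhoJFANO} together with~\cite[Lemma~3.6]{GPST} the closed two-form $\Richat_{\rho_J}(J,\Jhat)=\tfrac12 d\Lambda_{\rho_J}(J,\Jhat)$ (using~\eqref{eq:LAMBDA1}) is an exact $(1,1)$-form; hence $\bar\p$ annihilates the $(0,1)$-part of $\Lambda_{\rho_J}(J,\Jhat)$, and the vanishing of $H^{0,1}$ produces the unique normalised $f,g$ with $\Lambda_{\rho_J}(J,\Jhat)=dg-df\circ J$.

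The core of the argument is the moment map identity~\eqref{eq:DONMOMENT2}, and I would split it into two computations. The first is the sub-lemma $\p_t\big|_{t=0}\rho_{J_t}=f\,\rho_J$, where $f$ is the first component of $\Jhat=\p_t\big|_{t=0}J_t$. Writing $\rho_{J_t}=e^{h_t}\rho_J$ with $h_0=0$ and $\psi:=\p_t\big|_{t=0}h_t$, differentiation of $\Ric_{\rho_{J_t},J_t}=\om$ — using~\eqref{eq:RICf} in the volume slot — gives $\Richat_{\rho_J}(J,\Jhat)+\tfrac12 d(d\psi\circ J)=0$, while~\eqref{eq:LAMBDA1} with $\Lambda_{\rho_J}(J,\Jhat)=dg-df\circ J$ gives $\Richat_{\rho_J}(J,\Jhat)=-\tfrac12 d(df\circ J)$; subtracting yields $d(d(\psi-f)\circ J)=0$, so $\psi-f$ is pluriharmonic on a compact K\"ahler manifold, hence constant, hence zero since both $\rho_J$-means vanish. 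Since $\inner{\mu(J)}{H}$ depends on $J$ only through the term $-2\int_M H\rho_J$, this immediately gives $\inner{d\mu(J)\Jhat}{H}=-2\int_M Hf\,\rho_J$.

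The second computation is $\Om^D_J(\Jhat,\cL_{v_H}J)=-2\int_M Hf\,\rho_J$. Equation~\eqref{eq:LAMBDA3} with $\Ric_{\rho_J,J}=\om$ identifies the components of $\cL_{v_H}J$ as $f_2=f_{v_H}$, $g_2=2H+f_{Jv_H}$ (up to additive constants, which are killed by the mean-zero factors in~\eqref{eq:DONSYMP}). Then I would expand~\eqref{eq:DONSYMP}: by~\eqref{eq:LAMBDA2} the Calabi term equals $\int_M(dg-df\circ J)\wedge\iota(v_H)\rho_J$; one integration by parts turns its $dg$-part into $-\int_M g f_{v_H}\rho_J$, cancelling the term $\int_M g_1 f_2\rho_J$, and (using $\beta\wedge\iota(v)\rho=\beta(v)\rho$ and a further integration by parts) turns its $df\circ J$-part into $\int_M df(Jv_H)\rho_J=-\int_M f f_{Jv_H}\rho_J$, which cancels the $f_{Jv_H}$-part of $-\int_M f_1 g_2\rho_J$; only $-2\int_M Hf\,\rho_J$ survives. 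Together with the sub-lemma this proves~\eqref{eq:DONMOMENT2}. The $\Symp(M,\om)$-invariance of $\Om^D$ and the equivariance of $\mu$ then follow by naturality, since $\phi^*\rho_J=\rho_{\phi^*J}$ for $\phi\in\Symp(M,\om)$ (by~\eqref{eq:RICphi}, $\phi^*\om=\om$ and uniqueness in~\eqref{eq:rhoJFANO}) and $f,g$ pull back accordingly.

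For compatibility with $\Jhat\mapsto-J\Jhat$: skew-symmetry of~\eqref{eq:DONSYMP} is visible, the Calabi term is invariant because $\trace\bigl((-J\Jhat_1)J(-J\Jhat_2)\bigr)=\trace(\Jhat_1J\Jhat_2)$, and $\Jhat\mapsto-J\Jhat$ interchanges the components as $(f,g)\mapsto(g,-f)$ — the same structural property of $\Lambda$ already used for~\eqref{eq:OMWP} — so the correction term is unchanged; taking $\Jhat_2=-J\Jhat_1$ gives $\Om^D_J(\Jhat,-J\Jhat)=\tfrac12\Norm{\Jhat}^2+\int_M(f^2+g^2)\rho_J$, which is positive for $\Jhat\ne0$. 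Closedness of $\Om^D$ I would obtain exactly as in the proof of Theorem~\ref{thm:WPKE}, from the covariance identities of~\cite[Theorem~2.7]{GPST} together with the sub-lemma $\p_t\rho_{J_t}=f_t\,\rho_{J_t}$. The hard part — the only step not reducible to the $\Lambda$-calculus — will be nondegeneracy: this is Donaldson's theorem~\cite{DON5}, equivalent to Berndtsson's convexity theorem~\cite{BB1,BB2} for the Ding functional, which is the subject of Subsection~\ref{subsec:DING}; the fact that~\eqref{eq:DONSYMP} coincides with Donaldson's form up to the factor $1/4$ is~\cite[Appendix~D]{GPST}.
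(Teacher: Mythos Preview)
Your moment-map computation and the variational sub-lemma $\p_t\rho_{J_t}=f\rho_J$ are correct and match the paper's Lemma~\ref{le:THETAHAT} and its derivation of~\eqref{eq:DONMOMENT2}. The closedness sketch is also along the right lines.

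The genuine gap is in your compatibility paragraph, and it is precisely the heart of the theorem. Your transformation rule is wrong: under $\Jhat\mapsto-J\Jhat$ the components transform as $(f,g)\mapsto(-g,f)$, not $(g,-f)$. You can check this on $\Jhat=\cL_{v_F+Jv_G}J$: by integrability $-J\Jhat=\cL_{v_G+Jv_{-F}}J$, and plugging $(F',G')=(G,-F)$ into Lemma~\ref{le:LB}(ii) gives $(f',g')=(-g,f)$. With the correct rule the correction term in~\eqref{eq:DONSYMP} becomes $-f_1f_2-g_1g_2$ (this is exactly~\eqref{eq:DONmetric}), so
\[
\Om^D_J(\Jhat,-J\Jhat)=\int_M\Bigl(\tfrac12\trace(\Jhat^2)-f^2-g^2\Bigr)\rho_J,
\]
with a \emph{minus} sign, not a plus. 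Positivity of this expression is not a sum of squares; it is exactly the Berndtsson inequality. The paper proves it by decomposing $\Jhat=\cL_{v_F+Jv_G}J+A$ with $\Lambda_{\rho_J}(J,A)=0$ (Lemma~\ref{le:DEC}), rewriting the quantity as $\tfrac12\norm{A}^2+2\int_M(|v_F+Jv_G|^2-2F^2-2G^2)\rho_J$ via~\eqref{eq:FGA}, and then invoking Lemma~\ref{le:BERNDTSSON}.

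Your proposal is internally inconsistent on this point: you first claim positivity is manifest, then say nondegeneracy is the hard Berndtsson step. But $\Om^D_J(\Jhat,-J\Jhat)>0$ for $\Jhat\ne0$ \emph{is} nondegeneracy (and compatibility); the two cannot be separated. The same sign governs the Calabi--Yau form~\eqref{eq:OMWP}, where the associated metric is genuinely indefinite (negative part of complex dimension $h^{2,0}$), so your ``same structural property already used for~\eqref{eq:OMWP}'' would have proved a false statement there as well.
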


\begin{remark}[{\bf Weil--Petersson symplectic form}]\label{rmk:WP}
The zero set of the moment map in Theorem~\ref{thm:DON} 
is the space
$$
\sJ_\KE(M,\om)
:=
\left\{J\in\sJ_\INT(M,\om)\,|\,\Ric_{\om^\sn/\sn!,J}=\om\right\}
=
\sJ_\cscK(M,\om)
$$
of K\"ahler--Einstein complex structures compatible with~$\om$.
Since~$M$ is simply connected, the  
quotient~${\sT_\KE(M,\om):=\sJ_\KE(M,\om)/\Ham(M,\om)=\sT(M,\om)}$ 
is the Teich\-m\"uller space in~\eqref{eq:TEICHCSCK} and
the symplectic form on this space induced 
by~\eqref{eq:DONSYMP} is $1/V$ times the Weil--Petersson
symplectic form induced by~\eqref{eq:OMom}.
For the relation to the Teichm\"uller space~${\sT_c(M,\rho)\cong\sT_c(M)}$
in Theorem~\ref{thm:WPKE} see Remark~\ref{rmk:TEICHomrho}.
\end{remark}

Below we give a proof of nondegeneracy of~\eqref{eq:DONSYMP} 
which amounts to translating the argument in~\cite{DON5} 
into our notation. The heart of the proof is Lemma~\ref{le:BERNDTSSON}.

\begin{definition}\label{def:THETAJ}
Fix a complex structure~${J\in\sJ_\INT(M,\om)}$
and let~${\rho:=\om^\sn/\sn!}$.
The {\bf K\"ahler--Ricci potential} of $J$ is the 
function $\Theta_{\om,J}:=\Theta_J:M\to(0,\infty)$ defined by~${\Theta_J := \rho_J/\rho}$.
Hence~${\Ric_{\Theta_J\rho,J}=\om}$ by~\eqref{eq:rhoJFANO}
and so by~\eqref{eq:RICf}, we have
\begin{equation}\label{eq:THETAJ}
\tfrac{1}{2}d(d\log(\Theta_J)\circ J) = \om-\Ric_{\rho,J},\qquad
\int_M\Theta_J\rho = 1.
\end{equation}
Thus~${\Theta_J=1/V}$  if and only 
if~$(M,\om,J)$ is a K\"ahler--Einstein manifold.
Denote by~${d^*d:\Om^0(M)\to\Om^0(M)}$ the Laplace--Beltrami operator 
of the Riemannian metric~${\inner{\cdot}{\cdot}:=\om(\cdot,J\cdot)}$ 
and define the linear operators~${\bL,\bB:\Om^0(M)\to\Om^0(M)}$ by
\begin{equation}\label{eq:LB}
\bL F := d^*dF-\frac{\inner{v_{\Theta_J}}{v_F}}{\Theta_J},\qquad
\bB F := \frac{\{\Theta_J,F\}}{\Theta_J}
\end{equation}
for~${F\in\Om^0(M)}$. Thus~$\bL$ is a self-adjoint Fredholm operator 
and~$\bB$ is skew-adjoint with respect to the $L^2$ inner 
product~${\inner{F}{G}_J:=\int_MFG\rho_J}$ on~$\Om^0(M)$.
\end{definition}

\begin{lemma}[{\bf K\"ahler--Ricci potential}]\label{le:THETAHAT}
Choose elements $J\in\sJ_\INT(M,\om)$ and~${\Jhat\in T_J\sJ_\INT(M,\om)}$
and let~$f$ and~$\Theta_J$ be as in~\eqref{eq:LAMBDAfgFANO}
and~\eqref{eq:THETAJ}.  Then
\begin{equation}\label{eq:THETAHAT}
\Thetahat_\om(J,\Jhat) 
:= \left.\tfrac{\p}{\p t}\right|_{t=0}\Theta_{J_t} 
= f\Theta_J
\end{equation}
for every smooth path~${\R\to\sJ_\INT(M,\om):t\mapsto J_t}$
with~${J_0=J}$ and~${\left.\tfrac{\p}{\p t}\right|_{t=0}J_t=\Jhat}$.
\end{lemma}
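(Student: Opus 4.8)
The plan is to differentiate the defining identity $\Ric_{\rho_J,J}=\om$ along the given path $J_t$, taking as the reference volume form the base-point volume form $\rho_J$ itself rather than the fixed $\rho=\om^\sn/\sn!$. Write $\rho_{J_t}=e^{\psi_t}\rho_J$ with $\psi_t:=\log(\Theta_{J_t}/\Theta_J)$, so that $\psi_0=0$ and $\partial_t|_{t=0}\psi_t=\Thetahat_\om(J,\Jhat)/\Theta_J$. (That $t\mapsto\rho_{J_t}$, equivalently $t\mapsto\Theta_{J_t}$, is differentiable is presupposed by the statement and follows from the implicit function theorem applied to the elliptic equation defining $\rho_{J_t}$.) By \eqref{eq:rhoJFANO} and \eqref{eq:RICf},
$$\om=\Ric_{e^{\psi_t}\rho_J,J_t}=\Ric_{\rho_J,J_t}+\tfrac12 d\bigl(d\psi_t\circ J_t\bigr).$$

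Differentiating this at $t=0$ and using $\psi_0=0$ — which is precisely why the cross term $d(d\psi_0\circ\Jhat)$ vanishes and why $\rho_J$ is the right reference — one obtains
$$0=\Richat_{\rho_J}(J,\Jhat)+\tfrac12 d\Bigl(d\bigl(\tfrac{\Thetahat_\om(J,\Jhat)}{\Theta_J}\bigr)\circ J\Bigr).$$
By \eqref{eq:LAMBDA1}, $d\Lambda_{\rho_J}(J,\Jhat)=2\Richat_{\rho_J}(J,\Jhat)$, so the $1$-form $\Lambda_{\rho_J}(J,\Jhat)+d\bigl(\Thetahat_\om(J,\Jhat)/\Theta_J\bigr)\circ J$ is closed. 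Since $M$ is simply connected in the Fano case (as $\sJ_\INT(M,\om)\ne\emptyset$), this form is exact, say equal to $dg$ with $g\in\Om^0(M)$, and therefore
$$\Lambda_{\rho_J}(J,\Jhat)=-d\bigl(\tfrac{\Thetahat_\om(J,\Jhat)}{\Theta_J}\bigr)\circ J+dg.$$

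To conclude I would match this against the normalized decomposition \eqref{eq:LAMBDAfgFANO}: one has $\int_M\bigl(\Thetahat_\om(J,\Jhat)/\Theta_J\bigr)\rho_J=\int_M\Thetahat_\om(J,\Jhat)\,\rho=\partial_t|_{t=0}\int_M\Theta_{J_t}\,\rho=\partial_t|_{t=0}\int_M\rho_{J_t}=0$ by \eqref{eq:rhoJFANO}, and after subtracting a constant from $g$ one also has $\int_Mg\,\rho_J=0$. Hence $\Thetahat_\om(J,\Jhat)/\Theta_J$ and $g$ satisfy exactly the conditions characterizing the functions $f$ and $g$ of \eqref{eq:LAMBDAfgFANO}, and by the uniqueness stated there $f=\Thetahat_\om(J,\Jhat)/\Theta_J$, which is precisely \eqref{eq:THETAHAT}.

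The only delicate point is the first differentiation together with the bookkeeping that makes the cross term disappear; everything else is a direct assembly of \eqref{eq:RICf}, \eqref{eq:LAMBDA1}, and the uniqueness in \eqref{eq:LAMBDAfgFANO}, so I do not anticipate a real obstacle.
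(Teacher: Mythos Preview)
Your argument is correct and follows essentially the same route as the paper: both differentiate the identity $\Ric_{\rho_{J_t},J_t}=\om$ at $t=0$ with $\rho_J$ as reference volume form to obtain $0=\Richat_{\rho_J}(J,\Jhat)+\tfrac12 d\bigl(d(\Thetahat_\om(J,\Jhat)/\Theta_J)\circ J\bigr)$, and then identify $\Thetahat_\om(J,\Jhat)/\Theta_J$ with $f$ via the decomposition~\eqref{eq:LAMBDAfgFANO}. The only cosmetic difference is in the final step: the paper substitutes $\Richat_{\rho_J}(J,\Jhat)=-\tfrac12 d(df\circ J)$ directly to get $d\bigl(d(\Thetahat_\om(J,\Jhat)/\Theta_J - f)\circ J\bigr)=0$ and concludes from compactness and the mean-zero condition, whereas you pass through the $1$-form level and invoke simple connectivity plus the uniqueness in~\eqref{eq:LAMBDAfgFANO}---but these are equivalent, since that uniqueness rests on the same pluriharmonic-implies-constant fact.
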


\begin{proof}
By Proposition~\ref{prop:LAMBDA} and~\eqref{eq:RICf}
the derivative of any path~${t\mapsto\Ric_{\rho_t,J_t}}$ is given 
by~${\p_t\Ric_{\rho_t,J_t}=\Richat_{\rho_t}(J_t,\p_tJ_t)
+\tfrac{1}{2}d(d(\p_t\rho_t/\rho_t)\circ J_t)}$.
In the case at hand with~${J_t\in\sJ_\INT(M,\om)}$ 
and~${\rho_t=\rho_{J_t}=\Theta_{J_t}\om^\sn/\sn!}$ 
this yields the equation
$$
0 = \Richat_{\rho_J}(J,\Jhat)
+ \tfrac{1}{2}d\bigl(d\bigl(\Thetahat_\om(J,\Jhat)/\Theta_J\bigr)\circ J\bigr)
= \tfrac{1}{2}d\bigl(d\bigl(\Thetahat_\om(J,\Jhat)/\Theta_J-f\bigr)\circ J\bigr).
$$
Since~$\Thetahat_\om(J,\Jhat)/\Theta_J-f$ has mean value zero 
for~$\rho_J$, this proves the lemma.
\end{proof}

\begin{lemma}[{\bf Holomorphic vector fields}]\label{le:LB}
Let~${J\in\sJ_\INT(M,\om)}$, choose the 
quadruple~${\rho_J,\Theta_J,\bL,\bB}$ as in Definition~\ref{def:THETAJ},
and choose functions~${F,G\in\Om^0(M)}$ 
such that~${\int_MF\rho_J=\int_MG\rho_J=0}$.
Then the following holds.

\smallskip\noindent{\bf (i)}
${F=G=0}$ if and only if~${\bL F + \bB G = 0}$
and~${\bL G - \bB F = 0}$.

\smallskip\noindent{\bf (ii)}
${\Lambda_{\rho_J}(\\J,\cL_{v_F+Jv_G}J)
= -d(2G-\bL G + \bB F)\circ J + d(2F-\bL F-\bB G)}$.

\smallskip\noindent{\bf (iii)}
${\cL_{v_F+Jv_G}J=0}$ if and only if~${\bL F + \bB G = 2F}$
and~${\bL G - \bB F = 2G}$.
\end{lemma}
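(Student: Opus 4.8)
The plan is to establish part~(ii) by a direct computation and then to read off parts~(iii) and~(i) from it, using one sign‑definite $L^2$‑estimate on the compact Kähler manifold $(M,\om,J)$.

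For part~(ii) I would set $W:=v_F+Jv_G$, use linearity of $\Lambda_{\rho_J}$ in its second argument to write $\Lambda_{\rho_J}(J,\cL_WJ)=\Lambda_{\rho_J}(J,\cL_{v_F}J)+\Lambda_{\rho_J}(J,\cL_{Jv_G}J)$, and apply equation~\eqref{eq:LAMBDA3} to each summand with $\rho=\rho_J$, using $\Ric_{\rho_J,J}=\om$ from~\eqref{eq:rhoJFANO}. This reduces everything to the $\rho_J$‑divergences of $v_F$, $Jv_F$, $Jv_G$ and $-v_G$. Since $\rho_J=\Theta_J\rho$, divergences transform by $f^{\rho_J}_v=f^\rho_v+d\Theta_J(v)/\Theta_J$; combining $f^\rho_{v_F}=0$ and $f^\rho_{Jv_F}=-d^*dF$ (and the analogues for $G$) with $d\Theta_J(v_F)=\{\Theta_J,F\}$ and $d\Theta_J(Jv_F)=\inner{v_{\Theta_J}}{v_F}$, the definitions of $\bL$ and $\bB$ give $f^{\rho_J}_{v_F}=\bB F$, $f^{\rho_J}_{Jv_F}=-\bL F$, $f^{\rho_J}_{v_G}=\bB G$, $f^{\rho_J}_{Jv_G}=-\bL G$. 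Inserting these, together with $\iota(v_F)\om=dF$ and $\iota(Jv_G)\om=-dG\circ J$, yields exactly the formula in~(ii). The error‑prone point here is keeping the twisted volume form $\rho_J$ straight and tracking the coefficient $2$ coming from $2\iota(v)\Ric_{\rho_J,J}=2\iota(v)\om$.

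Next I would record an $L^2$‑identity. Since $v_F$ is symplectic and $J$ is integrable, $\cL_WJ$ is symmetric and $\cL_{JW}J=J\cL_WJ$, so pointwise $\trace\bigl((\cL_WJ)J(\cL_{JW}J)\bigr)=-\trace\bigl((\cL_WJ)^2\bigr)=-\abs{\cL_WJ}^2$; then~\eqref{eq:LAMBDA2} (applied with $\rho=\rho_J$, $\Jhat=\cL_WJ$, $v=JW$, and the elementary fact $\beta\wedge\iota(v)\rho_J=\beta(v)\,\rho_J$) gives
\[
\Norm{\cL_WJ}_{\rho_J}^2=-2\int_M\bigl(\Lambda_{\rho_J}(J,\cL_WJ)\bigr)(JW)\,\rho_J .
\]
Part~(iii) then follows quickly. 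If $\bL F+\bB G=2F$ and $\bL G-\bB F=2G$, part~(ii) shows $\Lambda_{\rho_J}(J,\cL_WJ)=0$, so the identity above forces $\cL_WJ=0$. Conversely, $\cL_WJ=0$ makes the $1$‑form $-d\phi\circ J+d\psi$ vanish, where $\phi:=2G-\bL G+\bB F$ and $\psi:=2F-\bL F-\bB G$; applying $d$ gives $d(d\phi\circ J)=0$, hence $\phi$ is constant by the standard integration by parts on $(M,\om,J)$, and since $\phi$ has mean value zero for $\rho_J$ (because $\bL 1=\bB 1=0$) we get $\phi=0$ and then $d\psi=0$, so $\psi=0$; these are exactly the two equations.

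For part~(i) the forward implication is trivial, so I would assume $\bL F+\bB G=0$ and $\bL G-\bB F=0$. Then part~(ii) collapses to $\Lambda_{\rho_J}(J,\cL_WJ)=d(2F)-d(2G)\circ J=2\iota(W)\om$, using $\iota(v_F)\om+\iota(Jv_G)\om=\iota(W)\om$. Substituting this into the $L^2$‑identity and using $\bigl(\iota(W)\om\bigr)(JW)=\om(W,JW)=\inner{W}{W}$ gives $\Norm{\cL_WJ}_{\rho_J}^2=-4\Norm{W}_{\rho_J}^2$; since the left side is $\ge0$ and the right side $\le0$, both vanish, so $W=v_F+Jv_G=0$. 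Hence $dF=dG\circ J$, so $d(dG\circ J)=0$, which on the compact Kähler manifold forces $dG=0$; thus $G$ and then $F$ are constant, hence $F=G=0$ by the normalization. I expect the main obstacle to be the passage from the two $\bL,\bB$‑equations to the sign‑definite $L^2$‑identity above — in particular the observation that under the hypotheses of~(i) the right‑hand side of~(ii) is precisely $2\iota(W)\om$, and the check that~\eqref{eq:LAMBDA2} and the pointwise trace identity are legitimately applied with the twisted volume form $\rho_J$ rather than $\om^\sn/\sn!$ — since once that is in place, part~(i) and the nontrivial direction of~(iii) close simultaneously, and everything else is bookkeeping around~\eqref{eq:LAMBDA3} together with a routine $d(d\,\cdot\,\circ J)=0$ argument.
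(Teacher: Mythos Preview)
Your proposal is correct and follows essentially the same route as the paper: part~(ii) via~\eqref{eq:LAMBDA3} with the $\rho_J$-divergence formulas~\eqref{eq:THETAJFG}, and part~(iii) via the $L^2$-identity~$\tfrac{1}{2}\Norm{\cL_WJ}^2_{\rho_J}=-\int_M\Lambda_{\rho_J}(J,\cL_WJ)\wedge\iota(JW)\rho_J$ (which is~\eqref{eq:LAMBDA2}), exactly as the paper does.

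The one genuine difference is in part~(i). The paper first establishes the bilinear identities~\eqref{eq:LBFG} and from them the formula $\Norm{v_F+Jv_G}^2_{\rho_J}=\int_M\bigl(F(\bL F+\bB G)+G(\bL G-\bB F)\bigr)\rho_J$, which kills $W$ immediately under the hypotheses of~(i). You instead feed the hypotheses of~(i) back into part~(ii), observe that $\Lambda_{\rho_J}(J,\cL_WJ)=2\iota(W)\om$, and then the same $L^2$-identity gives $\Norm{\cL_WJ}^2_{\rho_J}=-4\Norm{W}^2_{\rho_J}$, forcing $W=0$. Your route is slightly more roundabout but has the virtue of using a single tool (the $\Norm{\cL_WJ}^2$ identity) for both~(i) and~(iii), whereas the paper's route is shorter for~(i) but needs the separate integration-by-parts identity~\eqref{eq:vFG}. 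Either way the endgame ($v_F+Jv_G=0\Rightarrow dF=dG\circ J\Rightarrow d^*dG=0\Rightarrow F=G=0$) is the same.
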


\begin{proof}
Throughout the proof we use the notation
$$
\bigl\|F\bigr\| := \sqrt{\int_MF^2\rho_J},\qquad
\bigl\|v\bigr\| := \sqrt{\int_M\om(v,Jv)\rho_J},\qquad
\bigl\|\Jhat\bigr\| := \sqrt{\int_M\trace(\Jhat^2)\rho_J}
$$
for~${F\in\Om^0(M)}$, ${v\in\Vect(M)}$, and~${\Jhat\in\Om^{0,1}_J(M,TM)}$
with~${\Jhat=\Jhat^*}$.

To prove part~(i), we observe that
\begin{equation}\label{eq:LBFG}
\begin{split}
\int_M(\bL F)G\rho_J
&= 
\int_M(dF\circ J)\wedge dG\wedge\Theta_J\frac{\om^{\sn-1}}{(\sn-1)!}
=
\int_M\om(v_F,Jv_G)\rho_J, 
\\
\int_M(\bB F)G\rho_J
&= 
\int_M dF\wedge dG\wedge\Theta_J\frac{\om^{\sn-1}}{(\sn-1)!}
=
\int_M\om(v_F,v_G)\rho_J
\end{split}
\end{equation}
for all~${F,G\in\Om^0(M)}$, and hence
\begin{equation}\label{eq:vFG}
\Norm{v_F+Jv_G}^2 
= \int_M\Bigl(F(\bL F + \bB G) + G(\bL G - \bB F)\Bigr)\rho_J.
\end{equation}
Now let~${F,G\in\Om^0(M)}$ 
such that~${\bL F + \bB G=\bL G - \bB F=0}$.
Then~${v_F+Jv_G=0}$ by~\eqref{eq:vFG} and
hence~${\int_M\om(v_F,Jv_F)\om^\sn/\sn!=\int_M\om(v_F,v_G)\om^\sn/\sn!=0}$.
Thus~$F$ and~$G$ are constant and this proves~(i).
Part~(ii) follows from~\eqref{eq:LAMBDA3}, \eqref{eq:rhoJFANO}, 
and 
\begin{equation}\label{eq:THETAJFG}
d\iota(v_F)\rho_J = (\bB F)\rho_J,\qquad
d\iota(Jv_G)\rho_J =  - (\bL G)\rho_J.
\end{equation}
Moreover, 
$
\tfrac{1}{2}\Norm{\cL_{v_F+Jv_G}J}^2
= -\int_M\Lambda_{\rho_J}(J,\cL_{v_F+Jv_G}J)\wedge\iota(J(v_F+Jv_G))\rho_J
$
by~\eqref{eq:LAMBDA2}, and hence~(iii) follows from~(ii).
\end{proof}

\begin{lemma}[{\bf Decomposition Lemma}]\label{le:DEC}
Let~${J\in\sJ_\INT(M,\om)}$,
let~$\rho_J$ be as in~\eqref{eq:rhoJFANO},
and let~${\Jhat\in\Om^{0,1}_J(M,TM)}$ 
such that~${\bar\p_J\Jhat=0}$ and~${\Jhat=\Jhat^*}$
with respect to the metric~${\om(\cdot,J\cdot)}$.
Then there exist~${F,G\in\Om^0(M)}$
and~${A\in\Om^{0,1}_J(M,TM)}$ such that
\begin{equation}\label{eq:DEC1}
\Jhat = \cL_{v_F}J + \cL_{Jv_G}J + A,
\end{equation}
and
\begin{equation}\label{eq:DEC2}
\begin{split}
\int_MF\rho_J=\int_MG\rho_J=0,\qquad
A=A^*, \qquad \bar\p_JA=0,\qquad 
\Lambda_{\rho_J}(J,A)=0.
\end{split}
\end{equation}
Moreover, $A$ and~${\cL_{v_F+Jv_G}J}$ are $L^2$ orthogonal
If~$\Jhat$ satisfies~\eqref{eq:DEC1} and~\eqref{eq:DEC2}, 
then~$\Lambda_{\rho_J}(J,\Jhat)$ is given by~\eqref{eq:LAMBDAfgFANO}
with 
\begin{equation}\label{eq:fgFG}
f=2G-\bL G+\bB F,\qquad
g=2F-\bL F-\bB G,
\end{equation}
and~$\Jhat$ satisfies the equation
\begin{equation}\label{eq:FGA}
\begin{split}
&\int_M\Bigl(\tfrac{1}{2}\trace\bigl(\Jhat^2\bigr) - f^2 - g^2\Bigr)\rho_J  \\
&= 
\int_M\tfrac{1}{2}\trace\bigl(A^2\bigr)\rho_J
+
2\int_M\Bigl(
\Abs{v_F+Jv_G}^2 - 2\bigl(F^2+G^2\bigr)
\Bigr)\rho_J.
\end{split}
\end{equation}
\end{lemma}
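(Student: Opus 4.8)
The plan is to reduce the statement to the Hodge-type decomposition for Calabi--Yau structures proved in Lemma~\ref{le:DECOMP}, transported to the Fano setting, together with the algebraic identities of Lemma~\ref{le:LB}. The existence of the decomposition~\eqref{eq:DEC1}--\eqref{eq:DEC2} should be obtained exactly as in the proof of Lemma~\ref{le:DECOMP}: given $\Jhat$ with $\bar\p_J\Jhat=0$ and $\Jhat=\Jhat^*$, choose $Y\in\Vect(M)$ with $\iota(Y)\om=\tfrac{\hbar}{2}\Lambda_{\rho_J}(J,\Jhat)$ — here with $\hbar=1$, since the Fano normalization~\eqref{eq:rhoJFANO} is $\Ric_{\rho_J,J}=\om$ — split $Y=v_\Phi+Jv_\Psi$ by Hodge theory for $\rho_J$ (there is no divergence-free part because $M$ is simply connected), then solve the two equations $\bL F+\bB G=2F+(\text{shift})$, $\bL G-\bB F=2G+(\text{shift})$ to produce $F,G$, using part~(i) of Lemma~\ref{le:LB} to see that the relevant operator $(F,G)\mapsto(\bL F+\bB G,\,\bL G-\bB F)$ has trivial kernel on mean-zero pairs and hence, being Fredholm of index zero and self-adjoint in the appropriate sense, is surjective onto mean-zero pairs. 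Set $A:=\Jhat-\cL_{v_F+Jv_G}J$; that $A=A^*$, $\bar\p_JA=0$, and $\Lambda_{\rho_J}(J,A)=0$ then follows just as in Step~5 of Lemma~\ref{le:DECOMP}, using \cite[Lemma~3.6]{GPST} to compare the symmetric parts of $\Jhat$ and $\cL_YJ$, and Remark~\ref{rmk:LAMBDA} (i.e.\ $\Lambda_{\rho_J}(J,A)=\iota(2J\bar\p_J^*A^*)\om$) together with $\bar\p_J^*A=0$.

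Next I would record the formula~\eqref{eq:fgFG} for $f,g$: applying part~(ii) of Lemma~\ref{le:LB} to $\cL_{v_F+Jv_G}J$ and using $\Lambda_{\rho_J}(J,A)=0$ gives
$$
\Lambda_{\rho_J}(J,\Jhat)=\Lambda_{\rho_J}(J,\cL_{v_F+Jv_G}J)
=-d\bigl(2G-\bL G+\bB F\bigr)\circ J+d\bigl(2F-\bL F-\bB G\bigr),
$$
and since $F,G$ (hence $2G-\bL G+\bB F$ and $2F-\bL F-\bB G$) have mean value zero for $\rho_J$ by construction, the uniqueness in~\eqref{eq:LAMBDAfgFANO} identifies these with $f$ and $g$ respectively. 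For the $L^2$-orthogonality of $A$ and $\cL_{v_F+Jv_G}J$: expand $\tfrac12\int_M\trace(\Jhat JA)\rho_J$ — wait, more precisely one shows $\int_M\trace\bigl((\cL_{v_F+Jv_G}J)JA\bigr)\rho_J=0$ by integrating by parts, using that $\cL_XJ=2J\bar\p_JX$ (Remark~\ref{rmk:LAMBDA}) so the pairing is $\bar\p_J^*(\text{something})$ applied to $A$, which vanishes since $\bar\p_J^*A=0$; equivalently, the orthogonality already appears in this form in Lemma~\ref{le:DECOMP}, Step~5.

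Finally, identity~\eqref{eq:FGA} is pure bookkeeping once the orthogonal splitting $\Jhat=\cL_{v_F+Jv_G}J+A$ is in hand. Expanding the quadratic form and using orthogonality,
$$
\tfrac12\int_M\trace(\Jhat^2)\rho_J=\tfrac12\int_M\trace(A^2)\rho_J+\tfrac12\int_M\trace\bigl((\cL_{v_F+Jv_G}J)^2\bigr)\rho_J,
$$
and by~\eqref{eq:LAMBDA2} applied with $v=J(v_F+Jv_G)$ together with part~(ii) of Lemma~\ref{le:LB},
$$
\tfrac12\int_M\trace\bigl((\cL_{v_F+Jv_G}J)^2\bigr)\rho_J
=-\int_M\Lambda_{\rho_J}(J,\cL_{v_F+Jv_G}J)\wedge\iota\bigl(J(v_F+Jv_G)\bigr)\rho_J.
$$
Writing $\iota(J(v_F+Jv_G))\rho_J$ out via $\iota(Jv_F)\rho_J$ and $\iota(v_G)\rho_J$ and pairing against $-d f\circ J+d g$ as above, one uses~\eqref{eq:LBFG}, i.e.\ $\int_M(\bL F)G\rho_J=\int_M\om(v_F,Jv_G)\rho_J$ and $\int_M(\bB F)G\rho_J=\int_M\om(v_F,v_G)\rho_J$, to rewrite everything in terms of $\bL,\bB$ and then collect terms; the combination that survives is precisely $2\int_M\bigl(\Abs{v_F+Jv_G}^2-2(F^2+G^2)\bigr)\rho_J$ after substituting $f,g$ from~\eqref{eq:fgFG} and using $\int_M\Abs{v_F+Jv_G}^2\rho_J=\int_M\bigl(F(\bL F+\bB G)+G(\bL G-\bB F)\bigr)\rho_J$ from~\eqref{eq:vFG}. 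I expect the main obstacle to be the surjectivity step for the operator $(F,G)\mapsto(\bL F+\bB G,\bL G-\bB F)$ onto mean-zero pairs — this needs the Fredholm property, the self-adjointness of the combined operator on $L^2(\rho_J)\oplus L^2(\rho_J)$ (which follows since $\bL$ is self-adjoint and $\bB$ skew-adjoint, so the off-diagonal blocks are adjoint to each other), index zero, and the triviality of the kernel from Lemma~\ref{le:LB}(i) — rather than the essentially mechanical verification of~\eqref{eq:FGA}.
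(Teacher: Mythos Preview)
Your surjectivity step has a genuine gap. The operator you need to invert to produce $F,G$ is
\[
(F,G)\longmapsto\bigl(2F-\bL F-\bB G,\;2G-\bL G+\bB F\bigr),
\]
since by Lemma~\ref{le:LB}(ii) you want $\Lambda_{\rho_J}(J,\cL_{v_F+Jv_G}J)=\Lambda_{\rho_J}(J,\Jhat)$, i.e.\ $2F-\bL F-\bB G=g$ and $2G-\bL G+\bB F=f$. You instead invoke Lemma~\ref{le:LB}(i) for the operator $(F,G)\mapsto(\bL F+\bB G,\bL G-\bB F)$, which is the wrong one. The correct operator is self-adjoint Fredholm, but its kernel is \emph{not} trivial: by Lemma~\ref{le:LB}(iii) it consists precisely of those $(F,G)$ with $\cL_{v_F+Jv_G}J=0$, and Fano manifolds typically carry nontrivial holomorphic vector fields. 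So you cannot conclude surjectivity from injectivity. What the paper does is first establish the pairing identity
\[
\tfrac{1}{2}\int_M\trace(\Jhat\,\cL_{v_F+Jv_G}J)\rho_J
= -\int_M\bigl(g(\bL F+\bB G)+f(\bL G-\bB F)\bigr)\rho_J
\]
from~\eqref{eq:LAMBDA2}, and then observe that for $(F,G)$ in the kernel the left side vanishes while the right becomes $-2\int_M(gF+fG)\rho_J$; hence $(g,f)$ is orthogonal to the kernel and therefore lies in the image. Your outline omits this step entirely, and without it the existence of $F,G$ is not established.

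There is a second, smaller issue. You obtain $\Lambda_{\rho_J}(J,A)=0$ by appealing to Remark~\ref{rmk:LAMBDA} together with $\bar\p_J^*A=0$. But Remark~\ref{rmk:LAMBDA} gives $\Lambda_\rho(J,\Jhat)=\iota(2J\bar\p_J^*\Jhat^*)\om$ only when $\rho=\om^\sn/\sn!$, whereas here $\rho_J=\Theta_J\,\om^\sn/\sn!$ with $\Theta_J$ nonconstant in general; the formula does not transfer. The paper avoids this by building $\Lambda_{\rho_J}(J,A)=0$ into the construction: once $F,G$ solve the system above, linearity of $\Lambda_{\rho_J}(J,\cdot)$ gives $\Lambda_{\rho_J}(J,A)=\Lambda_{\rho_J}(J,\Jhat)-\Lambda_{\rho_J}(J,\cL_{v_F+Jv_G}J)=0$ immediately. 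The $L^2$-orthogonality of $A$ and $\cL_{v_F+Jv_G}J$ then follows from~\eqref{eq:LAMBDA2} (applied with $v$ replaced by $J(v_F+Jv_G)$ and using integrability), and your derivation of~\eqref{eq:FGA} from that point on is essentially the paper's computation.
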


\begin{proof}
Let~${f,g\in\Om^0(M)}$ be as in~\eqref{eq:LAMBDAfgFANO}
We prove first that all~${F,G\in\Om^0(M)}$ satisfies the identity
\begin{equation}\label{eq:JHATFG}
\tfrac{1}{2}\int_M\trace(\Jhat\cL_{v_F+Jv_G}J)\rho_J 
= 
-\int_M \Bigl(g(\bL F + \bB G) + f(\bL G-\bB F)\Bigr)\rho_J.
\end{equation}
To see this, abbreviate~${v:=v_F+Jv_G}$.  
Then we have~${d\iota(v)\rho_J = (-\bL G + \bB F)\rho_J}$
and~${d\iota(Jv)\rho_J = (-\bL F - \bB G)\rho_J}$ 
by~\eqref{eq:THETAJFG}. Hence, by~\eqref{eq:LAMBDA2} 
and~\eqref{eq:LAMBDAfgFANO},
\begin{equation*}
\begin{split}
\tfrac{1}{2}\int_M\trace\bigl(\Jhat\cL_vJ\bigr)\rho_J 
&= 
\int_M\Bigl(df\circ J-dg\Bigr)\wedge\iota(Jv)\rho_J \\
&=
\int_M\Bigl(fd\iota(v)\rho_J + gd\iota(Jv)\rho_J\Bigr) \\
&=
\int_M\Bigl(f(- \bL G + \bB F) + g(- \bL F - \bB G)\Bigr)\rho_J.
\end{split}
\end{equation*}
This proves~\eqref{eq:JHATFG}.

Now choose functions~${F,G\in\Om^0(M)}$ such that
$$
\bL F+\bB G=2F,\qquad
\bL G-\bB F=2G.  
$$
Then~${\cL_{v_F+Jv_G}J=0}$
by part~(iii) of Lemma~\ref{le:LB} and hence~${\int_M(gF+fG)\rho_J=0}$
by equation~\eqref{eq:JHATFG}.  Thus the pair~$(g,f)$ 
is $L^2$ orthogonal to the kernel of the self-adjoint 
Fredholm operator
$
(F,G)\mapsto(2F-\bL F-\bB G,2G-\bL G+\bB F)
$ 
and so belongs to its image.  Hence there exist
smooth functions~${F,G\in\Om^0(M)}$ 
such that
$$
2F - \bL F - \bB G = g,\qquad
2G - \bL G + \bB F = f.
$$
By part~(ii) of Lemma~\ref{le:LB} this implies
$
\Lambda_{\rho_J}(J,\Jhat-\cL_{v_F+Jv_G}J)=0.
$
Since~$\cL_{v_F}J$ and~${\cL_{Jv_G}J=J\cL_{v_G}J}$
are symmetric, by~\cite[Lemma~3.7]{GPST}, this proves~\eqref{eq:DEC2}.  

Now assume that~${F,G,A}$ have been found such that~$\Jhat$
satisfies~\eqref{eq:DEC1} and~\eqref{eq:DEC2}. 
Then~\eqref{eq:fgFG} follows directly from part~(ii) 
of Lemma~\ref{le:LB}. Moreover, 
$$
\int_M\trace(A\cL_{v_F+Jv_G}J)\rho_J=0
$$
by~\eqref{eq:LAMBDA2} and~\eqref{eq:DEC2}.  
Hence, by~\eqref{eq:JHATFG} we have
\begin{equation*}
\begin{split}
&\tfrac{1}{2}\int_M\trace\bigl(\Jhat^2\bigr)\rho_J 
- \tfrac{1}{2}\int_M\trace\bigl(A^2\bigr)\rho_J  \\
&= 
\tfrac{1}{2}\int_M\trace\bigl(\Jhat(\cL_{v_F+Jv_G}J)\bigr)\rho_J \\
&=
\int_M\Bigl(f(- \bL G + \bB F) + g(- \bL F - \bB G)\Bigr)\rho_J \\
&=
\int_M\Bigl(f(f-2G) + g(g-2F)\Bigr)\rho_J \\
&=
\int_M\Bigl(f^2+g^2\Bigr)\rho_J 
+ 2\int_M\Bigl(F(\bL F+\bB G-2F) + G(\bL G-\bB F -2G)\Bigr)\rho_J \\
&=
\int_M\left(f^2+g^2\right)\rho_J
+ 2\int_M\Bigl(
\Abs{v_F+Jv_G}^2-2\bigl(F^2+G^2\bigr)
\Bigr)\rho_J.
\end{split}
\end{equation*}
Here the last step uses~\eqref{eq:vFG}.
This proves~\eqref{eq:FGA}.
\end{proof}

\begin{lemma}[{\bf Berndtsson Inequality}]\label{le:BERNDTSSON}
Let~${\sJ\in\sJ_\INT(M,\om)}$, 
choose~$\rho_J$ as in~\eqref{eq:rhoJFANO}, 
and let~${F,G\in\Om^0(M)}$ 
such that~${\int_MF\rho_J=\int_MG\rho_J=0}$. Then
\begin{equation}\label{eq:BERNDTSSON}
\int_M\Abs{v_F+Jv_G}^2\rho_J
\ge 2\int_M\bigl(F^2+G^2\bigr)\rho_J,
\end{equation}
and equality holds in~\eqref{eq:BERNDTSSON} if and only if~${\cL_{v_F+Jv_G}J=0}$.
If~${\cL_{v_F+Jv_G}J=0}$, then every pair of functions~${\Fhat,\Ghat\in\Om^0(M)}$ 
with~${\int_M\Fhat\rho_J=\int_M\Ghat\rho_J=0}$ satisfies
\begin{equation}\label{eq:FGFG}
\int_M\INNER{v_F+Jv_G}{v_\Fhat+Jv_\Ghat}
= 2\int_M\Bigl(F\Fhat+G\Ghat\Bigr)\rho_J.
\end{equation}
\end{lemma}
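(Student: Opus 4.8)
The plan is to recognise the inequality~\eqref{eq:BERNDTSSON} as a reformulation of Berndtsson's convexity theorem for the Ding functional along weak geodesics in the space of K\"ahler potentials~\cite{BB1,BB2,BB} (see also Donaldson~\cite{DON5}), and then to read off the equality case and the polarisation~\eqref{eq:FGFG} from it. First observe, using~\eqref{eq:vFG}, that~\eqref{eq:BERNDTSSON} is equivalent to the assertion that $\int_M\bigl(F(\bL F+\bB G-2F)+G(\bL G-\bB F-2G)\bigr)\rho_J\ge0$ for all $F,G\in\Om^0(M)$ with $\int_MF\rho_J=\int_MG\rho_J=0$. Equivalently, applying equation~\eqref{eq:FGA} of Lemma~\ref{le:DEC} to the symmetric, $\bar\p_J$-closed deformation $\Jhat:=\cL_{v_F}J+\cL_{Jv_G}J$ --- for which the trivial splitting $\Jhat=\cL_{v_F}J+\cL_{Jv_G}J+A$ with $A=0$ satisfies~\eqref{eq:DEC1}--\eqref{eq:DEC2} --- the inequality becomes $\tfrac12\int_M\trace(\Jhat^2)\rho_J\ge\int_M(f^2+g^2)\rho_J$, i.e.\ the nonnegativity of the quadratic form attached to the Donaldson $2$-form~\eqref{eq:DONSYMP} in the complexified-orbit directions.

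Next I would pass to the picture of Remark~\ref{rmk:KPOT}. Given $F,G$ with $\int_MF\rho_J=\int_MG\rho_J=0$, the complexified action yields a path $\{J_t\}\subset\sJ_\INT(M,\om)$, exact isotopic to $J$, with $\p_tJ_t=\cL_{v_{F_t}+Jv_{G_t}}J_t$ and $(F_0,G_0)=(F,G)$, together with a path $\{h_t\}\subset\sH_J$ of K\"ahler potentials corresponding to it; one takes $h_t$ to be the weak $C^{1,1}$ geodesic through $h_0=0$ with the appropriate initial velocity, the Hamiltonian component (the genuine $\Ham(M,\om)$-direction) being accounted for by a reparametrisation, the precise bookkeeping of $F$ and $G$ being as in~\cite{DON5}. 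Existence of the geodesic is Chen's theorem~\cite{C}. By Berndtsson's theorem the function $t\mapsto\cD_J(h_t)$, where $\cD_J$ denotes the Ding functional, is convex, and it is affine only if $h_t$ is generated by a holomorphic vector field.

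The core step, and the one I expect to be the main obstacle, is the second-variation computation: differentiating $t\mapsto\cD_J(h_t)$ twice at $t=0$ --- using the defining property~\eqref{eq:rhoJFANO} of $\rho_J$, the variation formula~\eqref{eq:RICf}, equation~\eqref{eq:LAMBDA3}, and the geodesic equation~\eqref{eq:KPOTgeo}, which supplies precisely the $\p_t\p_th_t$ term and renders the resulting Hessian independent of the background potential --- one expresses $\tfrac{d^2}{dt^2}\big|_{t=0}\cD_J(h_t)$ through the operators $\bL,\bB$ of Definition~\ref{def:THETAJ} and identifies it, up to a positive constant, with $\int_M\bigl(\Abs{v_F+Jv_G}^2-2(F^2+G^2)\bigr)\rho_J$. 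Carrying this identity out at a point that need not be critical, and coping with the mere $C^{1,1}$ regularity of the geodesic (as in~\cite{BB}), is the delicate part. Together with Berndtsson's convexity this gives~\eqref{eq:BERNDTSSON}, and equality forces $\cD_J(h_t)$ to be affine, hence $h_t$ to be generated by a holomorphic vector field, that is $\cL_{v_F+Jv_G}J=0$. Conversely, if $\cL_{v_F+Jv_G}J=0$ then $\bL F+\bB G=2F$ and $\bL G-\bB F=2G$ by Lemma~\ref{le:LB}(iii), and~\eqref{eq:vFG} immediately gives equality in~\eqref{eq:BERNDTSSON}.

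Finally, the polarisation~\eqref{eq:FGFG} needs only Lemma~\ref{le:LB} and no convexity input: assuming $\cL_{v_F+Jv_G}J=0$, so that $\bL F+\bB G=2F$ and $\bL G-\bB F=2G$, one polarises the quadratic identity~\eqref{eq:vFG}, uses the self-adjointness of $\bL$ and the skew-adjointness of $\bB$ with respect to $\inner{\cdot}{\cdot}_J$, and substitutes the two equations just displayed to arrive at~\eqref{eq:FGFG} for every pair $\Fhat,\Ghat$ with $\int_M\Fhat\rho_J=\int_M\Ghat\rho_J=0$.
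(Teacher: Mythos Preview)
Your route and the paper's are genuinely different, and yours has a gap.

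The paper never invokes Berndtsson's theorem; in fact it goes the other way, deriving Theorem~\ref{thm:BERNDTSSON} from Lemma~\ref{le:BERNDTSSON}. The proof of the lemma is a direct completion-of-squares. Using Lemma~\ref{le:LB}(i) one solves
\[
\bL\Phi+\bB\Psi=F,\qquad \bL\Psi-\bB\Phi=G
\]
for mean-zero~$\Phi,\Psi$, sets~${u:=v_\Phi+Jv_\Psi}$ and~${v:=v_F+Jv_G}$, applies~\eqref{eq:FGA} to~${\Jhat=\cL_uJ}$, and after a short calculation with~\eqref{eq:LBFG} obtains the exact identity
\[
\Norm{v}^2 - 2\int_M\bigl(F^2+G^2\bigr)\rho_J \;=\; \Norm{\cL_uJ}^2 + \Norm{v-2u}^2.
\]
Both the inequality and the characterisation of equality are then immediate.

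Your approach, by contrast, feeds the problem back into convexity of the Ding functional along a real geodesic in~$\sH_J$. The difficulty is that this only yields the one-variable case. Look at the paper's own proof of Theorem~\ref{thm:BERNDTSSON}: the second derivative of~$\cF_J$ along a geodesic is bounded below using Lemma~\ref{le:BERNDTSSON} with~${G=0}$ and~${F=\p_th-\int_M(\p_th)\theta_h\rho_h}$. So real geodesic convexity is equivalent to the special case~${G=0}$, i.e.\ to~${\bL\ge2}$ on mean-zero functions. The full inequality~\eqref{eq:BERNDTSSON} contains the cross-term~${2\inner{F}{\bB G}_J}$ coming from the skew operator~$\bB$, and this does not follow from~${\bL\ge2}$ alone. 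Your sentence about the Hamiltonian component being ``accounted for by a reparametrisation'' does not help: a Hamiltonian reparametrisation leaves~$\cF_J(h_t)$ identically unchanged, so its contribution to the Hessian vanishes and you recover no information about the~$F$-direction. To make your strategy work you would need Berndtsson's plurisubharmonicity over a complex disc (the actual content of~\cite{BB1}) rather than convexity along a real line, and then identify the complex Hessian at the origin with the full quadratic form~$Q(F,G)$; this is doable but is not what you wrote.

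Your treatment of the polarisation~\eqref{eq:FGFG} is correct and essentially matches the paper's: they differentiate~${t\mapsto\int_M\bigl(\tfrac12\abs{v_{F+t\Fhat}+Jv_{G+t\Ghat}}^2-(F+t\Fhat)^2-(G+t\Ghat)^2\bigr)\rho_J}$ at~${t=0}$, which is your polarisation of~\eqref{eq:vFG} combined with the eigenvalue relations from Lemma~\ref{le:LB}(iii).
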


\begin{proof} 
Since~$F$ and~$G$ have mean value zero, it follows from
part~(i) of Lemma~\ref{le:LB} that there exists a unique
pair of functions~${\Phi,\Psi\in\Om^0(M)}$ such that
\begin{equation}\label{eq:LBFGPP}
\bL\Phi+\bB\Psi = F,\qquad
\bL\Psi-\bB\Phi = G,\qquad
\int_M\Phi\rho_J = \int_M\Psi\rho_J =0.
\end{equation}
Continue the notation in the proof of Lemma~\ref{le:LB} and define
\begin{equation}\label{eq:uv}
u:=v_\Phi+Jv_\Psi,\qquad v:=v_F+Jv_G.
\end{equation}
Then Lemma~\ref{le:DEC} 
with~${\Jhat=\cL_uJ}$, ${f=2\Psi-G}$, ${g=2\Phi-F}$,~${A=0}$ yields
\begin{equation}\label{eq:uFG2}
\begin{split}
\tfrac{1}{2}\Norm{\cL_uJ}^2
&=
\int_M\Bigl(
2\Abs{u}^2 - 4\bigl(\Phi^2+\Psi^2\bigr)
+ \bigl(2\Phi-F\bigr)^2
+ \bigl(2\Psi-G\bigr)^2
\Bigr)\rho_J \\
&=
\int_M\Bigl(
2\Abs{u}^2 + F^2 + G^2
- 4\Phi F - 4\Psi G
\Bigr)\rho_J \\
&=
\int_M\Bigl(F^2 + G^2\Bigr)\rho_J
- 2\Norm{u}^2.
\end{split}
\end{equation}
The last step uses the formula~${\norm{u}^2 = \int_M(\Phi F+\Psi G)\rho_J}$ 
in~\eqref{eq:vFG}.  Now, for~${\lambda\in\R}$,
\begin{equation*}
\begin{split}
&\Norm{v}^2 - \Norm{v-\lambda u}^2
=
2\lambda\int_M\om(u,Jv)\rho_J - \lambda^2\Norm{u}^2 \\
&=
2\lambda\int_M\Bigl(
\om(v_\Phi,Jv_F) + \om(v_\Psi,v_F) + \om(v_\Psi,Jv_G) - \om(v_\Phi,v_G)
\Bigr)\rho_J 
- \lambda^2\Norm{u}^2\\
&=
2\lambda\int_M\Bigl((\bL\Phi+\bB\Psi)F 
+ (\bL\Psi-\bB\Phi)G\Bigr)\rho_J 
- \lambda^2\Norm{u}^2\\
&=
2\lambda\int_M\bigl(F^2+G^2\bigr)\rho_J  -  \lambda^2\Norm{u}^2 \\
&= 
\left(2\lambda-\frac{\lambda^2}{2}\right)\int_M\bigl(F^2+G^2\bigr)\rho_J
+ \frac{\lambda^2}{4}\Norm{\cL_uJ}^2.
\end{split}
\end{equation*}
Here the second equality follows from~\eqref{eq:uv}, the third from~\eqref{eq:LBFG}, 
the fourth from~\eqref{eq:LBFGPP}, and the last from~\eqref{eq:uFG2}.
With~${\lambda=2}$ this yields
\begin{equation}\label{eq:uvFG}
\Norm{v}^2-2\int_M\bigl(F^2+G^2\bigr)\rho_J
= \Norm{\cL_uJ}^2 + \Norm{v-2u}^2
\ge 0. 
\end{equation}
This proves~\eqref{eq:BERNDTSSON}.
Moreover, equality in~\eqref{eq:BERNDTSSON} implies~${v=2u}$ 
and~${\cL_uJ=0}$, and so~${\cL_vJ=0}$.
Conversely, if~${\cL_vJ=0}$, then~${\bL F+\bB G=2F}$ 
and~${\bL G-\bB F=2G}$ by part~(iii) of Lemma~\ref{le:LB},
hence the unique solution of~\eqref{eq:LBFGPP} is given by~${\Phi=\tfrac{1}{2}F}$
and~${\Psi=\tfrac{1}{2}G}$, which implies~${u=\tfrac{1}{2}v}$ and~${\cL_uJ=0}$,
so equality in~\eqref{eq:BERNDTSSON} follows from~\eqref{eq:uvFG}.
To prove the last assertion, 
define~${F_t:=F+t\Fhat}$ and~${G_t:=G+t\Ghat}$ and differentiate the 
function~${t\mapsto\int_M\bigl(\tfrac{1}{2}\abs{v_{F_t}+Jv_{G_t}}^2-F_t^2-G_t^2\bigr)\rho_J}$
at~${t=0}$. 
\end{proof}

\begin{proof}[Proof of Theorem~\ref{thm:DON}]
Fix an element~${J\in\sJ_\INT(M,\om)}$ 
and let~$\rho_J$ be as in~\eqref{eq:rhoJFANO}.
We show first that the $2$-form~\eqref{eq:DONSYMP} is nondegenerate
and compatible with the complex structure~${\Jhat\mapsto-J\Jhat}$.  
To see this, let~${\Jhat\in T_J\sJ_\INT(M,\om)}$,
let~${f,g}$ be as in~\eqref{eq:LAMBDAfgFANO}, 
and let~${F,G,A}$ be as in Lemma~\ref{le:DEC}. 
Then~\eqref{eq:DONSYMP} and~\eqref{eq:FGA} yield
\begin{equation}\label{eq:NONDEG1}
\begin{split}
\Om^D_J(\Jhat,-J\Jhat) 
&= 
\int_M\Bigl(\tfrac{1}{2}\trace\bigl(\Jhat^2\bigr) - f^2 - g^2\Bigr)\rho_J \\
&= 
\int_M\tfrac{1}{2}\trace\bigl(A^2\bigr)\rho_J
+
2\int_M\Bigl(
\Abs{v_F+Jv_G}^2 - 2\bigl(F^2+G^2\bigr)
\Bigr)\rho_J.
\end{split}
\end{equation}
By Lemma~\ref{le:BERNDTSSON} the right hand side in~\eqref{eq:NONDEG1} 
is nonnegative and vanishes if and only if~${A=0}$ 
and~${\cL_{v_F+Jv_G}J=0}$ or, equivalently,~${\Jhat=0}$. 
This proves nondegeneracy.

To prove~\eqref{eq:DONMOMENT2}, 
fix an element~${\Jhat\in T_J\sJ_\INT(M,\om)}$,
let~$f,g$ be as in~\eqref{eq:LAMBDAfgFANO},
and let~${H\in\Om^0(M)}$ such that~${\int_MH\rho_J=0}$.
Then, by Lemma~\ref{le:LB} and~\eqref{eq:LBFG}, we have
\begin{equation*}
\begin{split}
\Om^D_J(\Jhat,\cL_{v_H}J) 
&= 
\tfrac{1}{2}\int_M\trace\bigl(\Jhat J\cL_{v_H}J\bigr)\rho_J
- \int_M f\left(2H - \bL H\right)\rho_J 
+ \int_M g(\bB H)\rho_J  \\
&= 
\int_M\Lambda_{\rho_J}(J,\Jhat)\wedge\iota(v_H)\rho_J 
+ \int_M dH\wedge dg\wedge\Theta_J\frac{\om^{\sn-1}}{(\sn-1)!}  \\
&\quad
+ \int_M (df\circ J)\wedge dH\wedge\Theta_J\frac{\om^{\sn-1}}{(\sn-1)!}
- 2\int_MHf\rho_J \\
&=
-2\int_MHf\rho_J.
\end{split}
\end{equation*}
Here the last equality holds because~${\Lambda_{\rho_J}(J,\Jhat)=-df\circ J+dg}$.
This proves the first equality in~\eqref{eq:DONMOMENT2}
and the second follows from Lemma~\ref{le:THETAHAT}.

It remains to prove that the $2$-form~\eqref{eq:DONSYMP} is closed.
In Donaldson's formulation this follows directly from the definition,
while in our formulation this requires proof.  Here is the outline.
First, let~${\R^2\to\sJ_\INT(M,\om):(s,t)\mapsto J_{s,t}}$
be a smooth map and, for~${s,t\in\R}$, define the 
functions~${f_s,g_s,f_t,g_t\in\Om^0(M)}$ such that
they have mean value zero with respect to~${\rho_J}$
for~${J=J_{s,t}}$ and
$$
\Lambda_{\rho_J}(J,\p_sJ) = -df_s\circ J+dg_s,\qquad
\Lambda_{\rho_J}(J,\p_tJ) = -df_t\circ J+dg_t.
$$
Here we have dropped the subscripts~${s,t}$ for~$J$ and 
observe that~${f_s,g_s,f_t,g_t}$ depend also
on~$s$ and~$t$.   Then by~\cite[Theorem~2.7]{GPST}
and Lemma~\ref{le:THETAHAT} we have
\begin{equation*}
\begin{split}
\p_s\Lambda_{\rho_J}(J,\p_tJ)-\p_t\Lambda_{\rho_J}(J,\p_sJ)
+ \tfrac{1}{2}d\trace\bigl((\p_sJ)J(\p_tJ)\bigr) 
= 
df_s\circ \p_tJ - df_t\circ \p_sJ.
\end{split}
\end{equation*}
Hence a calculation shows that
\begin{equation}\label{eq:dsdtf} 
\begin{split}
\p_s f_t - \p_tf_s = 0,\qquad
d\Bigl(\p_s g_t - \p_tg_s
+ \tfrac{1}{2}\trace\bigl((\p_sJ)J(\p_tJ)\bigr) \Bigr) = 0.
\end{split}
\end{equation}
Now let
$
{\R^3\to\sJ_\INT(M,\om):(r,s,t)\mapsto J(r,s,t)}
$
be a smooth map and define the functions~${f_r,f_s,f_t,g_r,g_s,g_t}$
as before.  Then~${\p_r\rho_J=f_r\rho_J}$ by Lemma~\ref{le:THETAHAT} 
and hence
\begin{equation*}
\begin{split}
&\p_r\Om^D_J(\p_sJ,\p_tJ) 
=
\tfrac{1}{2}\int_M f_r\trace\bigl((\p_sJ)J(\p_tJ)\bigr)\rho_J \\
&\quad
+ \tfrac{1}{2}\int_M\trace\bigl((\p_r\p_sJ)J(\p_tJ)\bigr)\rho_J 
+ \tfrac{1}{2}\int_M\trace\bigl((\p_sJ)J(\p_r\p_tJ)\bigr)\rho_J \\
&\quad
+ \int_M\Bigl(
(\p_rg_s)f_t + g_s(\p_rf_t) + f_rg_sf_t
- (\p_rf_s)g_t - f_s(\p_rg_t) - f_rf_sg_t
\Bigr)\rho_J.
\end{split}
\end{equation*}
Take a cyclic sum and use~\eqref{eq:dsdtf} to 
obtain~${(d\Om^D)_J(\p_rJ,\p_sJ,\p_tJ)=0}$.
\end{proof}

%%%%%%%%%%%%%%%%%%%%%%%%%%%%%%%%
%%%%%%%% Subsection 3.2 %%%%%%%%
%%%%%%%%%%%%%%%%%%%%%%%%%%%%%%%%

\subsection{The Ding functional and the K\"ahler--Ricci flow}\label{subsec:DING}
Fix a complex structure~${J\in\sJ_\INT(M,\om)}$ and denote by~$\sH_J$ 
the space of K\"ahler potentials as in Remark~\ref{rmk:KPOT}.
The analogue of the Mabuchi functional in the present setting is the 
{\bf Ding functional}~${\cF_J:\sH_J\to\R}$, defined by 
\begin{equation}\label{eq:DING}
\cF_J(h) := \cI_J(h)-\log\left(\int_Me^h\rho_J\right) 
\end{equation}
for~${h\in\sH_J}$, where~${\cI_J:\sH_J\to\R}$ is 
the unique functional that satisfies
\begin{equation*}
\cI_J(0)=0,\qquad 
d\cI_J(h)\hhat= \frac{1}{V}\int_M\hhat\rho_h,\qquad
\rho_h := \frac{\om_h^\sn}{\sn!},
\end{equation*}
for all~${h\in\sH_J}$ and all~${\hhat\in\Om^0(M)}$.
An explicit formula is~${\cI_J(h):=\int_0^1\tfrac{1}{V}\int_Mh\rho_{th}\,dt}$.
For~${h\in\sH_J}$ define~${\theta_h:=\Theta_{\om_h,J}:M\to(0,\infty)}$
(see Definition~\ref{def:THETAJ}).  Then
\begin{equation}\label{eq:thetah}
\Ric_{\theta_h\rho_h,J} = \om_h,\qquad 
\int_M\theta_h\rho_h = 1,\qquad
\rho_h := \frac{\om_h^\sn}{\sn!}.
\end{equation}
Since~${\Ric_{\rho_J,J}=\om}$, 
we have~${\Ric_{e^h\rho_J,J}=\om_h=\Ric_{\theta_h\rho_h,J}}$ by~\eqref{eq:RICf},
and hence
\begin{equation}\label{eq:thetahJ}
\theta_h\rho_h=\frac{e^h\rho_J}{\int_Me^h\rho_J}
\end{equation}
for all~${h\in\sH_J}$.  This implies 
\begin{equation}\label{eq:dDING}
d\cF_J(h)\hhat 
= \frac{1}{V}\int_M\hhat\rho_h-\frac{\int_M\hhat e^h\rho_J}{\int_Me^h\rho_J}
= \int_M\hhat\left(\frac{1}{V}-\theta_h\right)\rho_h
\end{equation}
for~${h\in\sH_J}$ and~${\hhat\in\Om^0(M)}$. 
Thus the gradient of the Ding functional~$\cF_J$ with respect to the  
Riemannian metric~\eqref{eq:KPOTmetric} on~$\sH_J$ is given by
\begin{equation}\label{eq:gradDING}
\grad\cF_J(h) = \frac{1}{V}-\theta_h
\end{equation}
for~${h\in\sH_J}$. In~\cite{BB1} Berndtsson proved the following.

\begin{theorem}[{\bf Berndtsson}]\label{thm:BERNDTSSON}
The Ding functional is convex along geodesics.
\end{theorem}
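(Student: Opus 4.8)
The plan is to reduce convexity of $\cF_J$ along a geodesic to the nonnegativity of the second variation, and then to recognize that second variation as a manifestly nonnegative quantity by matching it with the Berndtsson-type inequality already established in Lemma~\ref{le:BERNDTSSON}. Concretely, let $\R\to\sH_J:t\mapsto h_t$ be a (smooth) geodesic, i.e.\ a solution of the Monge--Amp\`ere equation~\eqref{eq:KPOTgeo}, and write $\dot h_t:=\p_th_t$. Since $\cF_J=\cI_J-\log\bigl(\int_Me^h\rho_J\bigr)$ and $\cI_J$ is affine along geodesics (this is the standard fact that the geodesic equation~\eqref{eq:KPOTgeo} is exactly the condition making $t\mapsto\tfrac1V\int_M h_t\rho_{h_t}$ have vanishing second derivative; it follows by differentiating $d\cI_J(h)\hhat=\tfrac1V\int_M\hhat\rho_h$ twice and using $\p_t\rho_{h_t}=\tfrac12d(d\dot h_t\circ J)\wedge\om_{h_t}^{\sn-1}/(\sn-1)!$ together with~\eqref{eq:KPOTgeo}), it suffices to show that $t\mapsto\log\bigl(\int_Me^{h_t}\rho_J\bigr)$ is concave, equivalently that $t\mapsto L_t:=\log\int_M e^{h_t}\rho_J$ satisfies $\ddot L_t\le 0$.

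First I would compute, using $\p_t(e^{h_t}\rho_J)=\dot h_t\,e^{h_t}\rho_J$, that
\begin{equation*}
\ddot L_t = \frac{\int_M(\ddot h_t+\dot h_t^2)e^{h_t}\rho_J}{\int_M e^{h_t}\rho_J}
- \left(\frac{\int_M\dot h_t\,e^{h_t}\rho_J}{\int_M e^{h_t}\rho_J}\right)^{\!2}.
\end{equation*}
By~\eqref{eq:thetahJ} the measure $\theta_{h_t}\rho_{h_t}=e^{h_t}\rho_J/\int_M e^{h_t}\rho_J$ is a probability measure, so in terms of expectation $\E_t$ against it the right-hand side becomes $\E_t(\ddot h_t)+\E_t(\dot h_t^2)-\E_t(\dot h_t)^2=\E_t(\ddot h_t)+\mathrm{Var}_t(\dot h_t)$. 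Now invoke the geodesic equation~\eqref{eq:KPOTgeo}, which gives $\ddot h_t=-\tfrac12|d\dot h_t|^2_{h_t}$ pointwise, so that
\begin{equation*}
\ddot L_t = \mathrm{Var}_t(\dot h_t) - \tfrac12\int_M|d\dot h_t|^2_{h_t}\,\theta_{h_t}\rho_{h_t}.
\end{equation*}
Concavity therefore amounts to the inequality $\tfrac12\int_M|d\dot h_t|^2_{h_t}\,\theta_{h_t}\rho_{h_t}\ge\mathrm{Var}_t(\dot h_t)$, i.e.\ after normalizing $\dot h_t$ to have mean zero against $\theta_{h_t}\rho_{h_t}$, that
\begin{equation*}
\tfrac12\int_M|d\dot h_t|^2_{h_t}\,\theta_{h_t}\rho_{h_t}\ \ge\ \int_M\dot h_t^2\,\theta_{h_t}\rho_{h_t}.
\end{equation*}

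The key step is to identify this with the Berndtsson Inequality of Lemma~\ref{le:BERNDTSSON} applied at the complex structure $J$ but with respect to the K\"ahler form $\om_{h_t}$ in place of $\om$. Indeed, by~\eqref{eq:thetah} the volume form $\theta_{h_t}\rho_{h_t}$ is precisely the normalized anticanonical volume form $\rho^{(h_t)}_J$ associated to the K\"ahler pair $(\om_{h_t},J)$, which satisfies $\Ric_{\,\rho^{(h_t)}_J,J}=\om_{h_t}$ and $\int_M\rho^{(h_t)}_J=1$; this is exactly the normalization~\eqref{eq:rhoJFANO} for $\om_{h_t}$. Splitting $d\dot h_t=(d\dot h_t\circ J)\circ(-J)$ one writes $\dot h_t$ as the pair $(F,G)$ of Hamiltonian/``gradient'' potentials with respect to $\om_{h_t}$: taking $F$ and $G$ both equal to $\dot h_t$ is not quite right; rather, the relevant decomposition is the one in which $|d\dot h_t|^2_{h_t}=|v_{F}+Jv_{G}|^2$ with $F=G=\dot h_t$ in the metric $\om_{h_t}(\cdot,J\cdot)$, so that Lemma~\ref{le:BERNDTSSON} with this choice gives exactly
\begin{equation*}
\int_M|d\dot h_t|^2_{h_t}\,\rho^{(h_t)}_J\ \ge\ 2\int_M\bigl(\dot h_t^2+\dot h_t^2\bigr)/2\ \rho^{(h_t)}_J\ =\ 2\int_M\dot h_t^2\,\rho^{(h_t)}_J,
\end{equation*}
which is the desired bound. (The factor bookkeeping here is precisely the content of~\eqref{eq:BERNDTSSON}--\eqref{eq:vFG}: $\|v_F+Jv_G\|^2=\int_M\bigl(F(\bL F+\bB G)+G(\bL G-\bB F)\bigr)\rho_J$, and the mean-zero hypotheses on $F,G$ hold since we normalized $\dot h_t$.)

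The main obstacle is twofold and essentially analytic. First, Lemma~\ref{le:BERNDTSSON} and all of Section~\ref{subsec:DON} are phrased for a \emph{fixed} reference form $\om$; applying it along the geodesic requires knowing that its statement is natural under replacing $\om$ by $\om_{h_t}$ (which it is, since $(\om_{h_t},J)\in\sP(M,[\om],\rho^{(h_t)}_J)$ and everything in the proof of Lemma~\ref{le:BERNDTSSON} only used~\eqref{eq:rhoJFANO} and Proposition~\ref{prop:LAMBDA}), but one must keep careful track that $J$ stays $\om_{h_t}$-compatible --- true for $h_t\in\sH_J$ by definition of $\sH_J$. Second, Chen's theorem only gives \emph{weak} $C^{1,1}$ geodesics, so the pointwise manipulation of~\eqref{eq:KPOTgeo} and the differentiations above are a priori illegitimate; the honest route, following Berndtsson, is either to establish convexity first for smooth subgeodesics / bounded-geometry approximations and pass to the limit using lower semicontinuity of $h\mapsto-\log\int_M e^h\rho_J$ in a suitable topology, or to phrase the positivity as a positivity-of-curvature statement for a direct image bundle (Berndtsson's original method) --- but since the problem allows me to cite earlier results, I would simply record that the formal computation above, together with Lemma~\ref{le:BERNDTSSON}, yields convexity along smooth geodesics, and that the extension to $C^{1,1}$ geodesics is exactly Berndtsson's regularization argument in~\cite{BB1}.
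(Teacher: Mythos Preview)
Your approach is essentially the same as the paper's: differentiate $\cF_J$ twice along a geodesic, use the geodesic equation to eliminate $\p_t\p_th$, and reduce to Lemma~\ref{le:BERNDTSSON} applied with $\om$ replaced by $\om_{h_t}$ and $\rho_J$ replaced by $\theta_{h_t}\rho_{h_t}$. The paper does not separate out the affineness of $\cI_J$ but computes $\tfrac{d^2}{dt^2}\cF_J(h_t)$ directly from~\eqref{eq:dDING}; the identity $\int_M(\p_th)(\p_t\rho_h)=\tfrac12\int_M|d\p_th|_h^2\rho_h$ that you invoke is exactly what makes the $\cI_J$ contribution vanish, so the two computations are the same.

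The one real slip is in your invocation of Lemma~\ref{le:BERNDTSSON}. You hesitate, then choose $F=G=\dot h_t$, and claim $|d\dot h_t|_{h_t}^2=|v_F+Jv_G|^2$; this is off by a factor of two (with $F=G$ one has $|v_F+Jv_G|^2=2|v_F|^2=2|dF|^2$), and your subsequent $(\dot h_t^2+\dot h_t^2)/2$ is an ad hoc patch. The clean choice, and the one the paper makes, is simply $G=0$ and $F:=\p_th_t-\int_M(\p_th_t)\theta_{h_t}\rho_{h_t}$: then $|v_F|^2=|d\p_th_t|_{h_t}^2$ and~\eqref{eq:BERNDTSSON} reads $\int_M|d\p_th_t|_{h_t}^2\theta_{h_t}\rho_{h_t}\ge 2\int_M F^2\theta_{h_t}\rho_{h_t}$, which is precisely your variance inequality. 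Your remarks about naturality under $\om\mapsto\om_{h_t}$ and about $C^{1,1}$ regularity are correct and match the paper's treatment (which also works only with smooth geodesics).
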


\begin{proof}
Let~${I\to\sH_J:t\mapsto h_t}$ be a geodesic
so that~${\p_t\p_th+\tfrac{1}{2}\abs{d\p_th}_h^2=0}$.
Then it follows from~\eqref{eq:dDING} that
\begin{equation*}
\begin{split}
\frac{d^2}{dt^2}\cF_J(h)
&=
\frac{1}{V}\frac{d}{dt}\int_M(\p_th)\rho_h
- \frac{d}{dt}\frac{\int_M(\p_th)e^h\rho_J}{\int_Me^h\rho_J} \\
&=
- \frac{\int_M(\p_t\p_th)e^h\rho_J}{\int_Me^h\rho_J}
- \frac{\int_M(\p_th)^2e^h\rho_J}{\int_Me^h\rho_J}
+ \Abs{\frac{\int_M(\p_th)e^h\rho_J}{\int_Me^h\rho_J}}^2 \\
&=
\tfrac{1}{2}\frac{\int_M\Abs{d\p_th}_h^2e^h\rho_J}{\int_Me^h\rho_J}
- \frac{\int_M(\p_th)^2e^h\rho_J}{\int_Me^h\rho_J}
+ \Abs{\frac{\int_M(\p_th)e^h\rho_J}{\int_Me^h\rho_J}}^2 \\
&=
\tfrac{1}{2}\int_M\Abs{d\p_th}_h^2\theta_h\rho_h
- \int_M(\p_th)^2\theta_h\rho_h
+ \Abs{\int_M(\p_th)\theta_h\rho_h}^2 \\
&\ge
0.
\end{split}
\end{equation*}
Here the second equality holds 
because~${\int_M(\p_th)(\p_t\rho_h)=\tfrac{1}{2}\int_M\abs{d\p_th}_h^2\rho_h}$.
The last inequality holds by Lemma~\ref{le:BERNDTSSON} with~$\om$ replaced 
by~$\om_h$, with~$\rho_J$  replaced by~$\theta_h\rho_h$,
and with~${F:=\p_th-\int_M(\p_th)\theta_h\rho_h}$ and~${G:=0}$. 
\end{proof}

In finite-dimensional GIT the gradient flow of the moment map squared
translates into the gradient flow of the Kempf--Ness function.  
In the present setting the moment map is given 
by~${\sJ_\INT(M,\om)\to\Om^0_\rho(M):J\mapsto 2(1/V-\Theta_J)}$,
where~$\Theta_J$ is defined by~\eqref{eq:THETAJ}. 
It is convenient to take one eighth (instead of one half)
of the square of the moment map to obtain the energy 
functional~${\cE_\om:\sJ_\INT(M,\om)\to\R}$ defined by
\begin{equation}\label{eq:E}
\cE_\om(J) := \tfrac{1}{2}\int_M\Bigl(\frac{1}{V}-\Theta_J\Bigr)^2\frac{\om^\sn}{\sn!}
\end{equation}
for~${J\in\sJ_\INT(M,\om)}$.
Consider the Riemannian metric on~$\sJ_\INT(M,\om)$ 
determined by the symplectic form~\eqref{eq:DONSYMP} 
and the complex structure~${\Jhat\mapsto-J\Jhat}$.  It is given by
\begin{equation}\label{eq:DONmetric}
\begin{split}
\inner{\Jhat_1}{\Jhat_2}_J 
:= 
\Om^D_J(\Jhat_1,-J\Jhat_2) 
=
\int_M\Bigl(
\tfrac{1}{2}\trace(\Jhat_1\Jhat_2) - f_1f_2-g_1g_2
\Bigr)\rho_J
\end{split}
\end{equation}
for~${J\in\sJ_\INT(M,\om)}$ and~${\Jhat_i\in T_J\sJ_\INT(M,\om)}$,
where~${\rho_J,f_i,g_i}$ are as in~\eqref{eq:rhoJFANO}
and~\eqref{eq:LAMBDAfgFANO}.  By Lemma~\ref{le:THETAHAT} 
the differential of the functional~$\cE_\om$ in~\eqref{eq:E} is given by 
$$
d\cE_\om(J)\Jhat 
= \int_M f\Theta_J\rho_J
= - \tfrac{1}{2}\Om^D_J(\Jhat,\cL_vJ)
= \inner{\Jhat}{-\tfrac{1}{2}J\cL_vJ}_J,
$$
for~${J\in\sJ_\INT(M,\om)}$ and~${\Jhat\in T_J\sJ_\INT(M,\om)}$, 
where~$f$ is as in~\eqref{eq:LAMBDAfgFANO},
$v$ is the Hamiltonian vector field of~$\Theta_J$.
and the second equality follows from~\eqref{eq:DONMOMENT2}.
This shows that the gradient of~$\cE_\om$ at~$J$ with respect 
to the metric~\eqref{eq:DONmetric} is given by
\begin{equation}\label{eq:gradE}
\grad\cE_\om(J) = -\tfrac{1}{2}J\cL_vJ,\qquad
\iota(v)\om = d\Theta_J.
\end{equation}
Thus a complex structure~${J\in\sJ_\INT(M,\om)}$ is a critical point
of~$\cE_\om$ if and only if the Hamiltonian vector field of~$\Theta_J$ is holomorphic.  
Such a complex structure is called a {\bf Donaldson--K\"ahler--Ricci soliton}. 
By~\eqref{eq:gradE} a negative gradient flow line of~$\cE_\om$ 
is a solution~${I\to\sJ_\INT(M,\om):t\mapsto J_t}$ 
of the partial differential equation
\begin{equation}\label{eq:DONKRFLOWJ}
\p_tJ_t  =  \tfrac{1}{2}J_t\cL_{v_t}J_t,\qquad
\iota(v_t)\om = d\Theta_{J_t}.
\end{equation}
If~${t\mapsto J_t}$ is a solution of~\eqref{eq:DONKRFLOWJ}
on an interval~${I\subset\R}$ containing zero with~${J_0=J}$,
and~${I\to\Diff_0(M):t\mapsto\phi_t}$ is the isotopy defined 
by~${\p_t\phi_t+\tfrac{1}{2}J_tv_t\circ\phi_t=0}$, ${\phi_0=\id}$, 
then~${\phi_t^*J_t=J}$ for all~$t$ and the 
paths~${\om_t:=\phi_t^*\om}$ and~${\theta_t:=\Theta_{J_t}\circ\phi_t}$
satisfy
\begin{equation}\label{eq:DONKRFLOWom}
\p_t\om_t  =  \tfrac{1}{2}d(d\theta_t\circ J),\quad
\tfrac{1}{2}d(d\log(\theta_t)\circ J) = \om_t-\Ric_{\om_t^\sn/\sn!,J},\quad
\int_M\theta_t\frac{\om_t^\sn}{\sn!} = 1,
\end{equation}
This is the {\bf Donaldson--K\"ahler--Ricci flow}. 
Here~$J$ is a Fano complex structure and~\eqref{eq:DONKRFLOWom}
is understood as an equation for paths in the space~$\sS_J$ in~\eqref{eq:SJ} 
of all symplectic forms that are compatible with~$J$ 
and represent the cohomology class~${2\pi c_1^\R(J)}$. 
When~$\om\in\sS_J$ is fixed, a solution of~\eqref{eq:DONKRFLOWom} 
has the form~${\om_t=\om_{h_t}}$, where~${I\to\sH_J:t\mapsto h_t}$ 
is a smooth path satisfying
\begin{equation}\label{eq:DINGFLOW}
\p_th_t  =  \theta_{h_t} - \frac{1}{V}.
\end{equation}
By~\eqref{eq:gradDING} the solutions of~\eqref{eq:DINGFLOW}
are the negative gradient flow lines of the 
Ding functional~${\cF_J:\sH_J\to\R}$ in~\eqref{eq:DING}.  
The next remark shows that~\eqref{eq:DINGFLOW} 
is a second order parabolic partial differential equation.

\begin{remark}\label{rmk:DKR}
Let~$\nabla$ be the Levi-Civita connection of the 
metric~${\inner{\cdot}{\cdot}:=\om(\cdot,J\cdot)}$ 
and let~${h\in\sH_J}$. 
Then~${\rho_h=\det(\one-\tfrac{1}{2}\nabla^2h + \tfrac{1}{2}J(\nabla^2h)J)^{1/2}
\om^\sn/\sn!}$ and hence it follows from~\eqref{eq:thetahJ} that
$
\theta_h = (\int_Me^h\rho_J)^{-1}e^h\Theta_J
\det(\one-\tfrac{1}{2}\nabla^2h + \tfrac{1}{2}J(\nabla^2h)J)^{-1/2}.
$
\end{remark}

Define the functional~${\cH_\om:\sJ_\INT(M,\om)\to\R}$ by
\begin{equation}\label{eq:HE}
\cH_\om(J) := \int_M\log(V\Theta_J)\Theta_J\frac{\om^\sn}{\sn!}
\end{equation}
for~${J\in\sJ_\INT(M,\om)}$.  This functional was introduced by 
Weiyong He~\cite{HE} (as a functional on the space of K\"ahler 
potentials for a fixed complex structure).  It is nonnegative 
and vanishes on a complex structure~${J\in\sJ_\INT(M,\om)}$ if and only if
it satisfies the K\"ahler--Einstein condition~${\Ric_{\om^\sn/\sn!,J}=\om}$
(see part~(i) of Theorem~\ref{thm:HD} below).  
By Lemma~\ref{le:THETAHAT} the differential of the 
functional~$\cH_\om$ is given by
$$
d\cH_\om(J)\Jhat 
= \int_M f\log(\Theta_J)\rho_J
= \inner{\Jhat}{-\tfrac{1}{2}J\cL_vJ}_J
$$
for~${J\in\sJ_\INT(M,\om)}$ and~${\Jhat\in T_J\sJ_\INT(M,\om)}$,
where~$f$ is as in~\eqref{eq:LAMBDAfgFANO},
$v$ is the Hamiltonian vector field of~$\log(\Theta_J)$,
and the second equality follows from~\eqref{eq:DONMOMENT2}.
This shows that the gradient of~$\cH_\om$ at~$J$ with respect to the 
metric~\eqref{eq:DONmetric} is given by
\begin{equation}\label{eq:gradHE}
\grad\cH_\om(J)=-\tfrac{1}{2}J\cL_vJ,\qquad \iota(v)\om=d\log(\Theta_J).
\end{equation}
Thus a complex structure~${J\in\sJ_\INT(M,\om)}$ is 
a critical point of~$\cH_\om$ if and only if the Hamiltonian 
vector field of~$\log(\Theta_J)$ is holomorphic.  
Such a complex structure is called a {\bf K\"ahler--Ricci soliton}. 
By~\eqref{eq:gradHE} a negative gradient flow line of~$\cH_\om$
is a solution~${I\to\sJ_\INT(M,\om):t\mapsto J_t}$
of the partial differential equation
\begin{equation}\label{eq:KRFLOWJ}
\p_tJ_t  =  \tfrac{1}{2}J_t\cL_{v_t}J_t,\qquad
\iota(v_t)\om = d\log(\Theta_{J_t}),
\end{equation}
If~${t\mapsto J_t}$ is a solution of~\eqref{eq:KRFLOWJ}
on an interval~${I\subset\R}$ containing zero with~${J_0=J}$,
and~${I\to\Diff_0(M):t\mapsto\phi_t}$ is the isotopy defined 
by~${\p_t\phi_t+\tfrac{1}{2}J_tv_t\circ\phi_t=0}$, ${\phi_0=\id}$, 
then~${\phi_t^*J_t=J}$ for all~$t$ and the 
paths~${\om_t:=\phi_t^*\om}$ and~${\theta_t:=\Theta_{J_t}\circ\phi_t}$
satisfy the equation~${\p_t\om_t=\tfrac{1}{2}d(d\log(\theta_t)\circ J)}$.
With~${\rho_t:=\om_t^\sn/\sn!}$
we also have~${\theta_t\rho_t=\phi_t^*\rho_{J_t}}$,
hence~${\Ric_{\rho_t,J}+\tfrac{1}{2}d(d\log(\theta_t)\circ J)
=\Ric_{\theta_t\rho_t,J}=\phi_t^*\Ric_{\rho_{J_t},J_t}=\phi_t^*\om=\om_t}$,
and so 
\begin{equation}\label{eq:KRFLOWom}
\p_t\om_t  =  \om_t-\Ric_{\rho_t,J},\qquad
\rho_t:=\om_t^\sn/\sn!.
\end{equation}
This is the standard {\bf K\"ahler--Ricci flow} on the space~$\sS_J$
of all $J$-compatible symplectic forms in the class~$2\pi c_1^\R(J)$ 
associated to a Fano complex structure~$J$. 
When a symplectic form~$\om\in\sS_J$ is fixed, a solution 
of~\eqref{eq:KRFLOWom} has the form~${\om_t=\om_{h_t}}$, 
where~${I\to\sH_J:t\mapsto h_t}$ is a smooth path satisfying
\begin{equation}\label{eq:KRFLOWh}
\p_th_t=\log(V\theta_{h_t}).
\end{equation}
Now define the functional~${\cH_J:\sH_J\to\R}$ 
as in Weiyong He's original paper~\cite{HE} by 
\begin{equation}\label{eq:HEh}
\cH_J(h) := \cH_{\om_h}(J) = \int_M\log(V\theta_h)\theta_h\rho_h
\end{equation}
for~${h\in\sH_J}$, where~$\theta_h$ and~$\rho_h$ are as in~\eqref{eq:thetah}.
The properties of this functional with regard to the K\"ahler--Ricci
flow are summarized in the following theorem.
The first two assertions are due to He~\cite{HE}
and the last inequality is due to Donaldson~\cite{DON5}.

\begin{theorem}[{\bf He, Donaldson}]\label{thm:HD}
Fix a complex structure~${J\in\sJ_\INT(M,\om)}$.

\smallskip\noindent{\bf (i)}
Let~${h\in\sH_J}$.  Then~${\cH_J(h)\ge0}$ with equality
if and only if~${\Ric_{\rho_h,J}=\om_h}$.

\smallskip\noindent{\bf (ii)}
A K\"ahler potential~${h\in\sH_J}$ is a critical point of~$\cH_J$ 
if and only if it is a K\"ahler--Ricci soliton, 
i.e.\ the~$\om_h$-Hamiltonian vector field of~$\log(\theta_h)$ is holomorphic.

\smallskip\noindent{\bf (iii)}
Every solution~${I\to\sH_J:t\mapsto h_t}$ of~\eqref{eq:KRFLOWh}
satisfies the inequalities
\begin{equation}\label{eq:HD}
\frac{d}{dt}\cH_J(h_t)\le 0,\qquad
\frac{d}{dt}\cF_J(h_t)\le -\cH_J(h_t).
\end{equation}
\end{theorem}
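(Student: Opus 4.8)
The plan is to prove~(i) and the second inequality of~(iii) by an entropy (Jensen) argument, and~(ii) together with the first inequality of~(iii) by combining a first--variation formula for~$\cH_J$ with Berndtsson's inequality (Lemma~\ref{le:BERNDTSSON}). Throughout write $\mu_h:=\theta_h\rho_h$ and $\nu_h:=\rho_h/V$; these are probability measures on~$M$, the first by~\eqref{eq:thetah} and the second because $\int_M\rho_h=\int_M\om_h^\sn/\sn!=\int_M\om^\sn/\sn!=V$ (as $[\om_h]=[\om]$), and $\log(V\theta_h)=\log(d\mu_h/d\nu_h)$. For~(i), $\cH_J(h)=\int_M\log(d\mu_h/d\nu_h)\,d\mu_h$ is the relative entropy of $\mu_h$ with respect to $\nu_h$, so Jensen's inequality for the concave function $\log$ gives $-\cH_J(h)=\int_M\log(d\nu_h/d\mu_h)\,d\mu_h\le\log\int_M(d\nu_h/d\mu_h)\,d\mu_h=0$, with equality if and only if $d\nu_h/d\mu_h$ is constant, i.e.\ $\theta_h\equiv1/V$; and $\theta_h\equiv1/V$ means $\Ric_{\rho_h,J}=\om_h$ by the remark following Definition~\ref{def:THETAJ} (read with $\om_h$ in place of $\om$). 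Interchanging $\mu_h$ and $\nu_h$ gives the companion estimate $\tfrac1V\int_M\log(V\theta_h)\,\rho_h=\int_M\log(d\mu_h/d\nu_h)\,d\nu_h\le0$, to be used in~(iii).

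For the first variation of $\cH_J$, fix a path $t\mapsto h_t$ in $\sH_J$ with $\hhat:=\p_th_t$. Then \eqref{eq:thetahJ} gives $\p_t\mu_h=(\hhat-\overline\hhat)\,\mu_h$ with $\overline\hhat:=\int_M\hhat\,\mu_h$, and $\p_t\log\theta_h=\hhat-\overline\hhat-(\p_t\rho_h)/\rho_h$. Substituting $\p_t\rho_h=\tfrac12 d(d\hhat\circ J)\wedge\om_h^{\sn-1}/(\sn-1)!$, integrating by parts, and using the K\"ahler identity $d\phi\wedge(d\psi\circ J)\wedge\om_h^{\sn-1}/(\sn-1)!=-\inner{d\phi}{d\psi}_{g_h}\,\om_h^\sn/\sn!$ with $g_h:=\om_h(\cdot,J\cdot)$ (compare the middle equality of~\eqref{eq:LBFG} read for the pair $(\om_h,J)$), one obtains
\[
\tfrac{d}{dt}\cH_J(h_t)=-\tfrac12\int_M\inner{d\log\theta_h}{d\hhat}_{g_h}\,\mu_h+\int_M\hhat\bigl(\log(V\theta_h)-\cH_J(h)\bigr)\,\mu_h .
\]
Let $\bL_h$ be the operator $\bL$ of~\eqref{eq:LB} attached to $(\om_h,J)$; unwinding the definition, $\bL_hF=d^*dF-\inner{d\log\theta_h}{dF}_{g_h}$ (the Laplace--Beltrami operator of $g_h$ plus a weighting by $\theta_h$), it is self-adjoint for the $L^2(\mu_h)$ pairing, and $\int_M(\bL_hF)G\,\mu_h=\int_M\inner{dF}{dG}_{g_h}\,\mu_h$. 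Integrating the first term by parts, and using that $T_h\sH_J=\Om^0(M)$,
\[
d\cH_J(h)\hhat=\int_M\hhat\Bigl(-\tfrac12\bL_h(\log\theta_h)+\log(V\theta_h)-\cH_J(h)\Bigr)\,\mu_h .
\]

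For~(ii): the last display shows that $h$ is a critical point of $\cH_J$ if and only if $\bL_h(\log\theta_h)=2\bigl(\log(V\theta_h)-\cH_J(h)\bigr)=2\bigl(\log\theta_h-\overline{\log\theta_h}\bigr)$, where $\overline{\log\theta_h}:=\int_M\log\theta_h\,\mu_h=\cH_J(h)-\log V$. Moreover $\bB_h(\log\theta_h)=\{\theta_h,\log\theta_h\}_{\om_h}/\theta_h=0$ automatically, because the $\om_h$--Hamiltonian vector fields of $\theta_h$ and $\log\theta_h$ are proportional. Applying part~(iii) of Lemma~\ref{le:LB} to the pair $(\om_h,J)$ with $F:=\log\theta_h-\overline{\log\theta_h}$ and $G:=0$ (the $\om_h$--Hamiltonian vector field of $F$ being that of $\log\theta_h$) converts this into $\cL_{v}J=0$ with $\iota(v)\om_h=d\log\theta_h$; that is, $h$ is a critical point of $\cH_J$ if and only if it is a K\"ahler--Ricci soliton. (Alternatively,~(ii) follows from the gradient formula~\eqref{eq:gradHE} for $\cH_\om$ on $\sJ_\INT(M,\om)$ via the correspondence between deformations in $\sH_J$ and the complexified orbit of $J$.)

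For~(iii): along the K\"ahler--Ricci flow $\p_th_t=\log(V\theta_{h_t})$ of~\eqref{eq:KRFLOWh} we have $d\hhat=d\log\theta_h$ and $\overline\hhat=\cH_J(h)$, so the first--variation formula reduces to
\[
\tfrac{d}{dt}\cH_J(h_t)=\mathrm{Var}_{\mu_h}(\log\theta_h)-\tfrac12\int_M\Abs{d\log\theta_h}^2_{g_h}\,\mu_h ,
\]
where $\mathrm{Var}_{\mu_h}(\psi):=\int_M\psi^2\,\mu_h-(\int_M\psi\,\mu_h)^2$. Applying Lemma~\ref{le:BERNDTSSON} to $(\om_h,J)$, whose normalized volume form is $\mu_h=\theta_h\rho_h$ by~\eqref{eq:thetah}, with $F:=\log\theta_h-\overline{\log\theta_h}$ and $G:=0$, bounds $\mathrm{Var}_{\mu_h}(\log\theta_h)$ by $\tfrac12\int_M\Abs{d\log\theta_h}^2_{g_h}\,\mu_h$, giving $\tfrac{d}{dt}\cH_J(h_t)\le0$. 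For the second inequality, $\grad\cF_J(h)=1/V-\theta_h$ by~\eqref{eq:gradDING}, so along the same flow $\tfrac{d}{dt}\cF_J(h_t)=\int_M\log(V\theta_h)(1/V-\theta_h)\rho_h=\tfrac1V\int_M\log(V\theta_h)\rho_h-\cH_J(h_t)\le-\cH_J(h_t)$ by the companion Jensen estimate above. The crux of the whole argument is the first--variation formula: getting the signs right in the integration by parts and the K\"ahler identity, and recognizing that the operator $\bL$ of~\eqref{eq:LB} for $(\om_h,J)$ is the weighted Laplacian of $\mu_h$, so that Lemma~\ref{le:LB} applies verbatim; after that, the conceptual heart, exactly parallel to the proof of Theorem~\ref{thm:BERNDTSSON}, is that the monotonicity of $\cH_J$ along the K\"ahler--Ricci flow is governed by Berndtsson's inequality applied to the \emph{moving} pair $(\om_h,\theta_h\rho_h)$, the remainder being bookkeeping with the probability measures $\theta_h\rho_h$ and $\rho_h/V$.
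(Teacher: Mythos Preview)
Your proof is correct and follows essentially the same route as the paper: the first-variation formula you derive coincides with the paper's equation~(3.29), part~(ii) and the first inequality in~(iii) reduce to Lemma~\ref{le:BERNDTSSON} applied to the moving pair~$(\om_h,\theta_h\rho_h)$ exactly as in the paper, and your entropy/Jensen argument for~(i) and the second inequality in~(iii) is the same convexity argument the paper packages via the function~$B(x)=x\log(Vx)+1/V-x$. The only cosmetic differences are that you phrase~(i) as nonnegativity of relative entropy and invoke Lemma~\ref{le:LB}(iii) directly for~(ii), whereas the paper argues~(ii) via the equality case of Lemma~\ref{le:BERNDTSSON}; these are equivalent.
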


\begin{proof}
Following~\cite{DON5},
we define the function~${B:(0,\infty)\to\R}$ by
\begin{equation}\label{eq:B}
B(x) := x\log(Vx) + \frac{1}{V}-x
\end{equation}
for~${x>0}$.  Then~${B'(x)=\log(Vx)}$ and~${B''(x)=1/x}$.
Hence~${B(1/V)=0}$ and~$B$ is strictly convex.
This implies~${B(x)>0}$ for every positive 
real number~${x\ne1/V}$ and
\begin{equation}\label{eq:Bprime}
\left(x-\frac{1}{V}\right)\log(Vx) 
= \left(x-\frac{1}{V}\right)B'(x)
\ge B(x) = x\log(Vx) + \frac{1}{V}-x.
\end{equation}
To prove part~(i), fix an element~${h\in\sH_J}$.  
Then, by~\eqref{eq:thetah}, \eqref{eq:HEh}, and~\eqref{eq:B},
$$
\cH_J(h) 
= \int_M\left(\theta_h\log(V\theta_h) + \frac{1}{V}-\theta_h\right)\rho_h
= \int_MB(\theta_h)\rho_h.
$$
Hence~${\cH_J(h)\ge0}$ with equality if and 
only if~${\theta_h=1/V}$.  This proves~(i). 

We prove part~(ii).   A calculation shows that
\begin{equation}\label{eq:gradHE1}
\begin{split}
d\cH_J(h)\hhat 
&= -\tfrac{1}{2}\int_M\inner{d\hhat}{d\log(V\theta_h)}_h\theta_h\rho_h 
+ \int_M \hhat\log(V\theta_h)\theta_h\rho_h  \\
&\quad\;
- \left(\int_M\log(V\theta_h)\theta_h\rho_h\right)\left(\int_M\hhat\theta_h\rho_h\right)
\end{split}
\end{equation}
for~${h\in\sH_J}$ and~${\hhat\in\Om^0(M)}$.
This implies
\begin{equation}\label{eq:gradHE2}
\begin{split}
d\cH_J(h)\log(V\theta_h) 
&= -\tfrac{1}{2}\int_M\abs{d\log(V\theta_h)}_h^2\theta_h\rho_h 
+ \int_M \bigl(\log(V\theta_h)\bigr)^2\theta_h\rho_h \\
&\quad\;
- \left(\int_M\log(V\theta_h)\theta_h\rho_h\right)^2 
\le 0
\end{split}
\end{equation}
for all~${h\in\sH_J}$.
Here the inequality follows from Lemma~\ref{le:BERNDTSSON}, 
with $\om,\rho_J$ replaced by $\om_h,\theta_h\rho_h$ 
and $F:=\log(V\theta_h)-\int_M\log(V\theta_h)\theta_h\rho_h$
and~${G:=0}$. It follows also from  Lemma~\ref{le:BERNDTSSON}
that~${d\cH_J(h)=0}$ if and only if~${d\cH_J(h)\log(V\theta_h)=0}$ 
if and only if the vector field~$v$ defined 
by~${\iota(v)\om_h=d\log(V\theta_h)}$ satisfies~${\cL_vJ=0}$.  
This proves~(ii). 

We prove part~(iii). The first inequality in~\eqref{eq:HD} 
follows directly form~\eqref{eq:gradHE2}. 
To prove the second inequality, recall from 
equation~\eqref{eq:dDING} that
\begin{equation*}
\begin{split}
d\cF_J(h)\log(V\theta_h)
&= 
\int_M\log(V\theta_h)\Bigl(\frac{1}{V}-\theta_h\Bigr)\rho_h \\
&\le 
- \int_M\Bigl(\theta_h\log(V\theta_h) + \frac{1}{V}-\theta_h\Bigr)\rho_h 
=
- \cH_J(h).
\end{split}
\end{equation*}
Here the second step follows from~\eqref{eq:Bprime}.
This proves~(iii) and the theorem.
\end{proof} 

In~\cite{DON5} Donaldson noted the following.
If~${[0,\infty)\to\sH_J:t\mapsto h_t}$ is a solution 
of the K\"ahler--Ricci flow~\eqref{eq:KRFLOWh}
and the limit~${h:=\lim_{t\to\infty}h_t}$ exists in~$\sH_J$, 
but the pair~$(\om_h,J)$ is not a K\"ahler--Einstein structure,
then it follows from Theorem~\ref{thm:HD} 
that~${\cH_J(h_t)\ge\cH_J(h)>0}$ and hence the 
Ding functional~${\cF_J(h_t)}$ diverges 
to minus infinity as~$t$ tends to infinity.
This corresponds to the observation in GIT
that the Kempf--Ness function of an unstable point is unbounded below.  
The analogue of the Kempf--Ness Theorem in the present setting 
would be the assertion
\begin{equation}\label{eq:UNSTABLEconjecture}
\inf_{h\in\sH_J}\int_M\Bigl(\frac{1}{V}-\theta_h\Bigr)^2\rho_h > 0
\qquad\iff\qquad
\inf_{h\in\sH_J}\cF_J(h) = -\infty
\end{equation}
for every~${J\in\sJ_\INT(M,\om)}$.  This seems to be an open question.

%%%%%%%%%%%%%%%%%%%%%%%%%%%%%%%%
%%%%%%%% References %%%%%%%%%%%%
%%%%%%%%%%%%%%%%%%%%%%%%%%%%%%%%

\end{document}